\documentclass[11pt,thmsa]{amsart}

\usepackage[all]{xy}
\usepackage{color}
\usepackage{mathtools} 
\usepackage{fancybox}
\usepackage{amssymb}
\usepackage{hyperref}
\usepackage{comment}
\usepackage{graphicx}
\usepackage{faktor}

 \setlength{\topmargin}{1.2cm} \setlength{\parindent}{10pt}
\setlength{\textwidth}{16cm} \setlength{\textheight}{22.5cm}
\setlength{\hoffset}{-1.5cm} \setlength{\voffset}{-2cm}

 \newtheorem{theorem}{Theorem}[section]
 \newtheorem*{thm*}{Theorem} 
 \newtheorem{lemma}[theorem]{Lemma}
 \newtheorem{proposition}[theorem]{Proposition}
 
 \newtheorem{corollary}[theorem]{Corollary}
 \newtheorem{definition}[theorem]{Definition}
  \theoremstyle{definition}
 \newtheorem{example}[theorem]{Example}
 \newtheorem{remark}[theorem]{Remark}

\newcommand{\Presym}{\mathsf{Pre}\textrm{-}\mathsf{Sym}}

\newcommand{\hor}{\mathrm{hor}}

\newcommand{\Fol}{K}
\newcommand{\Foliations}{\mathsf{Fol}}
\newcommand{\Hom}{\mathrm{Hom}}

\newcommand{\id}{\mathrm{id}}

\newcommand{\Lie}{\mathcal{L}}

\newcommand{\Diffop}{\mathsf{DO}}
\newcommand{\smooth}{\mathcal{C}}

\newcommand{\Koszul}{\mathcal{K}}
\newcommand{\Coder}{\mathrm{Coder}}

\newcommand{\MC}{\mathsf{MC}}


\newcommand{\cL}{\mathcal{L}}

\newcommand{\ZZ}{\ensuremath{\mathbb Z}}
\newcommand{\RR}{\ensuremath{\mathbb R}}

\newcommand{\pd}[1]{\frac{\partial}{\partial #1}} 

\newcommand{\lione}{$L_{\infty}[1]$-algebra }
\newcommand{\liones}{$L_{\infty}[1]$-algebras }

\begin{document}

\author{Florian Sch\"atz}
\email{florian.schaetz@gmail.com}
\address{University of Luxembourg, Mathematics Research Unit, 
Maison du Nombre
6, avenue de la Fonte 
L-4364 Esch-sur-Alzette, Luxembourg.
}
  
 \author{Marco Zambon}
\email{marco.zambon@kuleuven.be}
\address{KU Leuven, Department of Mathematics, Celestijnenlaan 200B box 2400, BE-3001 Leuven, Belgium.}

\subjclass[2010]{Primary: 
17B70,  	
53D05, 
58H15.  	
Secondary: 
53C12, 
53D17. 
\\
Keywords: pre-symplectic geometry, deformation complex, 
$L_{\infty}$-algebra, 
foliation.
}

 \title[Deformations of pre-symplectic structures and the Koszul $L_{\infty}$ algebra]{Deformations of pre-symplectic structures \\
 and the Koszul $L_{\infty}$-algebra}

 \begin{abstract}
We study the deformation theory of pre-symplectic structures, i.e.~closed two-forms of fixed rank.
The main result is a parametrization of nearby deformations of a given pre-symplectic structure in terms
of an $L_\infty$-algebra, which we call the {\em Koszul $L_\infty$-algebra}.
This $L_\infty$-algebra is a cousin of the Koszul dg Lie algebra
associated to a Poisson manifold. In addition, we show that a quotient of the Koszul $L_{\infty}$-algebra
is isomorphic to the $L_\infty$-algebra which controls the deformations of the underlying characteristic foliation. Finally, we show that the infinitesimal deformations of pre-symplectic structures and of foliations are both obstructed.
\end{abstract}

\maketitle

\tableofcontents

\section*{\textsf{Introduction}}

This paper studies the deformation theory of closed 2-forms of a fixed rank.
Recall that a $2$-form $\eta$ on a manifold $M$ is called {\em pre-symplectic} if
\begin{itemize}
\item[i)] it is closed, and
\item[ii)] the kernel of the vector bundle map $\eta^\sharp: TM \to T^*M,\, v\mapsto \eta(v,\cdot)$ has constant rank. 
\end{itemize}
Pre-symplectic structures arise naturally on certain submanifolds of symplectic manifolds (as in the Dirac theory of constraints in mechanics) and as pullbacks of symplectic forms along submersions.
We denote  by  $\Presym^k(M)$ the set of all pre-symplectic structures on $M$, whose rank equals $k$.
We informally think of $\Presym^k(M)$ as some kind of space. Our main goal in this paper is to construct local parametrizations (`charts') for this space.

It is evident that in order to achieve this, we will need to take care simultaneously of the closedness condition and the rank condition. It is not hard to deal with each of these two conditions separately:
\begin{itemize}
\item[i)] closedness is the condition of lying in the kernel of a linear differential operator, the de Rham differential $d:\Omega^2(M)\to \Omega^3(M)$,
\item[ii)] fibrewise, the rank condition cuts out a (non-linear) fibre-subbundle of $\wedge^2 T^*M \to M$, for which one can construct explicit trivializations.
\end{itemize}
However, there is a certain tension between the two conditions:
The de Rham differential is in no obvious way compatible with the rank condition. Similarly, if one uses in ii) a naive trivialization for the rank condition, one loses control over the closedness condition.
\\
 
We therefore need a construction which addresses the closedness condition and the rank condition on equal footing. Given a pre-symplectic structure $\eta$, 
we make a choice of a complement of $\ker(\eta^\sharp)$ in $TM$, which determines
 a bivector field $Z$ (obtained by inverting the restriction of $\eta$  to the chosen complement). Using $Z$ we define a smooth, fiber-preserving but non-linear map
 $$F\colon (\text{neighborhood of the zero section in $\wedge^2 T^*M$}) \to   \wedge^2 T^*M.$$
The crucial fact about the transformation $F$ 
is that it 
 behaves well {\em both} with respect to the closedness and the rank condition!
Indeed:
\begin{itemize}
\item[i)] 
if the restriction of a (sufficiently small) 
$2$-form $\beta$ to the kernel of $\eta$ vanishes, then
$\eta + F(\beta)$ has the same rank as $\eta$, and vice versa {(see Theorem. \ref{theorem: almost Dirac structures})}.
\item[ii)]
the closedness of $F(\beta)$ is equivalent to  
an explicit equation for $\beta$ of the form
\begin{align}\label{MC} d\beta + \frac{1}{2}[\beta,\beta]_Z+ \frac{1}{6}[\beta,\beta,\beta]_Z =0.
\end{align}
\end{itemize}
Here $d$ is the de Rham differential, $[\cdot,\cdot]_Z$ and $[\cdot,\cdot,\cdot]_Z$ is a bilinear, respectively trilinear, map from $\Omega(M)$ to itself.
As the notation indicates, $[\cdot,\cdot]_Z$
and $[\cdot,\cdot,\cdot]_Z$ depend on $Z$.
In a more technical language, $d$, $[\cdot,\cdot]_Z$ and $[\cdot,\cdot,\cdot]_Z$
equip $\Omega(M)$ with the structure of an $L_\infty$-algebra (after a degree shift), Equation \eqref{MC}
is known as the corresponding {\em Maurer-Cartan equation}, and its solutions are called {\em Maurer-Cartan elements}. Our proofs rely on the construction of $L_{\infty}$-algebras out of commutative $BV_{\infty}$-algebras \cite{Koszul,Kravchenko}, and the map $F$ can be interpreted as the map on Maurer-Cartan elements induced by an $L_{\infty}$-isomorphism to the de Rham complex.

To complete our construction, we prove -- see Theorem \ref{theorem: construction - Koszul L-infty} -- that 
$$\Omega_\hor(M):=\{\beta \in \Omega(M) \, | \, \iota_X\beta=0 \text{ for all }X\in \ker(\eta^\sharp)\}$$
is closed under $d$, $[\cdot,\cdot]_Z$ and $[\cdot,\cdot,\cdot]_Z$, and hence inherits the structure of an $L_\infty$-algebra. We refer to it as the {\em Koszul $L_\infty$-algebra of $(M,\eta)$}, since $[\cdot,\cdot]_Z$ is defined by the same formula as the classical Koszul bracket for a Poisson bivector field, c.f.~\cite{Koszul}.
The corresponding Maurer-Cartan equation \eqref{MC} incorporates both the closedness {\em and} the rank condition which define $\Presym^k(M)$. Summing up our discussion,   
our main result is the construction of the following map (see Theorem \ref{theorem: main result}):
\begin{thm*}
There is an injective map
\begin{align}\label{intro: parametrization} \boxed{\{\textrm{small Maurer-Cartan elements of } (\Omega_\hor(M),d,[\cdot,\cdot]_Z,[\cdot,\cdot,\cdot]_Z)\} \;\;{\longrightarrow}\;\; \Presym^k(M)}
\end{align}
which bijects onto a ($\mathcal{C}^0$-)neighborhood of $\eta$.
\end{thm*}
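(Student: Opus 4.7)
The approach is to take the map to be $\Phi \colon \beta \mapsto \eta + F(\beta)$, restricted to small Maurer-Cartan elements $\beta$ of degree $2$ in $\Omega_\hor(M)$. With $F$ already in hand, the theorem reduces to packaging the two key properties i) and ii) of $F$ (the rank and closedness characterizations stated just before the theorem) together with the fiberwise invertibility of $F$.

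First I would check that $\Phi$ lands in $\Presym^k(M)$: if $\beta \in \Omega_\hor^2(M)$ is a small Maurer-Cartan element, then property i) yields that $\eta + F(\beta)$ has the same rank $k$ as $\eta$ (since $\iota_X\beta = 0$ for $X \in \ker(\eta^\sharp)$), while property ii) translates the Maurer-Cartan equation \eqref{MC} into closedness of $F(\beta)$, hence of $\eta + F(\beta)$. Note that the Koszul brackets only need to be defined on $\Omega_\hor(M)$, which is exactly what Theorem \ref{theorem: construction - Koszul L-infty} supplies.

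Next I would address injectivity and surjectivity onto a $C^0$-neighborhood of $\eta$ simultaneously, by exploiting that $F$ is smooth and fiber-preserving with $F(0)=0$ and with invertible fiberwise differential at the zero section (this should be clear from the explicit construction of $F$ out of $Z$). The fiberwise inverse function theorem, applied uniformly over $M$, produces a smooth fiber-preserving inverse $F^{-1}$ on some $C^0$-neighborhood of the zero section. Injectivity of $\Phi$ is then immediate. For surjectivity, given $\eta' \in \Presym^k(M)$ sufficiently $C^0$-close to $\eta$, set $\beta := F^{-1}(\eta' - \eta)$. The converse implications of properties i) and ii) then do the work: the rank-$k$ condition on $\eta'$ forces $\beta \in \Omega_\hor^2(M)$, and the closedness of $\eta'$ forces $\beta$ to satisfy \eqref{MC}. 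Hence $\beta$ is in the domain of $\Phi$ and $\Phi(\beta) = \eta'$.

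The only real technical point is verifying that the neighborhoods can be chosen compatibly: one needs a uniform-in-$x \in M$ radius on which $F_x$ is a diffeomorphism, and a $C^0$-neighborhood of $\eta$ small enough that $\eta'-\eta$ lands fiberwise inside $F(\text{domain})$ and the resulting $\beta$ is "small" in whatever sense is required to invoke the Maurer-Cartan characterization in ii). For a given compact $M$ this is routine; on a non-compact $M$ it is a standard argument exploiting that $F$ is smooth, fiber-preserving, and the identity to first order at the zero section. Everything else is a direct transcription of the two characterizing properties of $F$.
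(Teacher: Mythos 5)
Your proposal is correct and follows essentially the same route as the paper: the map is $\exp_\eta(\beta)=\eta+F(\beta)$, the rank condition on $\exp_\eta(\beta)$ is equivalent to horizontality of $\beta$ (Theorem~\ref{theorem: almost Dirac structures}), and closedness of $F(\beta)$ is equivalent to the Maurer--Cartan equation (Corollary~\ref{corollary: MC for bivector fields}). The only deviation is that you invoke a fibrewise inverse function theorem to invert $F$, whereas the paper has an explicit global inverse $\alpha\mapsto \alpha^\sharp(\mathrm{id}-Z^\sharp\alpha^\sharp)^{-1}$ defined on all of $\mathcal{I}_{-Z}$ (Remark~\ref{rem:alfabeta}(ii)), which makes the uniformity concerns you raise for non-compact $M$ unnecessary.
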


Pre-symplectic structures have a rich geometry. One interesting feature is that each pre-symplectic structure $\eta$ induces a foliation on the underlying manifold, called the {\em characteristic foliation} of $\eta$, which is given by the kernel of $\eta$. This yields a map
$$\rho: \Presym^k(M) \to \{\textrm{foliations on } M\}.$$
We use the local parametrization \eqref{intro: parametrization}
of pre-symplectic structures of fixed rank
 to construct an algebraic model of this map. To be more precise, we show that a certain quotient of the Koszul $L_\infty$-algebra of $(M,\eta)$
is isomorphic to the $L_\infty$-algebra whose Maurer-Cartan equation encodes the deformations of the characteristic foliation, see Theorem \ref{theorem: characteristic foliation}.  

We finish the paper addressing the obstructedness problem: can every first order deformation be extended to a smooth curve of deformations? We show that the answer to this question is negative, both for deformations of pre-symplectic structures and of foliations. We do so by exhibiting counterexamples on the 4-dimensional torus, using the explicit form of the  $L_\infty$-algebras constructed previously.\\

Let us mention three important issues which we plan to address in follow-up papers:

\begin{itemize}

\item {\em Dirac-geometric interpretation and independence from auxiliary data:} 
Recall that a Dirac structure is a  maximal isotropic involutive subbundle of the Courant algebroid $TM\oplus T^*M$. Since $\eta$ is in particular a closed 2-form,   $\mathrm{graph}(\eta)$ is a Dirac structure. It turns out {\cite{SZDirac}} 
that parametrizing the deformations of $\mathrm{graph}(\eta)$  with the help of a suitable complement in $TM\oplus T^*M$ provides a geometric interpretation of the map \eqref{intro: parametrization}.

Recall also that the Koszul $L_{\infty}$-algebra of $(M,\eta)$  depends on a choice of auxiliary data. {The Dirac-geometric interpretation allows to apply the results of 
the recent preprint \cite{GMS} by M.~Gualtieri, M.~Matviichuk and G.~Scott to show that 
the Koszul $L_{\infty}$-algebra of $(M,\eta)$ is independent of these data up to $L_{\infty}$-isomorphisms.}

\item {\em Geometric vs. algebraic equivalences}: Isotopies act via pullback on the space $\Presym^k(M)$ of all pre-symplectic structures of fixed rank. This gives rise to an equivalence relation on $\Presym^k(M)$. On the other hand, the Koszul $L_\infty$-algebra of $(M,\eta)$ comes with the notion of gauge-equivalence on the set of Maurer-Cartan elements. We will show {\cite{SZpreequi}}  
 that these two notions of equivalence correspond to each other under the map \eqref{intro: parametrization}, assuming $M$ is compact.
 We resolved a problem of this type in our previous work \cite{Schaetz-Zambon2}.

\item {\em Relation to coisotropic submanifolds}: One of our motivations to develop the deformation theory of pre-symplectic structures is the relationship to coisotropic submanifolds, see \cite{OP,Schaetz-Zambon,Schaetz-Zambon2,LOTVcoiso}.
A submanifold $M$ of a symplectic manifold $(X,\omega)$ is {\em coisotropic} if the symplectic orthogonal to $TM$ inside $TX\vert_M$ is contained in $TM$. This condition guarantees that $\omega\vert_M$ is pre-symplectic and, in fact, every pre-symplectic form can be obtained this way. This hints at a tight relationship between the deformation theory of coisotropic submanifolds and the deformation theory of pre-symplectic structures.
We will show {\cite{SZcoisopre}} 
that this is indeed the case. On the geometric level, we extend previous work by Ruan, cf.~\cite{Ruan}. On the algebraic level, we prove that the Koszul $L_\infty$-algebra
of $(M,\eta)$ is homotopy equivalent -- or quasi-isomorphic -- to the {$L_{\infty}$-algebra} 
 that controls the simultaneous deformations of pairs consisting of a symplectic structure and a coisotropic submanifold, see \cite{Bandiera-Manetti,FZgeo}.
\end{itemize}

\bigskip
\noindent{\bf{Structure of the paper:}}

Section \ref{section: pre-symplectic structures} sets the stage
by introducing the relevant deformation problem, and by establishing basic geometric and algebraic facts related to pre-symplectic structures. In Section \ref{subsection: symplectic} we discuss the toy-example of symplectic structures in the framework which we generalize to arbitrary pre-symplectic structures in the rest of the paper.

Section \ref{section: linear algebra} is devoted to the rank condition. We discuss in detail special submanifold charts for the space of skew-symmetric bilinear forms of a fixed rank.

Section \ref{section: Koszul Lie 3-algebra} is the heart of the paper. Here, we construct the Koszul $L_\infty$-algebra associated to a pre-symplectic structure $\eta$, and prove that its Maurer-Cartan equation encodes the deformations of $\eta$ inside the space of pre-symplectic structures of fixed rank. We located the proofs concerning $L_\infty$-algebras in Subsection \ref{subsection: proofs L-infty}.
Subsection \ref{section: examples} illustrates our approach with examples.

Section \ref{section: characteristic foliation} links the deformation theory of pre-symplectic structures to the deformation theory of their characteristic foliations.
On the algebraic level, this relationship is quite straightforward: we show that the $L_\infty$-algebra, which encodes the deformations of the foliation underlying $\eta$, arises as a quotient of the Koszul $L_\infty$-algebra by an $L_\infty$-ideal.

Section \ref{section: obstructions} shows that the infinitesimal deformations of   pre-symplectic structures as well as foliations are obstructed, using properties of the $L_\infty$-algebras obtained in Section \ref{section: Koszul Lie 3-algebra} and  \ref{section: characteristic foliation}.
\\

\paragraph{\bf Acknowledgements:}
We thank Marco Gualtieri and Mykola Matviichuck for discussions about
their work \cite{GMS} (joint with G. Scott) and Donald Youmans for correcting an inaccuracy in a previous version of this paper.
F.S. thanks Ruggero Bandiera for his generous help concerning Koszul brackets, and especially for the discussions leading to the proof of Proposition \ref{proposition: extended Koszul}.
M.Z. acknowledges partial support by Pesquisador Visitante Especial grant 88881.030367/2013-01 (CAPES/Brazil), by IAP Dygest (Belgium), 
the long term structural funding -- Methusalem grant of the Flemish Government.  
\\

\paragraph{\bf Convention:}
Let $V$ be a $\mathbb{Z}$-graded vector space, that is we have a decomposition $V=\bigoplus_{k\in \mathbb{Z}}V_k$.
For $r\in \mathbb{Z}$, we denote by $V[r]$ the $\mathbb{Z}$-graded vector space whose component in degree $k$ is $V_{k+r}$.

For $V$ a graded vector space, we denote by $\odot^nV$ the space of coinvariants of $V^{\otimes n}$ under the action by the symmetric group $S_n$ via Koszul signs.

 \addtocontents{toc}{\protect\mbox{}\protect}
\section{\textsf{Pre-symplectic structures and their deformations}}\label{section: pre-symplectic structures}

\subsection{Deformations of pre-symplectic structures}
\label{subsection: pre-symplectic structures}

In this section we set up the deformation problem which we study in this article. Throughout the discussion, $M$ denotes a smooth manifold.

\begin{definition}\label{definition: pre-symplectic}
A $2$-form $\eta$ on $M$ is called \emph{pre-symplectic} if 
\begin{enumerate}
\item $\eta$ is closed and
\item the vector bundle map $\eta^\sharp: TM \to T^*M, v\mapsto \iota_v\eta = \eta(v,\cdot)$ has constant rank.
\end{enumerate}
In the following, we refer to the rank of $\eta^\sharp$ as the rank of $\eta$.
\end{definition}

\begin{definition}
A \emph{pre-symplectic} manifold is a pair $(M,\eta)$ consisting of a manifold $M$
and a pre-symplectic form $\eta$ on $M$.

We denote the space of all pre-symplectic structures on $M$ by 
$\Presym(M)$ and the space of all pre-symplectic structures of rank $k$
by $\Presym^k(M)$.
\end{definition}

\begin{remark}
Given a pre-symplectic manifold $(M,\eta)$, the fibrewise kernels of 
$\eta^\sharp$ assemble into a vector subbundle of $TM$, which we denote by $K$:
$$K:=\ker(\eta^\sharp).$$
Since $\eta$ is closed, $K$ is involutive, hence gives
rise to a foliation of $M$.
Recall that associated to any foliation, one has the corresponding foliated
de~Rham complex, which we denote by
$\Omega(K):=(\Gamma(\wedge K^*),d_\Fol).$
Restriction of ordinary differential forms on $M$ to sections of $K$ yields a surjective chain map
$r: \Omega(M) \to \Omega(K)$.
We denote the kernel of $r$ by $\Omega_\hor(M)$. It coincides with the multiplicative ideal
in $\Omega(M)$ generated by all the section of the annihilator $K^\circ \subset T^*M$ of $K$. We have the following exact sequence of complexes
\begin{equation}\label{eq:ses}
\xymatrix{
0 \ar[r] &\Omega_\hor(M) \ar[r] & \Omega(M) \ar[r]^r & \Omega(K)\ar[r] & 0
}.
\end{equation}
We denote the cohomology of $\Omega_\hor(M)$ by $H_\hor(M)$, and the cohomology of $\Omega(K)$
by $H(K)$.
\end{remark}

We next compute the formal tangent space to
$\Presym^k(M)$ at a pre-symplectic form $\eta$.
 
\begin{lemma}\label{lemma: tangent space to pre-sym}
Let $(\eta_t)_{t\in [0,\varepsilon)}$ be a one-parameter family of pre-symplectic forms on $M$
of fixed rank $k$ with $\eta_0=\eta$.
Then $\frac{d}{dt}\vert_{t=0}\eta_t$ is a closed $2$-form which lies in 
$\Omega^2_\hor(M)$.
\end{lemma}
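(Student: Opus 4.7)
The plan is to verify the two conditions making up the conclusion separately for $\dot\eta:=\frac{d}{dt}\big|_{t=0}\eta_t$---closedness and horizontality. Closedness is automatic: since $d\eta_t=0$ for all $t$ and the de Rham differential commutes with the parameter-derivative, one immediately obtains $d\dot\eta=\frac{d}{dt}\big|_{t=0}d\eta_t=0$.

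The horizontality statement $\dot\eta\in\Omega_{\hor}^2(M)$ amounts, via the exact sequence \eqref{eq:ses}, to $r(\dot\eta)=0$, equivalently $\dot\eta(X,Y)=0$ for all $X,Y\in\Gamma(K)$. This is where the constant rank hypothesis is essential. By the constant rank theorem applied to the smoothly $t$-varying bundle map $\eta_t^\sharp\colon TM\to T^*M$, the fibrewise kernels $K_t:=\ker(\eta_t^\sharp)$ assemble into a smoothly varying family of vector subbundles of $TM$. Hence any $X_0,Y_0\in\Gamma(K)$ can be extended locally on $M$ to smooth $t$-families $X_t,Y_t\in\Gamma(K_t)$ with $X_t|_{t=0}=X_0$ and $Y_t|_{t=0}=Y_0$. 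The identity $\eta_t(X_t,Y_t)=0$ then holds for all $t$ near $0$; differentiating at $t=0$ by the Leibniz rule gives
$$\dot\eta(X_0,Y_0)+\eta(\dot X_0,Y_0)+\eta(X_0,\dot Y_0)=0.$$
The last two terms vanish because $X_0,Y_0\in\ker(\eta^\sharp)=K$, and therefore $\dot\eta(X_0,Y_0)=0$, as required.

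The only nontrivial point in this argument is the smoothness of the family $K_t$, which is a standard consequence of the constant rank condition (via local normal forms, or equivalently, smooth row reduction of the $t$-dependent matrix representing $\eta_t^\sharp$). Once this is in hand, the remainder of the proof reduces to a direct Leibniz computation, and I anticipate no real obstacles.
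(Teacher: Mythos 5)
Your proof is correct and follows essentially the same route as the paper's: closedness by commuting $d$ with the $t$-derivative, and horizontality by extending sections of $K$ to a smooth $t$-family of sections of the bundles $K_t$ and differentiating $\eta_t(X_t,Y_t)=0$ via the Leibniz rule. The only difference is that you spell out slightly more explicitly why the $K_t$ form a smooth family, which the paper simply asserts.
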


\begin{proof}
That $\frac{d}{dt}\vert_{t=0}\eta_t$ is closed is straight-forward.

Concerning the second claim, let $X$ and $Y$ be two arbitrary smooth sections
of $K$. Since $(K_t=\ker(\eta_t^\sharp))_{t\in [0,\varepsilon)}$ assemble into a smooth vector bundle $\tilde{K}$ over
$M\times [0,\varepsilon)$, we can find smooth extensions of $X$ and $Y$ to
sections of $\tilde{K}$. Denote these extensions by $X_t$ and $Y_t$.
By definition, we have
$$ 0=\eta_t(X_t,Y_t)$$
for all $t\in [0,\varepsilon)$. Differentiation with respect to $t$ yields
$$  0= \left(\frac{d}{dt}\vert_{t=0}\eta_t\right)(X,Y) + \eta(\frac{d}{dt}\vert_{t=0}X_t, Y) + \eta(X,\frac{d}{dt}\vert_{t=0}Y_t) = \left(\frac{d}{dt}\vert_{t=0}\eta_t\right)(X,Y).$$
\end{proof}

\begin{remark}\label{rem:tpres}
Lemma \ref{lemma: tangent space to pre-sym} tells us that
we have the following identification of the formal tangent space to $\Presym^k(M)$ at $\eta$:
$$ T_\eta \left(\Presym^k(M)\right)  \cong \{\alpha \in \Omega^2(M) \textrm{ closed}, r(\alpha)=0\} = Z^2(\Omega_\hor(M)).$$
\end{remark}

\subsection{Bivector fields induced by pre-symplectic forms}\label{section: bivec}

For later use,  
we present basic results on the geometry of bivector fields that arise from  pre-symplectic forms, after one makes a choice of complement to the kernel. For any bivector field $Z$, we denote by $\sharp \colon T^*M\to TM$ the map $\xi\mapsto \iota_{\xi}Z=Z(\xi,\cdot)$. The Koszul bracket of $1$-forms  associated to $Z$ is 
\begin{equation}
\label{eq:KoszulBracket1f}
[\xi_1,\xi_2]_{Z}=\iota_{\sharp \xi_1}d\xi_2-\iota_{\sharp \xi_2}d\xi_1+d\langle Z, \xi_1\wedge \xi_2\rangle. 
\end{equation}
In case $Z$ is Poisson, $\sharp$ and $[\cdot,\cdot]_Z$ make $T^*M$ into a Lie algebroid. In general, there is an induced bracket on smooth functions given by $\{f,g\}_Z=(\sharp df)(g)$.
In the following, we will make repeated use of the fact that the Koszul bracket has the following derivation property:
$$[\xi_1, f\xi_2]_Z= f[\xi_1,\xi_2]_Z + Z(\xi_1,df)\xi_2 $$
where $\xi_1$, $\xi_2\in \Omega^1(M)$ and $f\in \smooth^\infty(M)$.

 We start with a lemma about general bivector fields, which in the Poisson case reduces to the fact that $\sharp$ is bracket-preserving:
\begin{lemma}\label{lem:bivector}
For any bivector field $Z$ on $M$, and for all $\xi_1,\xi_2\in \Omega^1(M)$ we have
\begin{equation}\label{eq:sharpnothom}
[\sharp \xi_1,\sharp \xi_2]=\sharp [\xi_1,\xi_2]_{Z}-\frac{1}{2}\iota_{\xi_2}\iota_{\xi_1}[Z,Z].
\end{equation}
 \end{lemma}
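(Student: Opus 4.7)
The key structural observation is that both sides of \eqref{eq:sharpnothom} are $\mathcal{C}^\infty(M)$-bilinear in $(\xi_1,\xi_2)$, so the identity is tensorial. To check this, I would substitute $\xi_2 \mapsto f\xi_2$ with $f\in\mathcal{C}^\infty(M)$. Using the Leibniz rule for the Lie bracket of vector fields together with the $\mathcal{C}^\infty(M)$-linearity of $\sharp$, the left-hand side acquires the extra term $(\sharp\xi_1)(f)\sharp\xi_2 = Z(\xi_1,df)\sharp\xi_2$. On the right-hand side, the derivation property of the Koszul bracket recorded in the text produces the same extra contribution via $\sharp[\xi_1,f\xi_2]_Z$, whereas $\iota_{f\xi_2}\iota_{\xi_1}[Z,Z]=f\iota_{\xi_2}\iota_{\xi_1}[Z,Z]$ is manifestly $\mathcal{C}^\infty(M)$-linear. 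The Leibniz corrections match, and an analogous check covers the first argument.

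By tensoriality it suffices to verify \eqref{eq:sharpnothom} for $\xi_i = df_i$ with $f_i \in \mathcal{C}^\infty(M)$. Substituting into \eqref{eq:KoszulBracket1f} annihilates the two $\iota_{\sharp\xi_i}d\xi_j$ terms, so $[df_1,df_2]_Z = d\{f_1,f_2\}_Z$ and hence $\sharp[df_1,df_2]_Z = X_{\{f_1,f_2\}_Z}$, where $X_h := \sharp dh$. Pairing both sides of \eqref{eq:sharpnothom} against a third exact form $df_3$ (via the duality of vector fields and $1$-forms) reduces the statement to an equality of the shape
\[
\{f_1,\{f_2,f_3\}\}_Z - \{f_2,\{f_1,f_3\}\}_Z - \{\{f_1,f_2\}_Z,f_3\}_Z \;=\; -\tfrac{1}{2}[Z,Z](df_1,df_2,df_3).
\]
After regrouping via the skew-symmetry of $\{\cdot,\cdot\}_Z$, the left-hand side is (up to a universal sign) the cyclic sum $\sum_{\mathrm{cyc}}\{f_1,\{f_2,f_3\}\}_Z$, and the identity follows from the classical characterization of the Schouten--Nijenhuis square $[Z,Z]$ as a scalar multiple of the Jacobiator of the bracket induced by $Z$ on functions. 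This last fact can be taken as known or proved directly from the graded Leibniz rule defining the Schouten bracket.

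The main obstacle is purely book-keeping: tracking signs and the factor $\tfrac12$ in the final step. As a sanity check, when $Z$ is Poisson we have $[Z,Z]=0$ and \eqref{eq:sharpnothom} recovers the familiar statement that $\sharp\colon(\Omega^1(M),[\cdot,\cdot]_Z)\to(\mathrm{vector\ fields}(M),[\cdot,\cdot])$ is a Lie algebra homomorphism.
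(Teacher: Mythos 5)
Your proposal is correct and follows essentially the same route as the paper: reduce to exact $1$-forms via the derivation property of the Koszul bracket (you just make the tensoriality check explicit where the paper leaves it implicit), then pair against $df_3$ and identify the discrepancy of the two sides with the Jacobiator of $\{\cdot,\cdot\}_Z$, which is computed by $\tfrac{1}{2}[Z,Z]$.
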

\begin{proof}
By the derivation property of the Koszul bracket, we may assume that $\xi_i$ is exact for $i=1,2$.
We have $\langle [\sharp df_1,\sharp df_2],df_3\rangle=
 [\sharp df_1,\sharp df_2](f_3)=\{f_1,\{f_2,f_3\}\}-\{f_2,\{f_1,f_3\}\}$,
 using in the first equality the definition of the Lie bracket as a commutator. Further
 $\langle \sharp[df_1,df_2]_{Z},df_3\rangle= 
  \langle \sharp d\{f_1,f_2\},df_3\rangle=
  \{\{f_1,f_2\},f_3\}$. Now use the well-known fact that $\frac{1}{2}[Z,Z]$ applied to $df_1\wedge df_2\wedge df_3$ equals the Jacobiator  $\{\{f_1,f_2\},f_3\}\}+c.p.$
\end{proof}

{\begin{remark}\label{rem:nokkk}
Let $Z$ be a constant rank bivector field, and denote by $G$ the image of $\sharp$. Since $G^{\circ}$ is the kernel of $\sharp$, it is straightforward to check that $[\xi_1,\xi_2]_{Z}=0$ for all $\xi_1,\xi_2\in\Gamma(K^*)$. Lemma \ref{lem:bivector} immediately implies that, for any splitting $TM=K\oplus G$, we have
$[Z,Z]\in \Gamma(\wedge^3 G)\oplus \Gamma(\wedge^2 G\otimes K)$.
\end{remark} 
}

Let $\eta\in \Omega^2(M)$ is be a pre-symplectic structure on $M$, and denote its kernel by $K$.
Let us fix a complementary subbundle $G$, so $TM=K\oplus G$. Define $Z$ to be the bivector field on $M$ determined by
$ Z^{\sharp} = -(\eta\vert_G^\sharp)^{-1}$. Clearly $Z$ is a constant rank bivector field, and the image of $ Z^{\sharp}$ is $G$.\\

Together with  Remark \ref{rem:nokkk}, the following Lemma implies that  $[Z,Z]\in \Gamma(\wedge^2G\otimes K)$.
\begin{lemma}\label{lem:noggg}
$[Z,Z]$ has no component in $\wedge^3 G$. \end{lemma}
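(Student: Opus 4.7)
The plan is to use Lemma \ref{lem:bivector} together with the closedness of $\eta$ to kill the $\wedge^3 G$-component of $[Z,Z]$. By Remark \ref{rem:nokkk}, I may decompose $[Z,Z] = A_0 + A_1$ with $A_0 \in \Gamma(\wedge^3 G)$ and $A_1 \in \Gamma(\wedge^2 G \otimes K)$, so the goal reduces to showing $\iota_{\xi_2}\iota_{\xi_1}A_0 = 0$ for all $\xi_1, \xi_2 \in \Gamma(K^\circ)$. A quick case analysis shows that $\iota_{\xi_2}\iota_{\xi_1}A_0 \in \Gamma(G)$ while $\iota_{\xi_2}\iota_{\xi_1}A_1 \in \Gamma(K)$, since covectors in $K^\circ$ annihilate $K$-factors and pair nontrivially only with $G$-factors.

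Setting $v_i := \sharp \xi_i \in \Gamma(G)$, I will compute the $G$-component of $[v_1, v_2]$ in two ways. First, Lemma \ref{lem:bivector} gives
$$[v_1, v_2] = \sharp[\xi_1, \xi_2]_Z - \tfrac{1}{2}\iota_{\xi_2}\iota_{\xi_1}[Z,Z],$$
whose $G$-part equals $\sharp[\xi_1, \xi_2]_Z - \tfrac{1}{2}\iota_{\xi_2}\iota_{\xi_1}A_0$ by the previous observation. Second, the defining relation $Z^{\sharp} = -(\eta\vert_G^\sharp)^{-1}$ yields the identity $\iota_{v_i}\eta = -\xi_i$, and a short Cartan-calculus computation exploiting $d\eta = 0$ should give
$$\iota_{[v_1, v_2]}\eta = \mathcal{L}_{v_1}\iota_{v_2}\eta - \iota_{v_2}\mathcal{L}_{v_1}\eta = -[\xi_1, \xi_2]_Z,$$
where the identity $\xi_2(v_1) = \langle Z, \xi_1 \wedge \xi_2\rangle$ is used to collapse the remaining exact terms into the Koszul bracket. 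Since $\eta\vert_G^\sharp\colon G \to K^\circ$ is an isomorphism whose inverse is $-\sharp$ on $K^\circ$, and since $\iota_{(\cdot)}\eta$ kills $K$, this forces the $G$-component of $[v_1, v_2]$ to equal $\sharp[\xi_1, \xi_2]_Z$.

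Comparing the two expressions for this $G$-component then yields $\iota_{\xi_2}\iota_{\xi_1}A_0 = 0$ for all $\xi_1, \xi_2 \in \Gamma(K^\circ)$, and nondegeneracy of the pairing between $K^\circ$ and $G$ forces $A_0 = 0$. I expect the main technical hurdle to be the Cartan computation above: this is precisely the step where $d\eta = 0$ is essential, via the intermediate identity $\mathcal{L}_{v_i}\eta = -d\xi_i$. Without closedness, an extra term involving $d\eta$ would survive on the right-hand side, the identification $[v_1,v_2]_G = \sharp[\xi_1,\xi_2]_Z$ would fail, and the comparison with Lemma \ref{lem:bivector} would no longer constrain $A_0$.
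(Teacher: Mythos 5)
Your argument is correct, and it takes a genuinely different route from the paper. The paper proves the lemma by local pre-symplectic reduction: working on a neighbourhood where the leaf space $M/K$ exists, it produces the reduced symplectic form $\Omega$ with $p^*\Omega=\eta$, observes that $Z$ projects to the Poisson bivector $\Pi=-(\Omega^\sharp)^{-1}$, and concludes that $[Z,Z]$ projects onto $[\Pi,\Pi]=0$, which kills exactly the $\wedge^3 G$-component. You instead stay global on $M$ and compare two computations of $\mathrm{pr}_G([\sharp\xi_1,\sharp\xi_2])$: one via Lemma \ref{lem:bivector} together with the splitting $[Z,Z]=A_0+A_1$ from Remark \ref{rem:nokkk}, the other via the Cartan identity $\iota_{[v_1,v_2]}\eta=[\mathcal{L}_{v_1},\iota_{v_2}]\eta$ and $\iota_{v_i}\eta=-\xi_i$, which (using $d\eta=0$) collapses to $-[\xi_1,\xi_2]_Z$ by the very formula \eqref{eq:KoszulBracket1f}; the discrepancy is $\tfrac12\iota_{\xi_2}\iota_{\xi_1}A_0$, which must therefore vanish, and nondegeneracy of the pairing between $K^\circ$ and $G$ gives $A_0=0$. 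All the intermediate identities check out, and there is no circularity: you use only Lemma \ref{lem:bivector} and Remark \ref{rem:nokkk}, not Lemma \ref{lem:morG*} (which the paper deduces \emph{from} this lemma) — indeed your Cartan computation reproves Lemma \ref{lem:morG*} and Lemma \ref{lem:g*g*g*} along the way. What each approach buys: the paper's reduction argument is shorter and conceptually transparent but requires passing to a local quotient; yours is coordinate-free and global, isolates precisely where closedness of $\eta$ enters (via $\mathcal{L}_{v_i}\eta=-d\xi_i$), and delivers the adjacent lemmas as byproducts.
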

\begin{proof}
Working locally, we may assume that we have a surjective submersion $p\colon M\to M/K$ where $M/K$ is the quotient of $M$ by the foliation integrating $K$. By pre-symplectic reduction, there is a unique symplectic form $\Omega$ on $M/K$ such that $$p^*\Omega=\eta.$$ Denote by $\Pi$ the Poisson bivector field on $M/K$ determined by $ \Pi^{\sharp} = -(\Omega^\sharp)^{-1}$. Under the decomposition $TM=K\oplus G$, the only component of $\eta$ is $\eta|_{\wedge^2G}\in \Gamma(\wedge^2 G^*)$, whose negative inverse is $Z\in \Gamma(\wedge^2G)$. By the above equation, $Z$ projects to $\Pi$ under $p$. Consequently, the trivector
field $[Z,Z]$ projects onto $[\Pi,\Pi]=0$, finishing the proof.
\end{proof}

We wish to understand the Lie bracket of vector fields $ Z^{\sharp} \xi$ where $\xi$ is a $1$-form.
Clearly  $K^*=G^{\circ}$ is the kernel of $ Z^{\sharp}$. Hence it is sufficient to assume that $\xi$ be a section of $G^*$.
\begin{lemma}\label{lem:morG*}
 For all $\xi_1,\xi_2\in \Gamma(G^*)$, Equation \eqref{eq:sharpnothom}
expresses $[ Z^{\sharp} \xi_1, Z^{\sharp} \xi_2]$ in terms of the decomposition $TM=G\oplus K$. In particular,
$$Z^{\sharp}[\xi_1,\xi_2]_Z=\mathrm{pr}_G([ Z^{\sharp} \xi_1, Z^{\sharp} \xi_2]).$$
\end{lemma}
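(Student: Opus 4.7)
The plan is to simply apply Lemma \ref{lem:bivector} to $\xi_1,\xi_2\in \Gamma(G^*)$ and observe that the right-hand side of \eqref{eq:sharpnothom} is already written in the form (component in $G$) + (component in $K$).

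First, I would argue that $\sharp[\xi_1,\xi_2]_Z \in \Gamma(G)$, which is immediate since the image of $\sharp = Z^\sharp$ is by construction $G$. So the first term on the right-hand side of \eqref{eq:sharpnothom} is a section of $G$.

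Next, I would argue that $\iota_{\xi_2}\iota_{\xi_1}[Z,Z] \in \Gamma(K)$. By Remark \ref{rem:nokkk}, $[Z,Z] \in \Gamma(\wedge^3 G)\oplus \Gamma(\wedge^2 G\otimes K)$, and by Lemma \ref{lem:noggg} the $\wedge^3 G$-component vanishes, so $[Z,Z] \in \Gamma(\wedge^2 G\otimes K)$. Since $\xi_1,\xi_2\in \Gamma(G^*) = \Gamma(K^\circ)$ annihilate $K$, contracting them with $[Z,Z]$ pairs them with the two $G$-slots, leaving a section of $K$.

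Thus in the decomposition $TM = G\oplus K$, Equation \eqref{eq:sharpnothom} reads
\[
[Z^\sharp \xi_1, Z^\sharp \xi_2] = \underbrace{\sharp[\xi_1,\xi_2]_Z}_{\in\, \Gamma(G)} \; + \; \underbrace{\bigl(-\tfrac{1}{2}\iota_{\xi_2}\iota_{\xi_1}[Z,Z]\bigr)}_{\in\, \Gamma(K)},
\]
so taking the $G$-component yields $Z^\sharp[\xi_1,\xi_2]_Z = \mathrm{pr}_G([Z^\sharp \xi_1, Z^\sharp \xi_2])$, as claimed. There is no genuine obstacle here: the entire content is repackaging Lemma \ref{lem:bivector} in light of the rank constraints already established in Remark \ref{rem:nokkk} and Lemma \ref{lem:noggg}.
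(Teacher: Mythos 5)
Your proof is correct and follows essentially the same route as the paper: both arguments note that the first term of \eqref{eq:sharpnothom} lies in $G=\mathrm{image}(Z^\sharp)$ and that, since $[Z,Z]\in\Gamma(\wedge^2 G\otimes K)$ by Remark \ref{rem:nokkk} and Lemma \ref{lem:noggg}, the contraction term lies in $K$. Your write-up merely spells out the contraction step a bit more explicitly than the paper does.
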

\begin{proof}
The first term on the right-hand side of Equation \eqref{eq:sharpnothom} lies in $G=\mathrm{image}( Z^{\sharp})$.
Since $[Z,Z]\in \Gamma(\wedge^3 TM)$ has no component in $\wedge^3 G$  by Lemma \ref{lem:noggg}, the last term on the right-hand side of Equation \eqref{eq:sharpnothom} lies in $K$.
\end{proof}

We finish  with:
\begin{lemma}\label{lem:g*g*g*}
$\Gamma(G^*)$ is closed under the Koszul bracket.
\end{lemma}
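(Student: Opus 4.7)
The plan is to prove $\iota_X [\xi_1,\xi_2]_Z = 0$ for every $X \in \Gamma(K)$ and $\xi_1,\xi_2 \in \Gamma(G^*)$, which suffices since $\Gamma(G^*) = \Gamma(K^\circ)$.

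The first step is to rewrite the bracket using Cartan calculus. Combining Cartan's magic formula with $\langle Z,\xi_1 \wedge \xi_2\rangle = \iota_{\sharp \xi_1}\xi_2$, one obtains
\[
[\xi_1,\xi_2]_Z \;=\; \mathcal{L}_{\sharp \xi_1}\xi_2 \;-\; \iota_{\sharp \xi_2}\,d\xi_1.
\]
Applying $\iota_X$ and using the standard identities $[\iota_X,\mathcal{L}_Y] = \iota_{[X,Y]}$ and $\iota_X d\xi_i = \mathcal{L}_X\xi_i$ (the latter because $\iota_X\xi_i = 0$), together with $\xi_i(X) = 0$, collapses the two pieces to $\xi_2([X,\sharp \xi_1])$ and $-(\mathcal{L}_X \xi_1)(\sharp \xi_2)$ respectively.

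The geometric heart of the proof, and the step I expect to require the most care, is the identity
\[
\mathrm{pr}_G[X,\sharp \xi_1] \;=\; \sharp(\mathcal{L}_X \xi_1).
\]
To establish it, I would use that $\eta$ is closed and $\iota_X \eta = 0$, so Cartan's formula gives $\mathcal{L}_X\eta = 0$; this in turn yields $\eta^\sharp[X,Y] = \mathcal{L}_X(\eta^\sharp Y)$ for every vector field $Y$. Taking $Y = \sharp \xi_1 \in \Gamma(G)$ and invoking the defining relation $\eta^\sharp \circ Z^\sharp = -\mathrm{id}$ on $\Gamma(G^*)$ gives $\eta^\sharp[X,\sharp \xi_1] = -\mathcal{L}_X\xi_1$. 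Since $\eta^\sharp$ vanishes on $K$ and restricts to an isomorphism $G \to G^*$ with inverse $-Z^\sharp$, the identity follows, provided $\mathcal{L}_X \xi_1 \in \Gamma(K^\circ)$; this last fact is immediate from involutivity of $K$ together with $\xi_1|_K = 0$.

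To conclude, using $\xi_2 \in \Gamma(K^\circ)$ one has $\xi_2([X,\sharp \xi_1]) = \xi_2(\mathrm{pr}_G[X,\sharp \xi_1]) = \xi_2(\sharp \mathcal{L}_X\xi_1)$. On the other hand, the antisymmetry of $Z$ gives the general identity $\alpha(\sharp \beta) = -\beta(\sharp \alpha)$ for $1$-forms $\alpha,\beta$, so $-(\mathcal{L}_X\xi_1)(\sharp \xi_2) = \xi_2(\sharp \mathcal{L}_X\xi_1)$ as well. Subtracting shows $\iota_X[\xi_1,\xi_2]_Z = 0$, as required.
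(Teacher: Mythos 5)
Your proof is correct, but it takes a genuinely different route from the paper's. The paper argues locally: it invokes the (local) leaf space $p\colon M \to M/K$ and the reduced symplectic form, takes the frame $d(p^*y_i)$ of $K^\circ$ consisting of exact forms (the reduction to a frame being licensed by the derivation property of the Koszul bracket), and observes that for exact arguments the bracket collapses to $d\langle Z, d(p^*y_i)\wedge d(p^*y_j)\rangle = dp^*\langle \Pi, y_i\wedge y_j\rangle \in \Gamma(K^\circ)$. You instead dualize the claim to $\iota_X[\xi_1,\xi_2]_Z=0$ for $X\in\Gamma(K)$ and run a coordinate-free Cartan-calculus argument whose engine is the identity $\mathcal{L}_X\eta=0$ (the infinitesimal version of the statement that $\eta$ is basic along $K$), leading to the equivariance formula $\mathrm{pr}_G[X,Z^\sharp\xi]=Z^\sharp(\mathcal{L}_X\xi)$; I checked the sign bookkeeping in your contraction computation and in the final cancellation, and it is consistent. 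The paper's route is shorter because it recycles the submersion picture already set up for Lemma \ref{lem:noggg}; yours is intrinsic, avoids passing to a local quotient, and the intermediate identity $\mathrm{pr}_G[X,Z^\sharp\xi]=Z^\sharp(\mathcal{L}_X\xi)$ is a nice companion to Lemma \ref{lem:morG*} (which treats $\mathrm{pr}_G[Z^\sharp\xi_1,Z^\sharp\xi_2]$) and is of independent interest.
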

\begin{proof}
It suffices to show that the Koszul bracket of any elements from a frame for $G^*=K^\circ$ lies again in $\Gamma(K^\circ)$. We use the same notation as in the proof of Lemma \ref{lem:noggg}.
If we pick a system of coordinates $y_1,\dots,y_r$ on $M/K$ (we work locally), the $1$-forms
$d(p^*y_1),\dots, d(p^*y_r)$ constitute a frame of $K^\circ$.
We have
$$ [d(p^*y_i),d(p^*y_j)]_{Z}= d(\langle Z,d(p^*y_i)\wedge d(p^*y_j)\rangle)= dp^*(\langle\Pi,y_i\wedge y_j\rangle)$$
and this is clearly an element of $\Gamma(K^\circ)$.
\end{proof}

 \subsection{The Koszul bracket and deformations of symplectic structures}\label{subsection: symplectic}

In this subsection 
we describe how the deformations of symplectic structures, once the Poisson geometry point of view is taken, can be described by means of the Koszul bracket.
 This approach will be generalized to arbitrary pre-symplectic structures in Section \ref{section: Koszul Lie 3-algebra}, relying on some linear algebra developed in Section \ref{section: linear algebra},
see in particular Remark \ref{rem:symexp}. Of course, the nearby deformations of a symplectic structure $\omega$ can also be described as $\omega+\alpha$ for $\alpha$  small 2-forms satisfying $d\alpha=0$, but this straightforward description does not extend to the pre-symplectic case.

Let $\pi$ be a Poisson bivector field on $M$.
There is a unique extension of the Koszul bracket $[\cdot,\cdot]_\pi$ -- defined on $1$-forms by formula \eqref{eq:KoszulBracket1f} -- to all differential forms such that the extension is graded skew-symmetric, the Leibniz rule for $d$ is satisfied, and  the derivation property with respect to the wedge product is satisfied 
(see Remark \ref{rem:Kos2der} for the precise formuale).
 
 We recall the following algebraic facts:
\begin{lemma}\label{lem:strict} Let $\pi$ be any Poisson structure on a manifold $M$.
\begin{itemize}
\item [(a)]
The following is a strict morphism of dg Lie algebras:
$$I:=\wedge\pi^{\sharp}\colon (\Omega(M),d,[\cdot,\cdot]_{\pi})\to (\mathfrak{X}^\mathrm{multi}(M),-[\pi,\cdot],[\cdot,\cdot]).$$
\item [(b)]The target dg Lie algebra $\mathfrak{X}^\mathrm{multi}(M)$  governs the deformations of the Poisson structure $\pi$:  Poisson structures nearby $\pi$ are given by bivector fields $\pi-\tilde{\pi}$, where $\tilde{\pi}$ satisfies the Maurer-Cartan equation
$$ -[\pi,\tilde{\pi}] + \frac{1}{2}[\tilde{\pi},\tilde{\pi}]=0.$$
\end{itemize}
\end{lemma}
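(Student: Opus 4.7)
The plan for part (a) is to verify separately that $I := \wedge \pi^\sharp$ is a chain map and that it preserves brackets. The key structural observation is that $I$ is a morphism of graded commutative algebras, and that both $d$ on $\Omega(M)$ and $-[\pi,\cdot]$ on $\mathfrak{X}^{\mathrm{multi}}(M)$ are graded derivations of the respective wedge products --- for the latter, this follows from the graded Leibniz rule for the Schouten bracket, using that $\pi$ has shifted degree one. Consequently, each of the two identities to be proved is an equality of graded derivations of the wedge algebra, so it suffices to test it on a set of algebra generators. I would take $\Omega^0(M)$ together with exact one-forms $df$ as generators.

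For the chain map identity $I \circ d = -[\pi,\cdot] \circ I$ on these generators: on a function $f$ it reduces to $I(df) = \pi^\sharp df = -[\pi, f]$, i.e., the standard identification of Hamiltonian vector fields; on an exact one-form, it becomes $I(d(df)) = 0 = -[\pi, \pi^\sharp df] = -\mathcal{L}_{X_f}\pi$, which holds because Hamiltonian vector fields preserve $\pi$. For bracket preservation, pairs of functions give $0 = 0$ for degree reasons; on a pair of one-forms $\xi_1, \xi_2$, the identity is precisely Lemma \ref{lem:bivector} applied with $Z = \pi$, where the correction term vanishes since $[\pi,\pi]=0$; mixed cases then follow from the derivation rules for both brackets with respect to the wedge product, together with the already established chain map property.

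Part (b) is a one-line computation: expanding $[\pi - \tilde\pi, \pi - \tilde\pi]$ by bilinearity and using $[\pi, \pi] = 0$ together with the fact that the Schouten bracket of two bivector fields is graded symmetric (so $[\pi, \tilde\pi] = [\tilde\pi, \pi]$), one obtains $2\bigl(-[\pi, \tilde\pi] + \tfrac{1}{2}[\tilde\pi, \tilde\pi]\bigr)$, whose vanishing is exactly the Maurer--Cartan equation stated in the lemma. The main obstacle I anticipate is not conceptual but notational: keeping a consistent convention for the grading shift and for the signs in the various graded Leibniz rules, so that the reduction-to-generators argument in part (a) produces identities with the correct signs. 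Beyond this bookkeeping, the only genuinely non-trivial input is Lemma \ref{lem:bivector}, which has already been proved.
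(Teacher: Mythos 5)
Your argument is correct, but it takes a genuinely different route from the paper. The paper's proof of (a) is a two-line appeal to Lie algebroid theory: for a Poisson manifold, $\pi^\sharp\colon T^*M\to TM$ is a morphism from the cotangent Lie algebroid to the tangent Lie algebroid, so the induced map on exterior algebras automatically intertwines the Lie algebroid differentials (giving $I\circ d = -[\pi,\cdot]\circ I$ after accounting for $(\pi^\sharp)^*=-\pi^\sharp$) and preserves the Schouten-type brackets. Your proof replaces this black box by a direct verification: since $d$, $[\pi,\cdot]$, and both brackets are (bi)derivations of the wedge products and $I$ is an algebra morphism, the identities reduce to local algebra generators, where they become the identification of Hamiltonian vector fields, the invariance $\mathcal{L}_{X_f}\pi=0$, and Lemma \ref{lem:bivector} with the $[Z,Z]$-term killed by the Poisson condition. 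What the paper's route buys is brevity and conceptual packaging; what yours buys is self-containedness (no need to know that the cotangent algebroid anchor is an algebroid morphism --- indeed your computation essentially reproves that fact) and a visible role for Lemma \ref{lem:bivector}, which the paper proves but then does not reuse here. Part (b) is the same one-line expansion in both.

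One small point to tighten: among pairs of generators, the mixed case of a function against an exact one-form is not disposed of by the derivation rules alone (a function is a generator, not a product), and it is genuinely needed, since a general one-form is locally $\sum f_i\,dg_i$ and the biderivation reduction produces terms of the form $[\,\cdot\,,f_i]$. Fortunately it is a one-line check: $[df,g]_\pi=\iota_{\pi^\sharp df}\,dg=\{f,g\}$ matches $[\pi^\sharp df,\,g]=X_f(g)$. With that case stated explicitly, and with the locality of all operators invoked to justify working with local generators, your reduction-to-generators argument is complete.
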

\begin{proof}
(a) Recall that the anchor of the cotangent Lie algebroid $\pi^{\sharp}\colon T^*M\to TM$ is a Lie algebroid morphism. Hence the pullback $\wedge(\pi^{\sharp})^*\colon \Omega(M)\to \mathfrak{X}^{\mathrm{multi}}(M)$ relates the Lie algebroid differential $d$ (the de~Rham differential) to $d_{\pi}:=[\pi,\cdot]$. Since $(\pi^{\sharp})^*=-\pi^{\sharp}$, it follows that $\wedge\pi^{\sharp}$ maps   $d$  to $-d_{\pi}$. Further, since  $\pi^{\sharp}$ is a Lie algebroid morphism, $\wedge\pi^{\sharp}$ preserves (Schouten) brackets.

(b) This follows from $[\pi-\tilde{\pi},\pi-\tilde{\pi}]= -2[\pi, \tilde{\pi}]+[\tilde{\pi},\tilde{\pi}]$.
\end{proof}

 Suppose that $\pi$ is invertible, and denote by $\omega$ the corresponding symplectic structure, determined by $-\omega^{\sharp}=(\pi^{\sharp})^{-1}$. Denote by $\mathcal{I}_\pi$ the tubular neighborhood of $M\subset \wedge^2 T^*M$ consisting of those bilinear forms $\beta$ such that $\mathrm{id}_{TM}+ \pi^\sharp\beta^\sharp$
is invertible. The following lemma takes the point of view of Poisson geometry to describe the symplectic structures nearby $\omega$, in the sense that instead of deforming $\omega$ directly, it deforms $\pi$. Diagrammatically:
$$
\xymatrix{
\{\text{symplectic forms near $\omega$}\} \ar@{<->}[r]^{\text{inversion}}& \{\text{Poisson structures near $\pi$}\} \ar@{<->}[d]\\
\{\text{small $\beta$ s.t. $ d\beta +\frac{1}{2}[\beta,\beta]_\pi=0$}\}
\ar@{<->}[r]^{I}&
\{\text{small $\tilde{\pi}$ s.t. $-[\pi, \tilde{\pi}]+\frac{1}{2}[\tilde{\pi},\tilde{\pi}]
=0$}\}
 \\
}
$$

\begin{lemma}\label{lem:phibinv}
Let $\omega$ be a symplectic structure on $M$ with corresponding Poisson structure $\pi$.
There is a bijection between
\begin{itemize}
\item $2$-forms $\beta \in \Gamma(\mathcal{I}_\pi)$ such that the equation $ d\beta +\frac{1}{2}[\beta,\beta]_\pi=0$ holds.
\item symplectic forms nearby $\omega$ (in the $\mathcal{C}^0$ sense).
\end{itemize}

The bijection maps $\beta$ to the symplectic form with sharp map $\omega^\sharp + \beta^\sharp(\mathrm{id}_{TM}+\pi^\sharp\beta^\sharp)^{-1}$.
\end{lemma}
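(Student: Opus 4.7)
The plan is to view the bijection as a composition of three elementary bijections, all organized by the Poisson viewpoint of Lemma \ref{lem:strict}: pass from $\beta$ to a bivector via $I:=\wedge^2\pi^\sharp$, from that bivector to a nearby Poisson structure by the translation $\pi\mapsto \pi-\tilde\pi$, and from a nearby Poisson structure to a nearby symplectic structure by inversion. Since $\pi$ is nondegenerate, the bundle map $\pi^\sharp$ is an isomorphism, hence so is $I\colon \Omega^2(M)\to \mathfrak{X}^2(M)$. Lemma \ref{lem:strict}(a) then guarantees that $\beta$ satisfies $d\beta+\tfrac12[\beta,\beta]_\pi=0$ if and only if $\tilde\pi:=I(\beta)$ satisfies $-[\pi,\tilde\pi]+\tfrac12[\tilde\pi,\tilde\pi]=0$, which by Lemma \ref{lem:strict}(b) is the same as $\pi-\tilde\pi$ being a Poisson bivector. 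For $\tilde\pi$ sufficiently $\mathcal{C}^0$-close to $0$ the bivector $\pi-\tilde\pi$ remains nondegenerate, so it corresponds by inversion to a symplectic form $\omega'$, determined by $\omega'^{\sharp}=-((\pi-\tilde\pi)^\sharp)^{-1}$; conversely every symplectic form in a small $\mathcal{C}^0$-neighbourhood of $\omega$ is obtained this way. Composing the three bijections yields the bijection of the lemma.

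The only substantive computation needed is to identify the tubular neighbourhood $\mathcal{I}_\pi$ with the nondegeneracy locus of $\pi-\tilde\pi$ and to recover the explicit formula. From the definition of $I$ as $\wedge^2\pi^\sharp$ one reads off
\[
I(\beta)^\sharp=-\pi^\sharp\circ\beta^\sharp\circ\pi^\sharp,
\]
so that
\[
(\pi-I(\beta))^\sharp \;=\; (\mathrm{id}_{T^*M}+\pi^\sharp\beta^\sharp)\circ\pi^\sharp.
\]
This shows that $\pi-I(\beta)$ is nondegenerate precisely when $\mathrm{id}_{TM}+\pi^\sharp\beta^\sharp$ is invertible, i.e. precisely when $\beta\in\Gamma(\mathcal{I}_\pi)$. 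Inverting and using $\omega^\sharp=-(\pi^\sharp)^{-1}$ gives
\[
\omega'^{\sharp}=\omega^\sharp\circ(\mathrm{id}_{TM}+\pi^\sharp\beta^\sharp)^{-1}.
\]
A short algebraic manipulation, inserting $\omega^\sharp\pi^\sharp=-\mathrm{id}_{T^*M}$ into $\omega^\sharp=\omega^\sharp+\beta^\sharp+\omega^\sharp\pi^\sharp\beta^\sharp$ and right-multiplying by $(\mathrm{id}_{TM}+\pi^\sharp\beta^\sharp)^{-1}$, rewrites the last display as $\omega^\sharp+\beta^\sharp(\mathrm{id}_{TM}+\pi^\sharp\beta^\sharp)^{-1}$, which is the claimed formula.

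The main obstacle is bookkeeping rather than content: fixing signs between the various sharp-map conventions and, in particular, verifying the identity $I(\beta)^\sharp=-\pi^\sharp\beta^\sharp\pi^\sharp$. Skew-symmetry of the output does not require separate proof, since $\omega'^\sharp$ is constructed by inverting the manifestly skew map $(\pi-I(\beta))^\sharp$, hence automatically arises from a genuine $2$-form. Nondegeneracy of $\omega'$ for small $\beta$ is then equivalent to $\beta\in\Gamma(\mathcal{I}_\pi)$, and closedness of $\omega'$ is equivalent to $\pi-I(\beta)$ being Poisson, which by the above is equivalent to the Maurer--Cartan equation for $\beta$. This closes the bijection.
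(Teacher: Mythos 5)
Your proposal is correct and follows essentially the same route as the paper: both pass through the isomorphism $I=\wedge\pi^\sharp$ of Lemma \ref{lem:strict}, translate the Maurer--Cartan equation for $\beta$ into the Poisson condition for $\pi-I(\beta)$, identify $\Gamma(\mathcal{I}_\pi)$ with the nondegeneracy locus via $(\pi-I(\beta))^\sharp=(\mathrm{id}_{TM}+\pi^\sharp\beta^\sharp)\pi^\sharp$, and recover the explicit sharp-map formula by the same algebraic manipulation. (Only a cosmetic slip: in your display for $(\pi-I(\beta))^\sharp$ the identity factor should be $\mathrm{id}_{TM}$, not $\mathrm{id}_{T^*M}$, since $\pi^\sharp\beta^\sharp$ is an endomorphism of $TM$.)
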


\begin{proof}
It is well-known that under the correspondence between non-degenerate $2$-forms and non-degenerate bivector fields, the closeness of the $2$-form $\omega$ corresponds to the Poisson condition for $\pi$. 
By Lemma \ref{lem:strict} (b), the Poisson structures  nearby $\pi$ are given by $\pi-\tilde{\pi}$ where $\tilde{\pi}$ satisfies the Maurer-Cartan equation of the dg Lie algebra $\mathfrak{X}^\mathrm{multi}(M)$.

Since   $\pi$ is non-degenerate, the map $I$   is an isomorphism between differential forms and multivector fields.
The 2-form $\beta:=I^{-1}(\tilde{\pi})$ satisfies the Maurer-Cartan equation $ d\beta +\frac{1}{2}[\beta,\beta]_\pi=0$  by Lemma \ref{lem:strict} (a). Notice that  $\pi-\tilde{\pi}=\pi-I\beta$ has sharp map 
\begin{equation}\label{eq:pibetasharp}
(\mathrm{id}_{TM}+\pi^{\sharp}\beta^\sharp)\pi^\sharp\colon T^*M\to TM,  
\end{equation}
 so it is non-degenerate if{f} $\beta \in \Gamma(\mathcal{I}_\pi)$. Hence we obtain a bijection between  2-forms $\beta$ as in the statement on one side, and symplectic forms
corresponding to $\pi-I\beta$ on the other side.

The latter can be described as follows:
the sharp map is
\begin{eqnarray*}
-[(\pi-I\beta)^{\sharp}]^{-1}&=&
-(\pi^\sharp)^{-1}(\mathrm{id}_{TM}+\pi^{\sharp}\beta^\sharp)^{-1}\\
&=&\omega^\sharp((\mathrm{id}_{TM} + \pi^\sharp\beta^\sharp) - \pi^\sharp\beta^\sharp)(\mathrm{id}_{TM}+\pi^\sharp\beta^\sharp)^{-1}\\
&=&\omega^\sharp + \beta^\sharp(\mathrm{id}_{TM}+\pi^\sharp\beta^\sharp)^{-1},
\end{eqnarray*}
where in the first equality we used  \eqref{eq:pibetasharp}.
  \end{proof}

 \addtocontents{toc}{\protect\mbox{}\protect}
\section{\textsf{Parametrizing skew-symmetric bilinear forms}}\label{section: linear algebra}

In this section, we discuss the rank condition on pre-symplectic structures.
 Since we postpone the discussion of integrability
to Section \ref{subsection: Koszul for pre-symplectic}, everything boils down
to linear algebra.
In Section \ref{subsection: Dirac parametrization} we introduce a certain local parametrization of skew-symmetric bilinear forms,
which is inspired by Dirac geometry. 
 
\subsection{A parametrization inspired by Dirac geometry}\label{subsection: Dirac parametrization}

Let $V$ be a finite-dimensional, real vector space. Fix $Z\in \wedge^2 V$ a bivector, which can be encoded by the linear map  $$Z^\sharp\colon V^* \to V, \quad \xi \mapsto \iota_\xi Z = Z(\xi,\cdot).$$

\begin{definition}
We denote by $\mathcal{I}_Z$ the open neighborhood of
$0 \subset \wedge^2 V^*$ consisting of those elements $\beta$
for which the map $\mathrm{id} + Z^\sharp \beta^\sharp\colon V\to V$ is invertible.
\end{definition}

We consider the map  $F \colon \mathcal{I}_Z\to \wedge^2 V^*$ determined by 
\begin{equation}\label{eq:alphabeta}
(F (\beta))^\sharp = \beta^\sharp(\id + Z^{\sharp}\beta^\sharp)^{-1}.
\end{equation}
This map is clearly non-linear and smooth.

\begin{remark}\label{rem:alfabeta}
\begin{itemize}
\item[(i)]
We have $\mathrm{ker}(\beta)=\mathrm{ker}(F(\beta))$. The inclusion ``$\subset$'' follows directly from Equation \eqref{eq:alphabeta}, using the fact that $\id+Z^{\sharp}\beta^{\sharp}$ preserves $\mathrm{ker}(\beta)$. Further, since  $\id+Z^{\sharp}\beta^{\sharp}$  is an isomorphism, the dimensions of $\mathrm{ker}(\beta)$ and $\mathrm{ker}(F(\beta))$ are the same.
\item[(ii)]
$F\colon \mathcal{I}_Z \to \wedge^2V^*$ bijects onto $\mathcal{I}_{-Z}$, with inverse $\alpha\mapsto \alpha^\sharp(\id - Z^{\sharp}\alpha^\sharp)^{-1}$.   Indeed $$\id-Z^{\sharp}(F(\beta))^{\sharp}=\id-Z^{\sharp}\beta^\sharp(\id + Z^{\sharp}\beta^\sharp)^{-1}=(\id+Z^{\sharp}\beta^{\sharp})^{-1},$$ showing that $$(F(\beta))^\sharp(\id - Z^{\sharp}(F(\beta))^\sharp)^{-1}
=\beta^\sharp.$$
\item [(iii)] {Denote by $G$ the image of $Z^{\sharp}$ (so $Z$ can be regarded as a non-degenerate element of $\wedge^2 G$).
For any $\beta \in  \wedge^2V^*$, we have  $\beta\in  \mathcal{I}_Z$ if{f} $\sigma:=\beta|_{\wedge^2G}$ satisfies the condition that
$\mathrm{id_G} + Z^\sharp \sigma^\sharp\colon G\to G$ is invertible. This follows immediately writing $\mathrm{id_V} + Z^\sharp \beta^\sharp\colon V\to V$ as a block matrix in terms of any decomposition $V=K\oplus G$, and noticing that it is a lower triangular matrix.}
\end{itemize}
\end{remark}
$F$ is a diffeomorphism from $\mathcal{I}_{Z}$ to $\mathcal{I}_{-Z}$, which keeps the origin fixed.
We now use this transformation to construct submanifold charts for the space of skew-symmetric bilinear forms on $V$ of some fixed rank.

\begin{definition}
The \emph{rank} of an element $\eta \in \wedge^2 V^*$ is the rank
of the linear map $\eta^\sharp: V \to V^*$.
We denote the space of $2$-forms on $V$ of rank $k$ 
by $(\wedge^2V^*)_k$.
\end{definition}

Assume from now on that $\eta \in \wedge^2 V^*$ is of rank $k$.
We fix a subspace $G\subset V$, which is complementary to the kernel $K=\ker(\eta^\sharp)$.
Let
$ r: \wedge^2 V^* \to \wedge^2 K^*$ be the restriction map; we have the natural identification
$\ker(r) \cong \wedge^2 G^* \oplus (G^*\otimes K^*)$.
Since the restriction of $\eta$ to $G$ is non-degenerate, there is a unique
element $Z \in \wedge^2 G \subset\wedge^2 V$ determined by the requirement that
$$ Z^\sharp: G^*\to G, \quad \xi \mapsto \iota_\xi Z=Z(\xi,\cdot)$$
equals $-(\eta\vert_G^\sharp)^{-1}$.  

\begin{definition}\label{definition: Dirac exponential map}
The Dirac exponential map $\exp_{\eta}$ of $\eta$ (and for fixed $G$) is the mapping
$${\exp_{\eta}}\colon \mathcal{I}_Z \to \wedge^2 V^*, \quad \beta \mapsto \eta + 
F(\beta).$$ 
\end{definition}

\begin{remark}\label{rem:symexp}
When $\eta$ is non-degenerate, the construction of a nearby symplectic  form out of a small 2-form $\beta$ carried out in Subsection \ref{subsection: symplectic} agrees with $\exp_{\eta}(\beta)$, the image of $\beta$ under the Dirac exponential map.
This is clear from Lemma \ref{lem:phibinv} and Equation \eqref{eq:alphabeta}, and further  gives a justification for Definition \ref{definition: Dirac exponential map}.
\end{remark}

The following theorem asserts that $\exp_{\eta}$ is a submanifold chart for $(\wedge^2 V^*)_k \subset \wedge^2 V^*$.

\begin{theorem}\label{theorem: almost Dirac structures}
\hspace{0cm}
\begin{enumerate}
\item[(i)] Let $\beta \in \mathcal{I}_Z$. Then 
 $\exp_{\eta}(\beta)$ lies in $(\wedge^2 V)_k$ if, and only if, $\beta$ lies in $\ker(r)=(K^*\otimes G^*)\oplus \wedge^2G^*$.

\item[(ii)] 
Let $\beta = (\mu,\sigma) \in \mathcal{I}_Z\cap ((K^*\otimes G^*)\oplus \wedge^2G^*)$. Then  $\exp_{\eta}(\beta)$ is the unique skew-symmetric bilinear form on $V$ with the following properties:
\begin{itemize}
\item  its restriction to $G$ equals {$(\eta+F(\sigma))|_{\wedge^2G}$}
\item its kernel is the graph of the map $Z^\sharp \mu^\sharp= -(\eta\vert_G^{\sharp})^{-1}\mu^\sharp: K\to G$.
\end{itemize}

\item[(iii)]
The Dirac exponential map $\exp_{\eta}: \mathcal{I}_Z \to \wedge^2 V^*$ restricts to a 
diffeomorphism 
$$\mathcal{I}_Z \cap (K^*\otimes G^*)\oplus \wedge^2G^* \stackrel{\cong}{\longrightarrow}
{\{\eta' \in (\wedge^2 V^*)_k|\; \ker(\eta') \text{ is transverse to $G$}\}}
$$
onto an open neighborhood of $\eta$  in  $(\wedge^2 V^*)_k$.
\end{enumerate}
\end{theorem}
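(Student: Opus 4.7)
The plan is to analyze $\exp_{\eta}(\beta)^\sharp$ via the block decomposition $V = K \oplus G$ and its dual $V^* = G^* \oplus K^*$ (identifying $G^*$ with $K^\circ$ and $K^*$ with $G^\circ$). Setting $A := \id + Z^\sharp \beta^\sharp$, which is invertible for $\beta \in \mathcal{I}_Z$, a direct computation using the identity $\eta^\sharp Z^\sharp = -p_{G^*}$ (which encodes the relation $Z^\sharp = -(\eta|_G^\sharp)^{-1}$) yields the master formula
\begin{equation*}
(\exp_{\eta}(\beta))^\sharp \circ A \;=\; \eta^\sharp + p_{K^*}\beta^\sharp.
\end{equation*}
Since $A$ is invertible, every question about the rank, kernel, or restriction of $\exp_{\eta}(\beta)$ translates into the same question for this right hand side, which has a transparent form: $\mathrm{im}(\eta^\sharp) = G^*$ and $\mathrm{im}(p_{K^*}\beta^\sharp) \subseteq K^*$ sit in complementary summands of $V^*$.

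For (i), this complementarity gives $\ker(\eta^\sharp + p_{K^*}\beta^\sharp) = K \cap \ker(p_{K^*}\beta^\sharp)$, and $K$ is contained in the second factor if and only if $\beta(K,K) = 0$, i.e., $r(\beta) = 0$. When this holds the kernel has dimension $\dim K = \dim V - k$, giving rank $k$; otherwise the intersection is a proper subspace of $K$ and the rank is strictly larger. For (ii), with $r(\beta) = 0$ and $\beta = \mu + \sigma$, the master identity gives $\ker(\exp_{\eta}(\beta))^\sharp = A(K)$, and since $\sigma^\sharp$ vanishes on $K$ we have $A X = X + Z^\sharp \mu^\sharp X$ for $X \in K$, producing exactly the graph of $Z^\sharp \mu^\sharp : K \to G$. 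For the restriction to $G$, I substitute $v = A g_1$ with $g_1 \in G$ into the master identity and pair with $g_2 \in G$: the $\mu$-contribution drops out because $\mu \in K^* \otimes G^*$ vanishes on $G \otimes G$, leaving $\exp_{\eta}(\beta)(A g_1, g_2) = \eta(g_1, g_2)$. Since $A g_1 = g_1 + Z^\sharp \sigma^\sharp g_1$ depends only on $\sigma$, the same formula holds with $\sigma$ in place of $\beta$, so $\exp_{\eta}(\beta)|_{\wedge^2 G} = \exp_{\eta}(\sigma)|_{\wedge^2 G} = (\eta + F(\sigma))|_{\wedge^2 G}$. Uniqueness is immediate: any skew form whose kernel $L$ satisfies $L \oplus G = V$ is determined by $L$ and its restriction to $G$, since decomposing vectors as $v_i = X_i + g_i$ with $X_i \in L$ reduces its value to the restriction.

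Part (iii) is then a formal consequence. By Remark \ref{rem:alfabeta}(ii), $\exp_{\eta}$ is a diffeomorphism from $\mathcal{I}_Z$ onto the open neighborhood $\eta + \mathcal{I}_{-Z}$ of $\eta$ in $\wedge^2 V^*$. Part (i) identifies the preimage of $(\wedge^2 V^*)_k$ as the submanifold $\mathcal{I}_Z \cap \ker(r)$, part (ii) shows the image lies in $\{\eta' : \ker(\eta') \text{ transverse to } G\}$, and $(\eta + \mathcal{I}_{-Z}) \cap (\wedge^2 V^*)_k$ is an open neighborhood of $\eta$ in $(\wedge^2 V^*)_k$ because $\eta + \mathcal{I}_{-Z}$ is open in $\wedge^2 V^*$ and contains $\eta$. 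I expect the main hurdle to be purely algebraic: deriving the master identity and recognizing that the images of $\eta^\sharp$ and $p_{K^*}\beta^\sharp$ land in complementary summands of $V^*$, which is what decouples the kernel condition from the twist by $A$ and makes all three parts fall out.
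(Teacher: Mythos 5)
Your master formula is correct and is essentially the paper's own argument in disguise: the paper's Lemma \ref{lem:kernel} rewrites $w\in\ker(\exp_\eta(\beta))$ as $\eta^\sharp v=-\eta^\sharp Z^\sharp\beta^\sharp v-\beta^\sharp v$ with $v=A^{-1}w$ and then observes that the two sides land in complementary summands of $V^*$ --- which is exactly your identity $(\exp_\eta(\beta))^\sharp\circ A=\eta^\sharp+p_{K^*}\beta^\sharp$ together with the remark that $\mathrm{im}(\eta^\sharp)\subseteq K^\circ$ and $\mathrm{im}(p_{K^*}\beta^\sharp)\subseteq G^\circ$. Parts (i) and (ii) then go through as you describe, and your treatment of the restriction to $G$ and of uniqueness is fine (you implicitly use that $\sigma\in\mathcal{I}_Z$ whenever $\beta\in\mathcal{I}_Z$, which is Remark \ref{rem:alfabeta}(iii), so $F(\sigma)$ is defined).

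There is, however, a genuine gap in part (iii). The theorem asserts that the image of $\mathcal{I}_Z\cap\bigl((K^*\otimes G^*)\oplus\wedge^2G^*\bigr)$ is \emph{exactly} $\{\eta'\in(\wedge^2V^*)_k \mid \ker(\eta')\pitchfork G\}$. You prove the inclusion ``$\subseteq$'' (via (ii), the kernel of $\exp_\eta(\beta)$ is a graph over $K$, hence transverse to $G$) and you prove that the image is open in $(\wedge^2V^*)_k$, but you never prove surjectivity: given $\eta'$ of rank $k$ with $\ker(\eta')\oplus G=V$, you must show $\eta'-\eta\in\mathcal{I}_{-Z}$, since your description of the image is $(\eta+\mathcal{I}_{-Z})\cap(\wedge^2V^*)_k$. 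This is precisely the content of the paper's proof of (iii): by Remark \ref{rem:alfabeta}(iii) the condition $\eta'-\eta\in\mathcal{I}_{-Z}$ reduces to invertibility of $\id_G-Z^\sharp(\eta'|_G-\eta|_G)^\sharp$ on $G$, which factors as $(\eta|_G^\sharp)^{-1}\bigl[(\eta|_G)^\sharp-( \eta|_G)^\sharp Z^\sharp(\eta'|_G-\eta|_G)^\sharp\bigr]=(\eta|_G^\sharp)^{-1}(\eta'|_G)^\sharp$ up to sign, hence is equivalent to $(\eta'|_G)^\sharp$ being invertible, i.e.\ to $\ker(\eta')\oplus G=V$ by a rank count. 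Without this converse your (iii) only shows that $\exp_\eta$ is a diffeomorphism onto \emph{some} open neighbourhood of $\eta$ contained in the transversality locus, not onto that locus itself.
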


To prove the theorem we need a technical lemma.
\begin{lemma}\label{lem:kernel}
For  any $\beta \in \mathcal{I}_Z$ we have
$$\ker(\exp_{\eta}(\beta))= \text{image of the restriction of  $\id + Z^\sharp \beta^\sharp_m$ to $\ker(\beta_K)\subset K$}.$$ Here we
 use the notation
$$\beta = \beta_K + \beta_m + \beta_G \in \wedge^2 K^*\oplus (K^*\otimes G^*) \oplus \wedge^2 G^*.$$
\end{lemma}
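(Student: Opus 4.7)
The plan is to unwind the definitions directly and reduce the problem to a block-triangular computation using the decomposition $V = K\oplus G$ and the corresponding dual decomposition $V^*=K^{\circ}\oplus G^{\circ}$, which we identify with $G^*\oplus K^*$ via the obvious pairings.

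First I would observe that, since $\id + Z^{\sharp}\beta^{\sharp}$ is invertible on $\mathcal{I}_Z$, every element of $V$ may be uniquely written as $w = (\id + Z^{\sharp}\beta^{\sharp})v$ for some $v\in V$, and for such $w$ one has $(F(\beta))^{\sharp}w = \beta^{\sharp}v$ by the very definition \eqref{eq:alphabeta}. Hence $w\in\ker(\exp_{\eta}(\beta))$ if and only if
\[
0 = \eta^{\sharp}w + (F(\beta))^{\sharp}w = \eta^{\sharp}(\id + Z^{\sharp}\beta^{\sharp})v + \beta^{\sharp}v = \eta^{\sharp}v + (\id + \eta^{\sharp}Z^{\sharp})\beta^{\sharp}v.
\]
The next step is to identify the operator $\id + \eta^{\sharp}Z^{\sharp}$ explicitly. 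Since $Z\in\wedge^2 G$, the map $Z^{\sharp}$ vanishes on $K^*$ and, by the defining relation $Z^{\sharp}|_{G^*} = -(\eta|_G^{\sharp})^{-1}$, one computes $\eta^{\sharp}Z^{\sharp} = -\mathrm{pr}_{G^*}$ on $V^*$. Consequently $\id + \eta^{\sharp}Z^{\sharp} = \mathrm{pr}_{K^*}$, and the kernel equation becomes
\[
\eta^{\sharp}v + \mathrm{pr}_{K^*}(\beta^{\sharp}v) = 0.
\]
Since $\eta^{\sharp}v\in G^*$ and $\mathrm{pr}_{K^*}(\beta^{\sharp}v)\in K^*$ lie in complementary summands of $V^*$, both terms must vanish independently.

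From $\eta^{\sharp}v = 0$ one obtains $v\in K$, and for such $v$ a short inspection of the components of $\beta$ shows that $\beta^{\sharp}v = \beta_K^{\sharp}v + \beta_m^{\sharp}v$ with $\beta_K^{\sharp}v\in K^*$ and $\beta_m^{\sharp}v\in G^*$. Hence $\mathrm{pr}_{K^*}(\beta^{\sharp}v) = \beta_K^{\sharp}v$, whose vanishing is precisely the condition $v\in\ker(\beta_K)\subset K$. Finally, for $v\in\ker(\beta_K)\subset K$, using again that $Z^{\sharp}$ kills the $K^*$-part, one finds
\[
w = (\id + Z^{\sharp}\beta^{\sharp})v = v + Z^{\sharp}\beta_m^{\sharp}v = (\id + Z^{\sharp}\beta_m^{\sharp})v,
\]
which gives exactly the claimed description of $\ker(\exp_{\eta}(\beta))$.

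The only delicate point is the bookkeeping in the dual decomposition, i.e.\ keeping careful track of which summand of $V^*$ each term lives in; once $\eta^{\sharp}Z^{\sharp} = -\mathrm{pr}_{G^*}$ is established, the rest is a direct component-wise reading of the equation.
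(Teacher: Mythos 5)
Your argument is correct and follows essentially the same route as the paper: substitute $v=(\id+Z^{\sharp}\beta^{\sharp})^{-1}w$, observe that $\eta^{\sharp}Z^{\sharp}=-\mathrm{pr}_{G^{*}}$ so that the kernel equation splits into a $K^{\circ}$-component and a $G^{\circ}$-component which must vanish separately, yielding $v\in\ker(\beta_K)\subset K$ and then $w=(\id+Z^{\sharp}\beta_m^{\sharp})v$. The paper phrases the same decomposition via explicit block matrices rather than projections, but the content is identical.
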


\begin{proof}  For all $w\in V$ we have
\begin{align}
w\in \ker(\exp_{\eta}(\beta))  \quad &\Longleftrightarrow \quad \eta^{\sharp}w=-\beta^\sharp(\id + Z^{\sharp}\beta^\sharp)^{-1}w \nonumber\\
\quad &\Longleftrightarrow \quad \eta^{\sharp}(\id + Z^{\sharp}\beta^\sharp)v=-\beta^\sharp v \nonumber, \qquad \textrm{where $v:=(\id + Z^{\sharp}\beta^\sharp)^{-1}w$},\\
\quad & \Longleftrightarrow \quad \eta^{\sharp}v=-\eta^{\sharp}Z^{\sharp}\beta^\sharp v-\beta^\sharp v. \label{eq:separate}
\end{align}
The endomorphism $\eta^{\sharp}Z^\sharp$ has kernel $K$ and equals $-\mathrm{id}_G$ on $G$, hence
with respect to the decomposition $V=K\oplus G$,
the endomorphism 
$-\eta^{\sharp}Z^\sharp\beta^\sharp$ reads
$\left(\begin{array}{cc}0 &  0 \\
(\beta_m^{\sharp})|_K & \beta_G^{\sharp}\end{array}\right)$, and the endomorphism on the  right-hand side of Equation \eqref{eq:separate} reads 
$-\left(\begin{array}{cc}\beta_K^{\sharp} & (\beta_m^{\sharp})|_G\\ 0 &  0\end{array}\right)$ and, in particular, takes values in $K^*$. In contrast, the endomorphism $\eta^{\sharp}$ on the left-hand side of  Equation \eqref{eq:separate} takes values in $G^*$. Hence both sides have to vanish, and Equation \eqref{eq:separate} is equivalent to the condition  $v\in \ker(\beta_K)=\{v \in K \, : \, \iota_v\beta_K=0\}$,
where we used $\ker(\eta)=K$.\end{proof}

\begin{proof}[Proof of Theorem \ref{theorem: almost Dirac structures}]
We use the decomposition of $\beta$ as in Lemma \ref{lem:kernel}.

(i) By Lemma \ref{lem:kernel}, $\exp_{\eta}(\beta)$ lies in $(\wedge^2 V)_k$  if{f} $\beta_K=0$.

(ii) For  any $\beta \in \mathcal{I}_Z$,
 Lemma \ref{lem:kernel} implies that the second property on the kernel of $\exp_{\eta}(\beta)$
is satisfied, so we only have to prove the first property.
Since $Z^{\sharp}$ has image $G$ and kernel $K$, the map $\id + Z^{\sharp}\beta^\sharp$ sends $G$ isomorphically into itself, and
 its restriction to $G$ equals $(\id_G + Z^{\sharp}\beta_G^\sharp)$.
Hence $$(F(\beta))^{\sharp}|_G=
 \beta^\sharp(\id + Z^{\sharp}\beta^\sharp)^{-1}|_G=
 \beta^\sharp(\id_G + Z^{\sharp}\beta_G^\sharp)^{-1}.$$
Composing with the projection $V^*=K^*\oplus G^*\to G^*$ we obtain
 $\beta_G^\sharp(\id_G + Z^{\sharp}\beta_G^\sharp)^{-1}\colon G\to G^*$,
 so $F(\beta)(Y_1,Y_2)=F(\beta_G)(Y_1,Y_2)$ for all $Y_1,Y_2\in G$. This holds for  any $\beta \in \mathcal{I}_Z$, in particular also when $\beta_K=0$.

(iii) 
{
By item (i) and Remark \ref{rem:alfabeta}  (ii), the map $\exp_{\eta}$ sends $\mathcal{I}_Z \cap (K^*\otimes G^*)\oplus \wedge^2G^*$ bijectively onto 
$\{\eta' \in (\wedge^2 V^*)_k|\; \eta'-\eta\in \mathcal{I}_{-Z}\}$.
  Take any $\eta'\in (\wedge^2 V^*)_k$.
By Remark \ref{rem:alfabeta} (iii),
$\eta'-\eta\in \mathcal{I}_{-Z}$  is equivalent to $Id_G-Z^{\sharp}
(\eta'|_G-\eta|_G)^\sharp\colon G\to G$ being invertible. This in turn is equivalent to
 $(\eta'|_G)^\sharp$ being invertible, i.e. to $\ker(\eta')\oplus G=V$, due to
$$(\eta'|_G)^\sharp=(\eta|_G)^\sharp+(\eta'|_G-\eta|_G)^\sharp=(\eta|_G)^\sharp[Id_G-Z^{\sharp}
(\eta'|_G-\eta|_G)^\sharp]$$ and the invertibility of $(\eta|_G)^\sharp$.
}
\end{proof}

\begin{remark}\label{rem:parametrize}
We notice that the construction of $\exp_{\eta}$ can be readily extended to the case of vector bundles. In particular, given a pre-symplectic manifold $(M,\eta)$, the choice of a complementary subbundle $G$
to the kernel $K$ of $\eta$ yields a fibrewise map
$$\exp_{\eta}: (K^*\otimes G^*)\oplus (\wedge^2 G^*) \to \wedge^2 T^*M,$$
which maps the zero section to $\eta$, and an open neighborhood thereof into the space of $2$-forms of rank equal to that of $\eta$.
As a consequence, we can parametrize deformations of $\eta$
inside $\Presym^k(M)$
by sections $(\mu, \sigma)\in \Gamma(K^*\otimes G^*)\oplus \Gamma(\wedge^2 G^*) \cong \Omega_\hor^2(M)$ which are sufficiently close
to the zero section, and which satisfy
$$d ((\exp_{\eta})_*(\mu,\sigma))=0,$$
with $d$ the de~Rham differential.
In Section \ref{subsection: Koszul for pre-symplectic} we will show that the latter integrability condition can be rephrased in terms of an \lione structure on $\Omega(M)[2]$.
\end{remark}

  \addtocontents{toc}{\protect\mbox{}\protect}
\section{\textsf{The Koszul $L_\infty$-algebra}}\label{section: Koszul Lie 3-algebra}

In this section we introduce the Koszul $L_\infty$-algebra
of a pre-symplectic manifold $(M,\eta)$. This $L_\infty$-algebra lives on $\Omega_\hor(M)$ (with a certain shift in degrees),
and the zero set of its Maurer-Cartan equation parametrizes a neighborhood of $\eta$ in $\Presym^k(M)$.

\subsection{An $L_\infty$-algebra associated to a bivector field}
\label{subsection: Koszul for bivectors}

In this subsection, we introduce an $L_\infty$-algebra, which is naturally attached to some bivector field $Z$ on a manifold $M$. The statements made in this subsection will be proven in {Section
\ref{subsection: proofs L-infty} below.

Recall that there are two basics operations on differential forms, which one can associate to a multivector field $Y\in \Gamma(\wedge^k TM)$:
contraction $\iota_Y: \Omega^\bullet(M)\to \Omega^{\bullet-k}(M)$, and the Lie derivative $\Lie_Y: \Omega^\bullet(M) \to \Omega^{\bullet-k+1}(M)$.
The precise conventions and basic facts about these operations can be found in Appendix \ref{sec:Cartan}. Let us just note the crucial relation
$$ \Lie_Y = [\iota_Y,d]=\iota_Y \circ d - (-1)^{k} d \circ \iota_Y, $$
known as {\em Cartan's magic formula}.

\begin{definition}\label{definition: Koszul bracket of bivector field}
Let $Z$ be a bivector field on $M$. The {\em Koszul bracket} associated to $Z$ is the operation
\begin{eqnarray*}
 & [\cdot,\cdot]_Z: \Omega^r(M) \times \Omega^s(M) \to \Omega^{r+s-1}(M) &\\
 &\, [\alpha,\beta]_Z:=(-1)^{|\alpha|+1} {\Big{(}}\Lie_Z(\alpha\wedge \beta)-\Lie_Z(\alpha)\wedge \beta - (-1)^{|\alpha|}\alpha \wedge \Lie_Z(\beta){\Big{)}}.&
 \end{eqnarray*}
\end{definition}

When applied to $1$-forms $\alpha,\beta$, the Koszul bracket can be written as
$[\alpha,\beta]_Z=\Lie_{Z^{\sharp}\alpha} \beta -\Lie_{Z^{\sharp}\beta} \alpha-d\langle Z, \alpha\wedge \beta\rangle$ and agrees with formula \eqref{eq:KoszulBracket1f} on page \pageref{eq:KoszulBracket1f}.

\begin{remark}\label{rem:Kos2der}
The Koszul bracket $[\cdot,\cdot]_Z$ satisfies the following identities for all homogeneous $\alpha,\beta,\gamma \in \Omega(M)$:
\begin{itemize}
\item Graded skew-symmetry, i.e.~we have
$$[\alpha,\beta]_Z = -(-1)^{(|\alpha|-1)(|\beta|-1)}[\beta,\alpha]_Z.$$
\item Leibniz rule for $d$, i.e.
$$ d([\alpha,\beta]_Z)=[d\alpha,\beta]_Z + (-1)^{|\alpha|-1} [\alpha,d\beta]_Z.$$
\item Derivation property with respect to the wedge product $\wedge$:
$$ [\alpha,\beta\wedge \gamma]_Z = [\alpha,\beta]_Z\wedge \gamma + (-1)^{(|\alpha|-1)|\beta|}\beta \wedge [\alpha,\gamma]_Z.$$
\end{itemize}
After a shift in degree, i.e.~when working on the graded vector space $\Omega(M)[1]$ defined by $(\Omega(M)[1])^r=\Omega^{r+1}(M)$, the Koszul bracket therefore satisfies most of the identities required from a differential graded Lie algebra.
However, unless we assume that $Z$ is Poisson, i.e.~that it commutes with itself under the Schouten-Nijenhuis bracket, the Koszul bracket will fail to satisfy the graded version of the Jacobi identity.
\end{remark}

The failure of $[\cdot,\cdot]_Z$ to satisfy the Jacobi identity is mild. In fact, it can be encoded by a certain trilinear operator on differential forms. As a preparation, we introduce some notation: for a differential form $\alpha \in \Omega^r(M)$, we have 
$$\alpha^\sharp: TM \to \wedge^{r-1}T^*M, \quad v \mapsto \iota_v\alpha,$$
and, following \cite[\S 2.3]{FZgeo}, we extend this definition to a collection of forms $\alpha_1,\dots,\alpha_n$ by setting
\begin{eqnarray*}
 \alpha_1^\sharp \wedge \cdots \wedge \alpha_n^\sharp: \wedge^n TM &\to& \wedge^{|\alpha_1|+\cdots +|\alpha_n|-n}T^*M \\
 v_1\wedge \cdots \wedge v_n & \mapsto & \sum_{\sigma \in S_n}(-1)^{|\sigma|}\alpha_1^\sharp(v_{\sigma(1)})\wedge \cdots \wedge \alpha_n^\sharp(v_{\sigma(n)}).
\end{eqnarray*}

\begin{definition}\label{definition: trinary bracket}
We define the trinary bracket $[\cdot,\cdot,\cdot]_Z: \Omega^r(M)\times \Omega^s(M)\times \Omega^k(M)\to \Omega^{r+s+k-3}(M)$ associated to the bivector field $Z$
to be
$$[\alpha,\beta,\gamma]_Z:= (\alpha^\sharp \wedge \beta^\sharp \wedge \gamma^\sharp)(\frac{1}{2}[Z,Z]),$$
\end{definition}

\begin{remark}\label{rem:Kos3der}
The trinary bracket $[\cdot,\cdot,\cdot]_Z$ is a derivation in each argument, in the sense that
$$[\alpha, \beta, (\gamma\wedge \tilde{\gamma})]_Z=
[\alpha, \beta, \gamma]_Z\wedge \tilde{\gamma}+(-1)^{(|\alpha|+|\beta|-1)|\gamma|}\gamma\wedge [\alpha,\beta, \tilde{\gamma}]_Z.$$
{The trinary bracket satisfies $$[\alpha, \beta, \gamma]_Z=
-(-1)^{(|\alpha|-1)(|\beta|-1)}
[\beta, \alpha,\gamma]_Z,$$
and similarly when switching the order of $\beta$ and $\gamma$.}

The precise compatibility between $d$, $[\cdot,\cdot]_Z$ and $[\cdot,\cdot,\cdot]_Z$ is the following, and will be proven in Section \ref{subsection: proofs L-infty} below:
\end{remark}

\begin{proposition}\label{proposition: extended Koszul}
Let $Z$ be a bivector field on $M$. The multilinear maps $\lambda_1,\lambda_2,\lambda_3$ on the graded vector space $\Omega(M)[2]$ given by
\begin{enumerate}
\item $\lambda_1$ is the de~Rham differential $d$, acting on $\Omega(M)[2]$,
\item $\lambda_2(\alpha[2] \odot \beta[2])=-\Big(\Lie_Z(\alpha\wedge \beta) - \Lie_Z(\alpha)\wedge \beta - (-1)^{\vert \alpha \vert}\alpha \wedge \Lie_Z(\beta)\Big)[2] = (-1)^{|\alpha|} ([\alpha,\beta]_Z)[2]$,
\item and
\begin{eqnarray*}
\lambda_3(\alpha[2]\odot \beta[2]\odot \gamma[2])=(-1)^{|\beta|+1}\Big(\alpha^\sharp\wedge \beta^\sharp \wedge \gamma^\sharp(\frac{1}{2}[Z,Z])\Big)[2]
\end{eqnarray*}
\end{enumerate}
define the structure of an $L_\infty[1]$-algebra
on $\Omega(M)[2]$, or -- equivalently -- the structure of an $L_\infty$-algebra on $\Omega(M)[1]$, see Appendix \ref{appendix: reminder L-infty}.
\end{proposition}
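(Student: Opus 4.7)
My plan is to verify, one by one, the five $L_\infty[1]$-identities (of arities $1$ through $5$) that an $L_\infty[1]$-algebra with only $\lambda_1, \lambda_2, \lambda_3$ non-zero must satisfy. The arity-$1$ identity is $d^2=0$, and the arity-$2$ identity is the Leibniz rule for $d$ with respect to $\lambda_2$; both are essentially recorded in Remark \ref{rem:Kos2der} and follow from the graded-commutator identity $[d,\Lie_Z] = 0$ applied to the defining formula of the Koszul bracket, together with careful handling of the signs introduced by the degree shift $\Omega(M) \rightsquigarrow \Omega(M)[2]$.

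The arity-$3$ identity, which is the Jacobi-up-to-homotopy relation, will be the heart of the proof. It asserts that the Jacobiator of $\lambda_2$ equals the $d$-boundary of $\lambda_3$ plus insertions of $d$ into the arguments of $\lambda_3$. Since $\lambda_2$ measures the failure of $\Lie_Z$ to be a derivation of $\wedge$, expanding the Jacobiator produces terms quadratic in $\Lie_Z$. I would then reorganize them using the standard identity $\Lie_Z^2 = \frac{1}{2}\Lie_{[Z,Z]}$ (which follows from $[\Lie_Z,\Lie_Z] = \Lie_{[Z,Z]}$ for the Schouten bracket of bivector fields), and use $\Lie_{[Z,Z]} = [d,\iota_{[Z,Z]}]$ to exhibit the desired $d$-boundary. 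Alternatively, since $\lambda_2$ and $\lambda_3$ are multi-derivations of $\wedge$ in each slot (Remarks \ref{rem:Kos2der} and \ref{rem:Kos3der}), both sides of the identity are themselves multi-derivations in each slot, and it suffices to verify it on a set of algebra generators for $\Omega(M)$, namely smooth functions and their differentials. On triples $df, dg, dh$ the identity specializes to the classical formula
\[
\{\{f,g\}_Z,h\}_Z + \mathrm{c.p.} = \tfrac{1}{2}[Z,Z](df,dg,dh),
\]
while mixed cases reduce to the arity-$2$ Leibniz rule already established.

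The arity-$4$ and arity-$5$ identities will both stem from the Schouten Jacobi identity $[Z,[Z,Z]] = 0$. The arity-$5$ identity, which in this truncation takes the form $\lambda_3\circ\lambda_3 = 0$ in the symmetrized $L_\infty$-sense, encodes exactly this vanishing once one unwinds the definition of $\lambda_3$ as contraction with $\tfrac{1}{2}[Z,Z]$ and uses the wedge-contraction pairing. The arity-$4$ identity is a coherence between $\lambda_2(\lambda_3 \otimes \mathrm{id})$ and $\lambda_3(\lambda_2 \otimes \mathrm{id} \otimes \mathrm{id})$; after reducing to generators via the multi-derivation properties, it follows from the derivation property of the Koszul bracket with respect to $\wedge$.

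The hard part will not be the conceptual content --- each identity corresponds to a familiar statement in Cartan calculus or in the Schouten algebra --- but the careful tracking of Koszul signs produced by the degree shift and by the symmetrization conventions for the higher brackets. This bookkeeping is precisely the reason the authors defer the detailed proof to the dedicated Subsection \ref{subsection: proofs L-infty}; conceptually, the whole package can also be recast as the $L_\infty[1]$-algebra extracted from the commutative $BV_\infty$-type datum $(\Omega(M), \wedge, d, \iota_Z)$ via the Koszul--Kravchenko construction cited in the introduction.
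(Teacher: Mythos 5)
Your route is genuinely different from the paper's. The paper does not verify the five quadratic identities directly: it observes that $\Delta_Z:=d-t\Lie_Z-t^2\tfrac{1}{2}\iota_{[Z,Z]}$ squares to zero on $\Omega(M)[[t]]$ (this is exactly Cartan calculus: $[d,\Lie_Z]=0$, $\Lie_Z^2=\tfrac12\Lie_{[Z,Z]}=\tfrac12[d,\iota_{[Z,Z]}]$, $[\Lie_Z,\iota_{[Z,Z]}]=\iota_{[Z,[Z,Z]]}=0$, $[\iota_{[Z,Z]},\iota_{[Z,Z]}]=0$), so that $(\Omega(M),d,\Lie_Z,\iota_{[Z,Z]})$ is a commutative $BV_\infty$-algebra of degree $1$, and then invokes the general Koszul-bracket construction (Proposition \ref{proposition: Koszul brackets}, after \cite{Kravchenko,Voronov1}) which converts any such datum into an $L_\infty[1]$-algebra; the only remaining work is matching $\lambda_2,\lambda_3$ with $\Koszul(-\Lie_Z)_2$ and $\Koszul(-\tfrac12\iota_{[Z,Z]})_3$ via Lemma \ref{lemma: Koszul brackets for contraction}. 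You correctly name this as an alternative in your last sentence; what the paper's route buys is that all five identities are packaged into the single equation $\Delta_Z^2=0$ and the sign bookkeeping is delegated to the general proposition, whereas your direct verification is more elementary but shifts the entire burden onto the sign-tracking you acknowledge.

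There is, however, a concrete misattribution in your plan that would derail the execution. The arity-$4$ identity does \emph{not} follow from the derivation property of the Koszul bracket with respect to $\wedge$: on four exact $1$-forms $df_1,\dots,df_4$ it reads (up to signs) $\sum\pm\tfrac12[Z,Z](d\{f_i,f_j\}_Z,df_k,df_l)+\sum\pm\{\tfrac12[Z,Z](df_i,df_j,df_k),f_l\}_Z=0$, which is precisely a component of $[Z,[Z,Z]](df_1,\dots,df_4)=0$, i.e.\ the Schouten--Jacobi identity (equivalently $[\Lie_Z,\iota_{[Z,Z]}]=\iota_{[Z,[Z,Z]]}=0$). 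Conversely, the arity-$5$ identity does \emph{not} encode $[Z,[Z,Z]]=0$: with only $\lambda_3$ contributing it is the symmetrization of $\lambda_3(\lambda_3(\cdot,\cdot,\cdot),\cdot,\cdot)$, which on generators vanishes termwise because $\lambda_3$ kills $0$-forms ($f^\sharp=0$ for $f$ a function), and in operator form corresponds to $[\iota_{[Z,Z]},\iota_{[Z,Z]}]=0$, i.e.\ the commutativity of contraction operators — no Jacobi identity needed. So you have swapped the roles of your two inputs between arities $4$ and $5$. A smaller imprecision of the same kind occurs in arity $3$: the classical Jacobiator formula $\{\{f,g\},h\}+\mathrm{c.p.}=\tfrac12[Z,Z](df,dg,dh)$ actually appears on the mixed triple $(f,dg,dh)$ (where the $d$-insertion into the function slot produces $\lambda_3(df,dg,dh)$), while on $(df,dg,dh)$ one only sees its differential; the mixed cases therefore do not all "reduce to the arity-$2$ Leibniz rule." None of this invalidates the overall strategy — reduction to generators via the multi-derivation property of the symmetrized composites is sound — but as written the justification of the arity-$4$ step is wrong and must be replaced by an appeal to $[Z,[Z,Z]]=0$.
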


In a nutshell, Proposition \ref{proposition: extended Koszul}
asserts that $\lambda_1$, $\lambda_2$ and $\lambda_3$ obey a family of quadratic relations.
Besides the relations which involve only the de~Rham differential $d$ and the Koszul bracket $[\cdot,\cdot]_Z$, there is a relation which asserts that the Jacobiator of $[\cdot,\cdot]_Z$, seen as a chain map from $\Omega(M)[2]\odot \Omega(M)[2]\odot \Omega(M)[2]$ to $\Omega(M)[2]$, is zero-homotopic, with homotopy provided by $\lambda_3$.
A concise summary on $L_\infty[1]$-algebras can be found in Appendix \ref{appendix: reminder L-infty}.

\begin{proposition}\label{proposition: extended Koszul 2}
Let $Z$ be a bivector field on $M$.
There is an $L_\infty[1]$-{\em isomorphism} $\psi$ from the
$L_\infty[1]$-algebra $(\Omega(M)[2],\lambda_1,\lambda_2,\lambda_3)$
of Proposition \ref{proposition: extended Koszul}
to the $L_\infty[1]$-algebra $(\Omega(M)[2],\lambda_1)$, i.e.
the (twice suspended) de~Rham complex of $M$.
\end{proposition}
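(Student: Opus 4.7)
The plan is to exhibit $\psi$ explicitly, as an $L_\infty[1]$-morphism whose Taylor coefficients are built from iterated contractions with $Z$ and wedge products. The organising observation is that the source $L_\infty[1]$-structure $(\lambda_1,\lambda_2,\lambda_3)$ is the Koszul $L_\infty[1]$-structure associated, in the sense of Koszul \cite{Koszul} and Kravchenko \cite{Kravchenko}, to the commutative dg algebra $(\Omega(M),\wedge,d)$ equipped with the second-order operator $\iota_Z$ of degree $-2$: the binary bracket records the failure of $\iota_Z$ to be a derivation of $\wedge$, while the trinary bracket is controlled by $[\iota_Z,\iota_Z] = \iota_{[Z,Z]}$. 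This viewpoint is already implicit in Proposition \ref{proposition: extended Koszul} and will be the organising principle.

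I would set $\psi_1 = \mathrm{id}$ and, for $n\geq 2$, define the Taylor coefficient $\psi_n\colon \odot^n \Omega(M)[2] \to \Omega(M)[2]$ by a formula of the form
$$\psi_n(\alpha_1\odot\cdots\odot\alpha_n) = c_n\,\iota_Z^{n-1}(\alpha_1\wedge\cdots\wedge\alpha_n)$$
(up to Koszul signs), with rational $c_n$ fixed by the requirement that evaluation on the diagonal of a degree-zero element $\beta$ reproduces the map $F$ of Equation \eqref{eq:alphabeta}. In the symplectic toy case this is essentially forced by the identity $F(\beta)^\sharp = \beta^\sharp(\mathrm{id}+Z^\sharp\beta^\sharp)^{-1}$, whose geometric-series expansion dictates the $(-1)^{n-1}$ sign pattern and the occurrence of $\iota_Z^{n-1}$ at order $n$; cf.\ Remark \ref{rem:symexp}. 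The degree count works: on $\Omega(M)[2]$ the maps $\psi_n$ are of degree zero, and the shifted symmetric product on $\Omega(M)[2]$ is compatible with the graded commutative wedge product on $\Omega(M)$, so that $\psi_n$ is genuinely symmetric.

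The verification that $\psi$ is an $L_\infty[1]$-morphism then reduces to a finite family of identities between $d$, $\wedge$, $\iota_Z$ and $[Z,Z]$. The essential ingredients are Cartan's magic formula $\mathcal{L}_Z = [\iota_Z,d]$, the commutator identity $[\iota_Z,\iota_Z] = \iota_{[Z,Z]}$, and the fact that $\iota_Z$ is a second-order operator on $(\Omega(M),\wedge)$ --- precisely the same identities that already govern Proposition \ref{proposition: extended Koszul}. Since $\psi_1 = \mathrm{id}$ is invertible, any $L_\infty[1]$-morphism satisfying these compatibilities is automatically an $L_\infty[1]$-isomorphism. The main obstacle is the combinatorial bookkeeping required to pin down the signs and the constants $c_n$, and to organise the infinite family of morphism equations so that each reduces to a recognisable graded commutator identity in the operator algebra on $\Omega(M)$. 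I expect the cleanest route is to package $\psi$ as the morphism induced by a single "gauge-type" automorphism built from $\iota_Z$ (or, equivalently, to derive it from a suitable form of Voronov's higher-derived-bracket formalism), so that the morphism equations collapse back onto the identities already verified for the brackets themselves.
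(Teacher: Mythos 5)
Your closing suggestion --- package $\psi$ as the coalgebra automorphism generated by a single ``gauge-type'' operator built from $\iota_Z$, and reduce the morphism identities to the commutators $[\iota_Z,d]=\Lie_Z$, $[\iota_Z,\Lie_Z]=-\iota_{[Z,Z]}$ together with the fact that $\iota_Z$ has order $\le 2$ --- is exactly the paper's proof: there $\psi=e^{R_Z}$, where $R_Z$ is the coderivation of $\bigodot(\Omega(M)[2])$ whose only Taylor coefficient is the \emph{second Koszul bracket} $\Koszul(\iota_Z)_2$, and the structure maps of $e^{-R_Z}\circ\widehat{\lambda}_1\circ e^{R_Z}$ are read off from $\tfrac{(-1)^k}{k!}\mathrm{ad}(R_Z)^k\widehat{\lambda}_1$ using that $\Koszul$ intertwines commutator brackets. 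So the strategy is sound.

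The concrete ansatz you start from, however, is wrong, and the error already appears at $n=2$. The order-two morphism identity forces $[\widehat{d},\psi_2]=\lambda_2=-\Koszul(\Lie_Z)_2$, whose right-hand side contains the correction terms $\Lie_Z(\alpha)\wedge\beta+(-1)^{|\alpha|}\alpha\wedge\Lie_Z(\beta)$; with $\psi_2(\alpha\odot\beta)=c_2\,\iota_Z(\alpha\wedge\beta)$ the commutator with $d$ produces only $\mp c_2\,\Lie_Z(\alpha\wedge\beta)$ and misses them. The correct coefficient is
$\psi_2(\alpha\odot\beta)=\iota_Z(\alpha\wedge\beta)-\iota_Z(\alpha)\wedge\beta-\alpha\wedge\iota_Z(\beta)$,
i.e.\ the deviation of $\iota_Z$ from being a derivation of $\wedge$, and the higher $\psi_n$ are the Taylor coefficients of $e^{R_Z}$ (iterates of this binary operation), not bare powers $\iota_Z^{n-1}$ applied to the $n$-fold wedge. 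For the same reason your normalisation scheme cannot be repaired by adjusting $c_n$: even on the diagonal, for a $2$-form $\beta$ the form $\iota_Z(\beta\wedge\beta)$ differs from the quadratic term of the geometric series for $F(\beta)$ in \eqref{eq:alphabeta} by the trace term $2\,\iota_Z(\beta)\,\beta$ (compare Lemma \ref{lemma: sharp and the coderivation}), so no scalar multiple of $\iota_Z^{n-1}(\beta^{\wedge n})$ reproduces $(-1)^{n-1}\beta^\sharp(Z^\sharp\beta^\sharp)^{n-1}$. A minor related imprecision: in your setup the binary bracket $\lambda_2$ records the failure of $\Lie_Z$ (not of $\iota_Z$) to be a derivation; it is the \emph{morphism} coefficient $\psi_2$ that records the failure of $\iota_Z$. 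Once the ansatz is replaced by $\psi=e^{R_Z}$ with $R_Z$ the coderivation extending $\Koszul(\iota_Z)_2$, the rest of your outline (including the remark that invertibility of $\psi_1$ yields an isomorphism) goes through and coincides with Section \ref{subsubsection: proofs of extended Koszul 1 and 2} of the paper.
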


The interested reader can find the proof in Section \ref{subsection: proofs L-infty} below. 
Let us just point out that we have a rather explicit knowledge of the $L_{\infty}[1]$-isomorphism $\psi$
between $(\Omega(M)[2],\lambda_1,\lambda_2,\lambda_3)$
and $(\Omega(M)[2],\lambda_1)$.
We will make use of this knowledge below.

\begin{remark}
In case we can find an involutive complement $G$ to $K\subset TM$,
 the construction from 
Proposition \ref{proposition: extended Koszul}
yields the Koszul dg Lie algebra
associated to the regular Poisson structure $Z$ on $M$ given by
the negative inverse of $\eta\vert_{\wedge^2 G}$.
For $Z$ a Poisson bivector, Proposition \ref{proposition: extended Koszul 2} was established by Fiorenza and Manetti in \cite{Fiorenza-Manetti}.
\end{remark}

We now turn to the geometry encoded by the
\lione
$(\Omega(M)[2],\lambda_1,\lambda_2,\lambda_3)$.
To this end, recall that we can naturally associate the following equation to such a structure:

\begin{definition}\label{definition: Maurer-Cartan equation}
An element $\beta \in \Omega^2(M)$ is a {\em Maurer-Cartan element} 
of $(\Omega(M)[2],\lambda_1,\lambda_2,\lambda_3)$ if it satisfies the {\em Maurer-Cartan equation}
$$ d(\beta[2]) + \frac{1}{2}\lambda_2(\beta[2]\odot \beta[2]) + \frac{1}{6}\lambda_3(\beta[2]\odot \beta[2]\odot \beta[2])=0.$$
We denote the set of Maurer-Cartan elements by $\MC(Z)$.
\end{definition}

Recall that in Equation \eqref{eq:alphabeta} we introduced a map
 $F \colon \mathcal{I}_Z\to \wedge^2 T^*M$, 
where  $\mathcal{I}_Z \subset \Omega^2(M)$  consists of those 2-forms $\beta$
for which $\mathrm{id} + Z^\sharp \beta^\sharp$ is invertible. 

\begin{corollary}\label{corollary: MC for bivector fields}
There is an open subset $\mathcal{U}\subset \mathcal{I}_Z$, which contains the zero section of $\wedge^2 T^*M$, such that
a $2$-form $\beta \in \Gamma(\mathcal{U})$ is a Maurer-Cartan element of $(\Omega(M)[2],\lambda_1,\lambda_2,\lambda_3)$ if, and only if, the $2$-form $F(\beta)$
is closed.
\end{corollary}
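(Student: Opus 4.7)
The plan is to deduce the corollary from Proposition~\ref{proposition: extended Koszul 2} by identifying the map $F$, on a neighborhood of the zero section, with the transformation on Maurer-Cartan elements induced by the $L_\infty[1]$-isomorphism $\psi$.

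Recall that any $L_\infty[1]$-morphism $\psi=(\psi_n)_{n\ge 1}$ induces a map on (sufficiently small) Maurer-Cartan elements via the formal power series
$$\psi_*\colon\beta \longmapsto \sum_{n\ge 1}\tfrac{1}{n!}\,\psi_n(\beta^{\odot n}),$$
and this assignment sends MC-elements of the source to MC-elements of the target whenever the series converges. When $\psi$ is an $L_\infty[1]$-isomorphism, $\psi_*$ is a bijection on its domain of convergence. Since the target $L_\infty[1]$-algebra $(\Omega(M)[2],\lambda_1)$ has only $\lambda_1=d$ as nonzero structure map, the Maurer-Cartan equation there collapses to the closedness condition $d\alpha=0$. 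Hence it suffices to exhibit an open neighborhood $\mathcal{U}\subset\mathcal{I}_Z$ of the zero section of $\wedge^2T^*M$ on which $\psi_*(\beta)=F(\beta)$ for all $\beta\in\Gamma(\mathcal{U})$.

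To verify this identity, I would expand $F$ via the Neumann series $(\id+Z^\sharp\beta^\sharp)^{-1}=\sum_{k\ge 0}(-1)^k(Z^\sharp\beta^\sharp)^k$, which converges fibrewise precisely on $\mathcal{I}_Z$. This expresses $F(\beta)$ as an infinite sum whose $n$-th term is an explicit symmetric polynomial in $\beta$ of homogeneity $n$, built from the bivector $Z$ together with wedge products and contractions. Using the explicit form of $\psi$ advertised in the statement of Proposition~\ref{proposition: extended Koszul 2} (which arises from the commutative $BV_\infty$ machinery of \cite{Koszul,Kravchenko}), one then matches the $n$-th term of this series with $\tfrac{1}{n!}\psi_n(\beta^{\odot n})$. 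The desired neighborhood $\mathcal{U}$ is then obtained as an open neighborhood of the zero section on which this series is summable to a smooth form, which can be arranged using the fibrewise convergence of the Neumann expansion.

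The main obstacle is precisely this last step: the combinatorial identification between the Taylor coefficients of $F$ and the structure maps $\psi_n$ of the $L_\infty[1]$-isomorphism. This requires unwinding the explicit construction of $\psi$ from the Koszul/$BV_\infty$ framework and performing a careful accounting of signs, symmetrization factors, and the sharp/contraction dualities used in Definitions~\ref{definition: Koszul bracket of bivector field} and \ref{definition: trinary bracket}. Once this matching is established, the corollary follows at once from the general principle that $L_\infty$-isomorphisms carry Maurer-Cartan elements to Maurer-Cartan elements bijectively, together with the observation that $F(\beta)$ closed is equivalent to $\psi_*(\beta)$ satisfying the Maurer-Cartan equation of $(\Omega(M)[2],\lambda_1)$.
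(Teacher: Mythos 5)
Your overall strategy is the same as the paper's: reduce the corollary to the identity $\psi_*(\beta)=F(\beta)$ for $\beta$ in a neighborhood of the zero section, and then use that an $L_\infty[1]$-isomorphism transports Maurer--Cartan elements (running the argument for $\psi^{-1}=e^{-R_Z}$ to get the ``only if'' direction). That identity is precisely Proposition \ref{proposition: algebraic map = geometric map}, which the paper states before the corollary and then uses verbatim; the deduction of the corollary from it is the short computation
$dF(\beta)=d\psi_*(\beta)=\sum_{k\ge 1}\tfrac{1}{k!}\,\psi_{k+1}\bigl(\beta^{\odot k}\odot(\lambda_1(\beta)+\tfrac12\lambda_2(\beta\odot\beta)+\tfrac16\lambda_3(\beta\odot\beta\odot\beta))\bigr)$,
which also settles the analytic point of interchanging $d$ with the convergent sum.

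The step you flag as the ``main obstacle'' --- matching the $n$-th term of the Neumann series $\beta^\sharp\sum_{j\ge 0}(-1)^j(Z^\sharp\beta^\sharp)^j$ with $\tfrac{1}{n!}\psi_n(\beta^{\odot n})$ --- is indeed the whole content of that proposition, and your proposal does not supply the idea that makes it tractable. A direct combinatorial unwinding of the Taylor coefficients of $e^{R_Z}$ would be painful; the paper instead exploits the exponential form of $\psi$. One first computes the single binary coefficient in sharp form, namely $R_Z(\beta[2]\odot\tilde{\beta}[2])=\gamma[2]$ with $\gamma^\sharp=-(\beta^\sharp Z^\sharp\tilde{\beta}^\sharp+\tilde{\beta}^\sharp Z^\sharp\beta^\sharp)$ (Lemma \ref{lemma: sharp and the coderivation}), then introduces a formal parameter $t$ and defines $\alpha(t)=\sum_j\alpha_j t^j$ by $e^{tR_Z}e^{\beta}=e^{\alpha(t)}$. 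Differentiating in $t$ and projecting to $\Omega(M)[2]$ gives the recursion $\alpha_j=\tfrac{1}{j}R_Z\bigl(\tfrac12\sum_{r+s=j-1}\alpha_r\odot\alpha_s\bigr)$, from which $\alpha_j^\sharp=(-1)^j\beta^\sharp(Z^\sharp\beta^\sharp)^j$ follows by a one-line induction; setting $t=1$ produces exactly the Neumann series, which converges in the $\mathcal{C}^\infty$-topology for $\beta$ sufficiently $\mathcal{C}^0$-small. So the sign and symmetrization bookkeeping you anticipate collapses to one two-form computation plus an induction on a quadratic recursion; without some such device your outline remains incomplete at its crucial point.
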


A proof of Corollary \ref{corollary: MC for bivector fields} is given after Proposition \ref{proposition: algebraic map = geometric map} below.
{It can be shown {\cite{SZDirac}} that one} can in fact take $\mathcal{U}=\mathcal{I}_Z$. To lighten the notation, in the rest of the paper we will use this fact. \\

Summing up our findings, we have a smooth, fibrewise mapping 
$$ F: \mathcal{I}_Z \to \wedge^2 T^*M,$$ which has the intriguing property of mapping $2$-forms, which are Maurer-Cartan elements of $(\Omega(M)[2],\lambda_1,\lambda_2,\lambda_3)$, to {\em closed} $2$-forms.

We observe that the $L_\infty[1]$-isomorphism from Proposition \ref{proposition: extended Koszul 2}
$$ \psi: (\Omega(M)[2],\lambda_1,\lambda_2,\lambda_3)\to (\Omega(M)[2],\lambda_1)$$
also induces -- modulo convergence issues -- a map
$$ \psi_*: \Omega^2(M) \to \Omega^2(M), \quad \beta \mapsto \psi_*(\beta):= \sum_{k \ge 1} \frac{1}{k!}\psi_k(\beta^{\odot k}),$$
which has the property that it sends Maurer-Cartan elements
of $(\Omega(M)[2],\lambda_1,\lambda_2,\lambda_3)$ to {\em closed} $2$-forms.
The following result asserts that for $\beta$ sufficiently small, the convergence of $\psi_*(\beta)$ is guaranteed, in which case we recover $F(\beta)$ in the limit.

\begin{proposition}\label{proposition: algebraic map = geometric map}
For $\beta$ a $2$-form which is sufficiently $\mathcal{C}^0$-small, the power series
$ \psi_*(\beta)$
converges in the $\mathcal{C}^\infty$-topology to $F(\beta)$.
\end{proposition}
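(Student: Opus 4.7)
The plan is to compare the two expressions as formal power series in $\beta$, match them term by term, and then control convergence using the assumed $\mathcal{C}^0$-smallness of $\beta$.

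First I would expand $F(\beta)$ explicitly. For $\beta\in \Gamma(\mathcal{I}_Z)$ the operator $\id + Z^\sharp\beta^\sharp$ is invertible and admits a Neumann series expansion, so
\[
(F(\beta))^\sharp = \beta^\sharp(\id+Z^\sharp\beta^\sharp)^{-1} = \sum_{k\geq 0}(-1)^k \beta^\sharp(Z^\sharp \beta^\sharp)^k.
\]
Reading this off gives $F(\beta)=\sum_{k\geq 1}F_k(\beta)$, where $F_k(\beta)=(-1)^{k-1}\beta^\sharp(Z^\sharp\beta^\sharp)^{k-1}$ (viewed as a $2$-form via the musical identifications) is the $k$-homogeneous part in $\beta$, with $F_1(\beta)=\beta$.

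Second I would identify $\tfrac{1}{k!}\psi_k(\beta^{\odot k})$ with $F_k(\beta)$ using the explicit form of $\psi$ produced in the proof of Proposition \ref{proposition: extended Koszul 2}. The components $\psi_k$ are built by the commutative $BV_\infty$ to $L_\infty[1]$ transfer from iterated contractions with $Z$ alternating with wedge products; when all $k$ arguments coincide with the same $2$-form $\beta$, the symmetric sum over permutations collapses and the combinatorial factor exactly cancels the $1/k!$, leaving a single monomial of the form $\beta^\sharp(Z^\sharp\beta^\sharp)^{k-1}$ with the correct sign. A more conceptual shortcut is to note that $\alpha=F(\beta)$ is uniquely determined by the identity $\alpha^\sharp=\beta^\sharp-\alpha^\sharp Z^\sharp\beta^\sharp$ (obtained by right-multiplying $(F(\beta))^\sharp(\id+Z^\sharp\beta^\sharp)=\beta^\sharp$), which together with the normalization $\psi_1=\id$ forces the Taylor coefficients of $\psi_*$ to coincide with those of $F$ inductively in $k$.

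Third, $\mathcal{C}^\infty$-convergence follows from straightforward estimates. Fixing a precompact open $U\subset M$ and $r\in\NN$, each $F_k(\beta)$ is a polynomial expression in $Z$ and $\beta$ obtained by $k{-}1$ contractions and wedgings, so Leibniz yields a bound of the form
\[
\|F_k(\beta)\|_{\mathcal{C}^r(U)} \le C_{r,U}\,\|Z\|_{\mathcal{C}^r(U)}^{k-1}\,\|\beta\|_{\mathcal{C}^r(U)}^{k},
\]
with $C_{r,U}$ a combinatorial constant depending only on $r$ and $U$. For $\beta$ sufficiently small in $\mathcal{C}^r(U)$ the series $\sum_k F_k(\beta)$ converges geometrically in $\mathcal{C}^r(U)$; letting $r$ and $U$ vary gives $\mathcal{C}^\infty$-convergence of $\psi_*(\beta)$ to $F(\beta)$. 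The main obstacle is the coefficient-matching in step two: it requires tracing through the explicit $BV_\infty$ construction of $\psi$ carefully, and the cleanest route is the fixed-point characterization above, which bypasses an enumeration of the tree combinatorics underlying $\psi$.
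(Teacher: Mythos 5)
Your overall strategy coincides with the paper's: expand $F(\beta)$ as the Neumann series $\sum_{k\ge 1}(-1)^{k-1}\beta^\sharp(Z^\sharp\beta^\sharp)^{k-1}$, match it term by term with $\sum_k \frac{1}{k!}\psi_k(\beta^{\odot k})$, and then observe geometric convergence. The gap is that the matching step --- which you correctly identify as the main obstacle --- is not actually carried out by either of the two routes you sketch. The direct route (``the symmetric sum over permutations collapses and the combinatorial factor exactly cancels the $1/k!$'') is an assertion, not an argument: since $\psi=e^{R_Z}$ with $R_Z$ a coderivation whose only nonzero Taylor coefficient is binary, $\psi_k$ is a sum over iterated insertions of a binary operation (a sum over binary trees with $k$ leaves), and showing that this collapses to a single monomial with the right sign and coefficient is precisely the content that needs proof. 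The fixed-point route has a logical gap: the identity $\alpha^\sharp=\beta^\sharp-\alpha^\sharp Z^\sharp\beta^\sharp$ characterizes $F(\beta)$ and determines \emph{its} Taylor coefficients inductively, but it says nothing about $\psi_*(\beta)$ unless you independently show that $\psi_*(\beta)$ satisfies the same recursion. The normalization $\psi_1=\mathrm{id}$ alone cannot force this.

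The paper supplies exactly the missing mechanism. First, a computation (Lemma \ref{lemma: sharp and the coderivation}) shows that for $2$-forms $\beta,\tilde\beta$ one has $\bigl(R_Z(\beta\odot\tilde\beta)\bigr)^\sharp=-(\beta^\sharp Z^\sharp\tilde\beta^\sharp+\tilde\beta^\sharp Z^\sharp\beta^\sharp)$. Second, writing $e^{tR_Z}e^{\beta}=e^{\alpha(t)}$ (which makes sense because a coalgebra automorphism sends group-like elements to group-like elements) and differentiating in $t$ yields the quadratic recursion $\dot\alpha(t)=R_Z\bigl(\tfrac{1}{2}\alpha(t)\odot\alpha(t)\bigr)$, whence $\alpha_j^\sharp=(-1)^j\beta^\sharp(Z^\sharp\beta^\sharp)^j$ by induction --- this is the rigorous version of your fixed-point idea, with the recursion for $\psi_*$ actually derived rather than postulated. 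If you want to salvage your proposal, this ODE argument is the cleanest way to do so. A smaller point: your convergence estimate uses $\|\beta\|_{\mathcal{C}^r(U)}^k$, which would require $\mathcal{C}^r$-smallness of $\beta$ for each $r$; to get $\mathcal{C}^\infty$-convergence from $\mathcal{C}^0$-smallness alone, as the proposition states, note that in the Leibniz expansion at most $r$ of the $k$ factors receive derivatives, so the bound is polynomial in $k$ times $\|\beta\|_{\mathcal{C}^0}^{k-r}$, which still decays geometrically.
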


As before, we postpone the proof of this result to Subsection \ref{subsection: proofs L-infty}. Let us demonstrate that {Proposition
\ref{proposition: algebraic map = geometric map}} yields a proof of Corollary \ref{corollary: MC for bivector fields}:
Suppose $\beta$ is a $2$-form which is sufficiently $\mathcal{C}^0$-small for Proposition \ref{proposition: algebraic map = geometric map} to apply.
We compute
\begin{eqnarray*}
dF(\beta) = d \psi_*(\beta) = \sum_{k\ge 1}\frac{1}{k!} d\psi_k(\beta^{\odot k}) = \sum_{k\ge 1} \frac{1}{k!}\psi_{k+1}(\beta^{\odot k}\odot (\lambda_1(\beta) + \frac{1}{2}\lambda_2(\beta\odot \beta) + \frac{1}{6}\lambda_3(\beta\odot \beta\odot \beta)).
\end{eqnarray*}
This shows that if $\beta$ satisfies the Maurer-Cartan equation, $F(\beta)$ is closed. To obtain the reverse implication,
one repeats the same line of arguments to the inverse of $\psi$.
 \\
 
\subsection{Proofs for Section \ref{subsection: Koszul for bivectors}.}
\label{subsection: proofs L-infty}

In this section we provide proofs of the statements from 
Section \ref{subsection: Koszul for bivectors} which involve $L_\infty$-algebras.
We refer to Appendix \ref{appendix: reminder L-infty} for background material.

\subsubsection{\underline{The proofs of Proposition \ref{proposition: extended Koszul} and Proposition \ref{proposition: extended Koszul 2}}}
\label{subsubsection: proofs of extended Koszul 1 and 2}
 
The proofs in this section rely on the calculus of differential operators on graded commutative algebras, and their associated Koszul brackets, see Appendix \ref{appendix: reminder L-infty}.
For us, the following example of differential operators is of central importance:

\begin{example}
Let $Y$ be a $k$-multivector field on $M$, i.e.~a section of $\Gamma(\wedge^k TM)$.
The insertion operator
$$ \iota_Y: \Omega^\bullet(M) \to \Omega^{\bullet-k}(M)$$
is a differential operator of order $\le k$ on $\Omega(M)$.
Since graded derivations are differential operators of order $\le 1$,
and since $[\Diffop_k(A),\Diffop_l(A)]\subset \Diffop_{k+l-1}(A)$, see Appendix \ref{appendix: BV},
we find that the Lie derivative
$$\Lie_Y = [\iota_Y,d]$$
is also a differential operator of order $\le k$ on $\Omega(M)$.
\end{example}

Let $A$ be a graded commutative, unital dg algebra.
Given an endomorphism $f$ of $A$, the {\em Koszul brackets} of $f$ are 
a sequence of multilinear operators
$\Koszul(f)_n: \odot^n A\to A$ defined iteratively by
$\Koszul(f)_1=f$ and
\begin{eqnarray*}
\Koszul(f)_n(a_1\odot \cdots \odot a_n) &=& +\Koszul(f)_{n-1}(a_1\odot \cdots \odot a_{n-2}\odot a_{n-1}a_n)\\
&& - \Koszul(f)_{n-1}(a_1\odot \cdots \odot a_{n-1})a_n\\
&&-(-1)^{|a_{n-1}||a_n|}\Koszul(f)_{n-1}(a_1\odot \cdots \odot a_{n-1}\odot a_n)a_{n-1},
\end{eqnarray*}
Using the natural identification $\Hom(\odot A,A)\cong \Coder(\odot A)$, this construction yields a morphism of dg Lie algebras
\begin{equation}\label{eq:cK}
 \Koszul: \mathrm{End}_*(A) \to \Coder(\odot A)
\end{equation}
from the dg Lie algebra of endomorphisms of $A$ which annihilate $1_A$, to the dg Lie algebra of coderivations
of the (reduced) symmetric coalgebra on $A$ (with the commutator bracket),
c.f.~\cite{Voronov1}.

\begin{lemma}\label{lemma: Koszul brackets for contraction}
Let $V$ be a finite-dimensional vector space.
Given $Y_1,\dots,Y_n \in V$, we consider $Y:=Y_1\wedge \cdots \wedge Y_n \in \wedge^n V$, and the corresponding differential operator
$$ \iota_Y :\wedge^\bullet V^* \mapsto \wedge^{\bullet-n} V^*$$
on the commutative graded algebra $\wedge V^*$.
For $r\le n$, the Koszul brackets of $\iota_Y$ are given by
$$ \Koszul(\iota_Y)_r(a_1\odot \cdots \odot a_r)=\sum_{i_1+\cdots +i_r=n} \sum_{\sigma \in S(i_1,\dots,i_r)}(-1)^\sharp (\iota_{Y_{\sigma(1)}}\cdots \iota_{Y_{\sigma(i_1)}}a_1)\cdots (\cdots \iota_{Y_{\sigma(n)}}a_r),$$
where we sum over all tuples $(i_1,\dots,i_r) \in \mathbb{Z}^r$ with
$i_1+\cdots+ i_r=n$ such that all $i_r \ge 1$,
$S(i_1,\dots,i_r)$ is the set of all $(i_1,\dots,i_r)$-unshuffles, and the sign is
$$ \sharp = |\sigma| + \sum_{p=2}^r i_p(|a_1|+\cdots + |a_{p-1}|).$$
\end{lemma}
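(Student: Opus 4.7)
The plan is to proceed by induction on $r$, using the iterative definition of the Koszul brackets recalled just before the lemma. The base case $r=1$ is immediate: the only admissible tuple with $i_1=n$ is $(n)$, the set of $(n)$-unshuffles consists of the identity alone, and the sign is trivial, so the formula reduces to $\Koszul(\iota_Y)_1(a_1)=\iota_{Y_1}\cdots\iota_{Y_n}a_1=\iota_Y a_1$, which holds by the standard convention for the contraction with a decomposable multivector.

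For the inductive step I would apply the hypothesis to each of the three terms in the recursion
$$\Koszul(\iota_Y)_r(a_1\odot\cdots\odot a_r)=\Koszul(\iota_Y)_{r-1}(a_1\odot\cdots\odot a_{r-2}\odot a_{r-1}a_r)-\Koszul(\iota_Y)_{r-1}(a_1\odot\cdots\odot a_{r-1})a_r-(-1)^{|a_{r-1}||a_r|}\Koszul(\iota_Y)_{r-1}(a_1\odot\cdots\odot a_{r-2}\odot a_r)a_{r-1}.$$
The crucial step is the expansion of the first term. By the inductive hypothesis it equals a sum over tuples $(j_1,\ldots,j_{r-1})$ with $j_1+\cdots+j_{r-1}=n$ and $(j_1,\ldots,j_{r-1})$-unshuffles $\tau$ of expressions in which the last group of contractions is $\iota_{Y_{\tau(n-j_{r-1}+1)}}\cdots\iota_{Y_{\tau(n)}}$ acting on $a_{r-1}\wedge a_r$. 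Since each $\iota_{Y_j}$ is an odd derivation of $\wedge V^*$, iterated application of the graded Leibniz rule distributes these $j_{r-1}$ contractions over $a_{r-1}$ and $a_r$, yielding a sum over all ways of splitting the block of size $j_{r-1}$ into two ordered subblocks of sizes $i_{r-1}$ and $i_r$ with $i_{r-1}+i_r=j_{r-1}$.

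Collecting the resulting terms by their bisplit, the extreme cases $i_r=0$ and $i_{r-1}=0$ precisely cancel the second and third summands of the recursion, respectively, by the inductive hypothesis. What survives are all splittings with both $i_{r-1}\geq 1$ and $i_r\geq 1$; reindexing the refined unshuffles $\sigma$ obtained from $\tau$ by further shuffling the last block into two increasing blocks produces exactly the $(i_1,\ldots,i_r)$-unshuffles of the claimed formula, with $(i_1,\ldots,i_r)=(j_1,\ldots,j_{r-2},i_{r-1},i_r)$.

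The main obstacle will be the Koszul sign bookkeeping. Three independent sources contribute: (i) the unshuffle signs $|\tau|$ from the inductive hypothesis combine with the sign of the extra shuffle that separates the last block of size $j_{r-1}$ into two increasing blocks, yielding $|\sigma|$; (ii) the graded Leibniz rule for $\iota_{Y_j}$ generates a factor $(-1)^{|a_{r-1}|}$ each time a contraction is sent past $a_{r-1}$ to act on $a_r$, producing a total of $(-1)^{i_r|a_{r-1}|}$, which fits into the summand $i_r(|a_1|+\cdots+|a_{r-1}|)$ of $\sharp$; (iii) the explicit $(-1)^{|a_{r-1}||a_r|}$ in the recursion is absorbed by the same rearrangement when matching the cancellation against the third summand. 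Tracking these three contributions carefully and verifying that they assemble into the stated exponent $\sharp=|\sigma|+\sum_{p=2}^{r}i_p(|a_1|+\cdots+|a_{p-1}|)$ is the only delicate but routine part of the argument.
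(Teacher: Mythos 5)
Your proposal is correct and follows essentially the same route as the paper: induction on $r$, with the inductive step consisting in checking that the claimed formula satisfies the defining recursion of the Koszul brackets (the paper simply calls this verification ``a straightforward computation''). Your expansion via the graded Leibniz rule for the odd derivations $\iota_{Y_j}$, the cancellation of the extreme splittings $i_r=0$ and $i_{r-1}=0$ against the second and third terms of the recursion, and the sign bookkeeping are exactly the content of that computation.
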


\begin{proof}
The proof proceeds by induction on $r$. 
For $r=1$, the claimed formula reads
$$\Koszul(\iota_Y)_1(a)=\iota_{Y_1}\cdots \iota_{Y_n} a,$$
which -- by our conventions, see Appendix \ref{appendix: reminder L-infty} -- indeed equals $\iota_Y a$.

We now assume that we verified the formula for all $r\ge 1$.
Our task is to show that for $r+1$, the right-hand side 
of the claimed equality satisfies the same recursion as
$\Koszul(\iota_Y)_{r+1}$.
This is a straightforward computation.
\end{proof}

\begin{remark}\label{remark: Koszul}
We will be particularly interested in the special case $n=r$, where the formula specializes to
$$\Koszul(\iota_Y)_n(a_1\odot \cdots \odot a_n):= (-1)^{(n-1)|a_1|+(n-2)|a_2|+\cdots + |a_{n-1}|} (a_1^\sharp \wedge \cdots \wedge a_n^\sharp)(Y),$$
using the notation of Subsection \ref{subsection: Koszul for bivectors}.
\end{remark}

We now provide a proof of Proposition \ref{proposition: extended Koszul}, relying on a
general construction from \cite{Kravchenko,Voronov1}.
Let $Z$ be an arbitrary bivector field on $M$. Cartan calculus -- which we recap in Appendix \ref{sec:Cartan} -- implies that the operator
$$\Delta_Z:= d - t \Lie_Z - t^2 \frac{1}{2} \iota_{[Z,Z]}$$
on $\Omega(M)[[t]]$, $t$ a formal variable of degree $2$, squares to zero. In fact, $\Delta_Z$ equips $\Omega(M)$ with the structure of a commutative $BV_\infty$-algebra of degree $1$ in the sense of \cite{Kravchenko}, {which amounts to the fact that $d$ is a derivation, $\Lie_Z$ is a differential operator of order $\le 2$,
and $\iota_{[Z,Z]}$ is a differential operator of order $\le 3$.}
This $BV_\infty$-algebra structure on $\Omega(M)$ was considered by Fiorenza-Manetti
in the case of $Z$ being a Poisson bivector field, cf.~\cite{Fiorenza-Manetti},
and by Dotsenko-Shadrin-Valette in the case of $Z$ being a Jacobi bivector field, cf.~\cite{DSV}.

\begin{proof}[Proof of Proposition \ref{proposition: extended Koszul}]
As noted above, the operator
$$ \Delta_Z=d - t\Lie_Z - t^2\frac{1}{2}\iota_{[Z,Z]}$$
equips $\Omega(M)$ with the structure of a commutative $BV_\infty$-algebra.
Proposition \ref{proposition: Koszul brackets} then asserts that the sequence of operations
$$(d,\Koszul(-\Lie_Z)_2,\Koszul(-\frac{1}{2}\iota_{[Z,Z]})_3)$$
equips $\Omega(M)[2]$ with the structure of an $L_\infty[1]$-algebra.
The structure maps $\lambda_1$ and $\lambda_2$ are easy to match with $d$ and $\Koszul(\Lie_Z)_2$, respectively.
Concerning $\lambda_3$, we know by Remark \ref{remark: Koszul} that
$$ \Koszul(-\frac{1}{2}\iota_{[Z,Z]})_3(\alpha\odot \beta\odot\gamma)= (-1)^{|\beta|+1}(\alpha^\sharp \wedge \beta^\sharp \wedge \gamma^\sharp)(\frac{1}{2}[Z,Z]),$$
which concludes the proof.
\end{proof}

We next turn to Proposition \ref{proposition: extended Koszul 2}.
We consider now the second Koszul bracket associated to the contraction by the bivector field $Z$,
$$\Koszul(\iota_Z)_2: \Omega(M)^{\odot 2} \to \Omega(M)$$
 One can extend $\Koszul(\iota_Z)_2$ in a unique way to a coderivation $R_Z$ of $\bigodot(\Omega(M)[2])$, and as such it has degree zero.
Since $R_Z$ acts in a pro-nilpotent manner on $\bigodot(\Omega(M)[2])$, it integrates to an automorphism $e^{R_Z}$ of the graded coaugmented coalgebra $\bigodot(\Omega(M)[2])$.

We thank Ruggero Bandiera for helpful conversations which led to the following proof.

\begin{proof}[Proof of Proposition \ref{proposition: extended Koszul 2}]
We claim that $\psi:=e^{R_Z}$ defines an $L_\infty[1]$-isomorphism
$$ \psi: (\Omega(M)[2],\lambda_1,\lambda_2,\lambda_3) \to (\Omega(M)[2],\lambda_1),$$
with inverse given by $e^{-R_Z}$.
Equivalently, we can verify that the coderivation
$$ e^{-R_Z}\circ \widehat{\lambda}_1 \circ e^{R_Z}$$
corresponds to $(\lambda_1,\lambda_2,\lambda_3)$, where  $\widehat{\lambda}_1$ is the coderivation of $\bigodot(\Omega(M)[2])$ extending $\lambda_1$.
Notice that
$$ e^{-R_{Z}}\circ \widehat{\lambda}_1 \circ e^{R_{Z}} = \widehat{\lambda}_1 - [R_{Z},\widehat{\lambda}_1] + \frac{1}{2}[R_{Z},[R_{Z},\widehat{\lambda}_1]] - \frac{1}{3!}[R_{Z},[R_{Z},[R_{Z},\widehat{\lambda}_1]]]]+\cdots,$$
where $[\cdot,\cdot]$ denotes the commutator bracket, 
i.e.~the $(k+1)$-th structure map of $ e^{-R_Z}\circ \widehat{\lambda}_1 \circ e^{R_Z}$ can be read off from
$$ \frac{1}{k!} (-1)^k \mathrm{ad}(R_{Z})^k\widehat{\lambda}_1.$$

We compute the element corresponding to $ - [R_{Z},\widehat{\lambda}_1] $ under the natural identification $\Hom(\odot A,A)\cong \Coder(\odot A)$. 
Using the fact that the map \eqref{eq:cK} respects commutator brackets, and that $d$ is a derivation, while $\iota_Z$ is differential operators of order $\le 2$, we obtain
\begin{eqnarray*}
-[\Koszul(\iota_Z)_2,\Koszul(d)_1]=-\Koszul([\iota_Z,d])_2=-\Koszul(\Lie_Z)_2=\lambda_2.
\end{eqnarray*}
In the same manner -- this time also using that $\Lie_Z$ is a differential operator of order $\le 2$ -- we find
\begin{eqnarray*}
\frac{1}{2}[R_{Z},[R_{Z},\widehat{\lambda}_1]]=\frac{1}{2}[\Koszul(\iota_Z)_2,\Koszul(\Lie_Z)_2]=\frac{1}{2}\Koszul([\iota_Z,\Lie_Z])_3=-\frac{1}{2}\Koszul(\iota_{[Z,Z]})_3=\lambda_3.
\end{eqnarray*}
Finally, we find
\begin{eqnarray*}
\frac{1}{6}[\Koszul(\iota_Z)_2,-\frac{1}{2}\Koszul(\iota_{[Z,Z]})_3]=
\frac{1}{12}\Koszul([\iota_Z,-\iota_{[Z,Z]}])_4
=0,
\end{eqnarray*}
where we made use of the fact that contraction operators 
commute in the graded sense.
\end{proof}

\begin{remark}
{An alternative proof, pointed out to us by V.~Dotsenko, proceeds by noticing that the operator
$$ \Delta_Z=d - t\Lie_Z - t^2\frac{1}{2}\iota_{[Z,Z]}$$
is conjugate to $d$ via the automorphism of $\Omega(M)[[t]]$
generated by $t\iota_Z$. In the case of $Z$ being Poisson,
this was previously observed by Fiorenza-Manetti in \cite{Fiorenza-Manetti},
and, in the case of $Z$ being Jacobi, by Dotsenko-Shadrin-Valette in \cite{DSV}.}

{
Combining this observation with the fact that the higher Koszul brackets 
are compatible with the commutator brackets, see \eqref{eq:cK},
yields Proposition \ref{proposition: extended Koszul 2}.}
\end{remark}

\subsubsection{\underline{The proof of Proposition \ref{proposition: algebraic map = geometric map}}}
\label{subsubsection: proof of algebraic = geometric}

As we just saw, the $L_\infty[1]$-isomorphism
$\psi: (\Omega(M)[2],\lambda_1,\lambda_2,\lambda_3)\to (\Omega(M)[2],\lambda_1)$
is given by $e^{R_Z}$, with the coderivation $R_Z$ determined by
$$ R_Z(\alpha[2]\odot\beta[2]) = \Big(\iota_Z(\alpha\wedge \beta) - \iota_Z(\alpha)\wedge \beta - \alpha\wedge \iota_Z(\beta)\Big)[2].$$
Notice that this equals the second Koszul bracket $\Koszul_2(\iota_Z)$ of $\iota_Z$, see Section \ref{subsubsection: proofs of extended Koszul 1 and 2}.

We are interested in $e^{R_Z}(e^{\beta[2]})$ for $\beta\in \Omega^2(M)$. 
Since $e^{R_Z}$ is an automorphism of the coalgebra $\bigodot(\Omega(M)[2])$
it maps $e^{\beta[2]}$ to an element of the form $e^{\alpha[2]}$. Our aim is to derive a formula for $\alpha$.

As a preparation, we prove the following

\begin{lemma}\label{lemma: sharp and the coderivation}
For $\beta$, $\tilde{\beta}\in \Omega^2(M)$, the $2$-form
$ \gamma[2]:=R_Z(\beta[2]\odot \tilde{\beta}[2])$
is determined by $$\gamma^\sharp = -\left(\beta^\sharp Z^\sharp \tilde{\beta}^\sharp + \tilde{\beta}^\sharp Z^\sharp \beta^\sharp\right).$$
\end{lemma}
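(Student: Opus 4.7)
The plan is to unwind the explicit definition of $R_Z$ recalled just above the statement, and then reduce the verification to a pointwise computation with $Z$ decomposable. From
$$ R_Z(\beta[2]\odot \tilde\beta[2]) = \bigl(\iota_Z(\beta\wedge\tilde\beta) - \iota_Z(\beta)\wedge \tilde\beta - \beta\wedge \iota_Z(\tilde\beta)\bigr)[2],$$
we see that $\gamma$ is exactly the second Koszul bracket $\Koszul(\iota_Z)_2(\beta\odot\tilde\beta)$ of the operator $\iota_Z$. Both sides of the claimed formula are $\smooth^{\infty}(M)$-linear in $Z$, $\beta$, $\tilde\beta$, so the identity is pointwise, and by bilinearity in $Z$ we may fix $p\in M$ and assume $Z = X\wedge Y$ for some $X,Y\in T_pM$.

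Next, I would unpack $\iota_Z$ using the convention $\iota_{X\wedge Y}=\iota_X\iota_Y$ (as in the proof of Lemma~\ref{lemma: Koszul brackets for contraction}) and apply the graded Leibniz rule for $\iota_X$ and $\iota_Y$ to $\beta\wedge\tilde\beta$. Since $\beta$ and $\tilde\beta$ are of even degree, the expansion produces the two "doubly contracted" terms $(\iota_X\iota_Y\beta)\,\tilde\beta$ and $\beta\,(\iota_X\iota_Y\tilde\beta)$, which cancel against $(\iota_Z\beta)\wedge\tilde\beta$ and $\beta\wedge\iota_Z(\tilde\beta)$, together with two mixed terms. What remains is
$$\gamma \;=\; \beta^{\sharp}(X)\wedge \tilde\beta^{\sharp}(Y)\;-\;\beta^{\sharp}(Y)\wedge \tilde\beta^{\sharp}(X),$$
where $\beta^{\sharp}(X)=\iota_X\beta$, etc. (This also matches Remark~\ref{remark: Koszul} for $n=r=2$, which gives $\gamma=(\beta^\sharp\wedge \tilde\beta^\sharp)(Z)$ with sign $(-1)^{|\beta|}=+1$.)

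To finish, I contract with an arbitrary $v\in T_pM$ and compare with the right-hand side. Using $\iota_v(\xi\wedge\eta)=\xi(v)\eta-\eta(v)\xi$ for $1$-forms, one obtains
$$\iota_v\gamma \;=\; \beta(X,v)\,\tilde\beta^{\sharp}(Y)-\tilde\beta(Y,v)\,\beta^{\sharp}(X)-\beta(Y,v)\,\tilde\beta^{\sharp}(X)+\tilde\beta(X,v)\,\beta^{\sharp}(Y).$$
On the other hand, for $Z=X\wedge Y$ one has $Z^{\sharp}\xi=\xi(X)Y-\xi(Y)X$, so plugging $\xi=\tilde\beta^{\sharp}(v)=\tilde\beta(v,\cdot)$ and then applying $\beta^{\sharp}$ (and symmetrically with the roles of $\beta,\tilde\beta$ exchanged) gives
$$\beta^{\sharp}Z^{\sharp}\tilde\beta^{\sharp}(v)+\tilde\beta^{\sharp}Z^{\sharp}\beta^{\sharp}(v) \;=\; -\tilde\beta(X,v)\,\beta^{\sharp}(Y)+\tilde\beta(Y,v)\,\beta^{\sharp}(X)-\beta(X,v)\,\tilde\beta^{\sharp}(Y)+\beta(Y,v)\,\tilde\beta^{\sharp}(X),$$
which is precisely $-\iota_v\gamma$. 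This establishes the identity.

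The only delicate point is the sign bookkeeping: one must be careful with the convention for $\iota_Z$ on a bivector $Z$, with the Koszul signs coming from the shift $[2]$, and with the sharp conventions for $\beta$ and $Z$. The rest is routine application of the graded Leibniz rule, which is why reducing to $Z=X\wedge Y$ before expanding is the cleanest path.
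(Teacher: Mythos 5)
Your proof is correct and follows essentially the same route as the paper: reduce to a decomposable bivector $Z=X\wedge Y$, identify $\gamma$ with the second Koszul bracket $\Koszul(\iota_Z)_2(\beta\odot\tilde\beta)=\iota_X\beta\wedge\iota_Y\tilde\beta-\iota_Y\beta\wedge\iota_X\tilde\beta$, and then verify the identity by evaluating on tangent vectors. The only cosmetic difference is that you re-derive the formula for $\Koszul(\iota_Z)_2$ from the graded Leibniz rule rather than quoting Lemma~\ref{lemma: Koszul brackets for contraction}, and you contract with a single vector $v$ where the paper evaluates on a pair $(v_1,v_2)$; both sign computations check out.
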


\begin{proof}
Let us fix two $2$-forms $\beta$ and $\tilde{\beta}$.
 We assume without loss of generality that $Z=Z_1\wedge Z_2$
 and we know that $R_Z(\beta[2]\odot \tilde{\beta}[2])=\Koszul_2(\iota_Z)(\beta\odot \tilde{\beta})[2]$. 
By Lemma \ref{lemma: Koszul brackets for contraction}, we have
$$ \gamma= \Koszul(\iota_Z)_2(\beta\odot \tilde{\beta})= \iota_{Z_1}\beta \wedge \iota_{Z_2}\tilde{\beta} - \iota_{Z_2}\beta \wedge \iota_{Z_1}\tilde{\beta}$$
and therefore
$$ \gamma(v_1,v_2)=\beta(Z_1,v_1)\tilde{\beta}(Z_2,v_2) -
\beta(Z_1,v_2)\tilde{\beta}(Z_2,v_1) -
\beta(Z_2,v_1)\tilde{\beta}(Z_1,v_2) +
\beta(Z_2,v_2)\tilde{\beta}(Z_1,v_1).$$
On the other hand, if we evaluate the bilinear skew-symmetric form corresponding to the map $-\beta^\sharp Z^\sharp \tilde{\beta}^\sharp$ on two vectors $v_1$, $v_2 \in V$, we find
$$
-\langle \beta^\sharp Z^\sharp \tilde{\beta}^\sharp(v_1),v_2\rangle = \beta(Z_2,v_2)\tilde{\beta}(Z_1,v_1) - \beta(Z_1,v_2)\tilde{\beta}(Z_2,v_1),$$
and similarly when one switches $\beta$ and $\tilde{\beta}$.
This finally yields
$$\gamma(v_1,v_2) = \langle - \left(\beta^\sharp Z^\sharp \tilde{\beta}^\sharp + \tilde{\beta}^\sharp Z^\sharp \beta^\sharp \right)(v_1),v_2 \rangle,$$
which was our claim.
\end{proof}

We introduce a formal parameter $t$ and define $\alpha(t) \in \Omega^2(M)[[t]]$
by
\begin{equation}\label{equation: ODE} e^{tR_Z}e^{\beta}=:e^{\alpha(t)}.
\end{equation}
We write $\alpha(t)=\sum_{j=0}^\infty \alpha_j t^j$.

\begin{lemma}\label{lemma: sharp and the coderivation 2}
The coefficients $\alpha_j\in \Omega^2(M)$ of $\alpha(t)$ are determined by
$\alpha_j^\sharp = (-1)^j\beta^\sharp (Z^\sharp \beta^\sharp)^{j}.$ 
\end{lemma}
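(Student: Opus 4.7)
The plan is to differentiate the defining equation \eqref{equation: ODE} in $t$ and convert it into an ordinary differential equation for the sharp map $\alpha^\sharp(t)$, which I will then solve explicitly.

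First I would observe that $R_Z$ is a coderivation of $\bigodot(\Omega(M)[2])$ whose only non-trivial cogenerating component is the quadratic operation $R_Z\colon \odot^2 \to \cdot$ studied in Lemma \ref{lemma: sharp and the coderivation}. A standard computation shows that when a coderivation of this shape is applied to a group-like element $e^{x}$, the result is $\tfrac{1}{2}R_Z(x\odot x)\odot e^{x}$. Differentiating \eqref{equation: ODE} and matching against $\dot\alpha(t)\odot e^{\alpha(t)}$ yields the ODE
\begin{equation*}
\dot\alpha(t) \;=\; \tfrac{1}{2}\,R_Z\bigl(\alpha(t)\odot \alpha(t)\bigr),\qquad \alpha(0)=\beta.
\end{equation*}

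Next I would translate this equation into one for $\alpha^\sharp$. By Lemma \ref{lemma: sharp and the coderivation}, applied to the case $\tilde{\beta}=\alpha(t)$, we get
\begin{equation*}
\bigl(\tfrac{1}{2}R_Z(\alpha(t)\odot \alpha(t))\bigr)^\sharp \;=\; -\,\alpha^\sharp(t)\,Z^\sharp\,\alpha^\sharp(t),
\end{equation*}
so the ODE becomes $\dot\alpha^\sharp(t) = -\alpha^\sharp(t)\,Z^\sharp\,\alpha^\sharp(t)$ with initial value $\alpha^\sharp(0)=\beta^\sharp$. I would then verify by direct differentiation that the (formal) solution is
\begin{equation*}
\alpha^\sharp(t) \;=\; \beta^\sharp\bigl(\mathrm{id}+tZ^\sharp\beta^\sharp\bigr)^{-1} \;=\; \sum_{j\ge 0}(-1)^j\,\beta^\sharp(Z^\sharp\beta^\sharp)^j\,t^j,
\end{equation*}
since then $\dot\alpha^\sharp = -\beta^\sharp(\mathrm{id}+tZ^\sharp\beta^\sharp)^{-1}Z^\sharp\beta^\sharp(\mathrm{id}+tZ^\sharp\beta^\sharp)^{-1} = -\alpha^\sharp Z^\sharp\alpha^\sharp$. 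Reading off the coefficient of $t^j$ gives $\alpha_j^\sharp = (-1)^j\beta^\sharp(Z^\sharp\beta^\sharp)^j$, as claimed.

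The step I would expect to require the most care is the passage from the coderivation $R_Z$ acting on the cofree cocommutative coalgebra to the quadratic ODE for $\alpha(t)$: one needs to be precise about signs (all inputs live in $\Omega(M)[2]$, but $\beta$ is of even degree $2$ there so Koszul signs are trivial here), and about the fact that $R_Z$ has no unary or higher components so only the binary term survives when hitting $e^{\alpha(t)}$. Once the ODE $\dot\alpha^\sharp = -\alpha^\sharp Z^\sharp\alpha^\sharp$ is in hand, the remainder is a one-line verification of the geometric series solution, or alternatively an induction on $j$ reading the recursion $\alpha_{n+1} = -\tfrac{1}{n+1}\sum_{k=0}^{n}\alpha_k^\sharp Z^\sharp \alpha_{n-k}^\sharp$ off the ODE and matching with $(-1)^{n+1}\beta^\sharp(Z^\sharp\beta^\sharp)^{n+1}$ by the telescoping identity $\sum_{k=0}^{n}(-1)^k(-1)^{n-k} = n+1$.
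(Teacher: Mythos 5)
Your proposal is correct and follows essentially the same route as the paper: differentiate $e^{tR_Z}e^{\beta}=e^{\alpha(t)}$, project to the cogenerators to get $\dot{\alpha}(t)=\tfrac{1}{2}R_Z(\alpha(t)\odot\alpha(t))$, and convert via Lemma \ref{lemma: sharp and the coderivation} into the quadratic equation for $\alpha^\sharp(t)$. The only (cosmetic) difference is that you verify the closed-form solution $\beta^\sharp(\mathrm{id}+tZ^\sharp\beta^\sharp)^{-1}$ directly, whereas the paper runs the induction on the coefficient recursion --- which you also supply as an alternative, with the correct telescoping sign count.
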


\begin{proof}
The statement is obviously true for $j=0$.
Now suppose we proved the statement for all $i<j$ already.
Differentiating both sides of Equation \eqref{equation: ODE} with respect to $t$ leads to
$$ R_Z(e^{tR_Z}e^{\beta}) = R_Z(e^{\alpha(t)})=\dot{\alpha}(t)\odot e^{\alpha(t)}.$$
 After applying the projection $\bigodot \Omega(M)[2]\to \Omega(M)[2]$,
one obtains $R_Z({\frac{1}{2}\alpha(t)}\odot {\alpha(t)})=\dot{\alpha}(t)$.  Comparing the coefficients of $t^{j-1}$,
we find the equation
$$ \alpha_{j} = \frac{1}{j}R_Z\Big(\frac{1}{2} \sum_{r+s=j-1} \alpha_r \odot \alpha_s\Big).$$
Applying induction and Lemma \ref{lemma: sharp and the coderivation},
this yields
\begin{eqnarray*}
\alpha_{j}^\sharp &=& \frac{1}{j}(-1)^j\sum_{0\le i\le j-1}\beta^\sharp(Z^\sharp\beta^\sharp)^{i}Z^\sharp\beta^\sharp(Z^\sharp\beta^\sharp)^{j-i-1} \\
&=& (-1)^j \beta^\sharp(Z^\sharp\beta^\sharp)^j,
\end{eqnarray*}
which finishes the proof.
\end{proof}

\begin{proof}[Proof of Proposition \ref{proposition: algebraic map = geometric map}]
Setting $t=1$ in Equation \eqref{equation: ODE} and applying the projection $\bigodot \Omega(M)[2]\to \Omega(M)[2]$ we find
 $\alpha=\alpha(1)=\sum_{j\ge 0} \alpha_j$.
By Lemma \ref{lemma: sharp and the coderivation 2}, we know that
$$(\psi_*(\beta))^\sharp =\alpha^\sharp= \sum_{j\ge 0}\alpha_j=\sum_{j\ge 0}(-1)^j\beta^\sharp(Z^\sharp\beta^\sharp)^j.$$
For $\beta$ sufficiently small with respect to the $\mathcal{C}^0$-topology,
this formal series converges uniformly with respect to the $\mathcal{C}^\infty$-topology to $\beta^\sharp(\mathrm{id}+Z^\sharp\beta^\sharp)^{-1}$.
 The latter expression coincides with $(F (\beta))^\sharp$, see Equation \eqref{eq:alphabeta}.

\end{proof}

\subsection{The Koszul $L_\infty$-algebra of a pre-symplectic manifold}
\label{subsection: Koszul for pre-symplectic}

We return to the pre-symplectic setting, i.e.~suppose $\eta$ is a pre-symplectic structure on $M$.
Let us fix a complementary subbundle $G$ to the kernel $K\subset TM$
of $\eta$
and let $Z$ be the bivector field on $M$ determined by
$ Z^\sharp = -(\eta\vert_G^\sharp)^{-1}$.

\begin{theorem}\label{theorem: construction - Koszul L-infty}
The $L_{\infty}[1]$-algebra structure  
 on $\Omega(M)[2]$ associated to the bivector field $Z$, see Proposition \ref{proposition: extended Koszul},
maps $\Omega_\hor(M)[2]$ to itself.
The subcomplex $\Omega_\hor(M)[2] \subset \Omega(M)[2]$ therefore inherits the structure of an
$L_\infty[1]$-algebra, which we call the {\em Koszul $L_\infty[1]$-algebra} of $(M,\eta)$. Moreover, we call the corresponding $L_\infty$-algebra structure on $\Omega_\hor(M)[1]$, see Appendix \ref{appendix: reminder L-infty}, the {\em Koszul $L_\infty$-algebra} of $(M,\eta)$.
\end{theorem}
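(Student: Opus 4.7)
The plan is to verify that each of the three structure maps $\lambda_1$, $\lambda_2$, $\lambda_3$ of Proposition \ref{proposition: extended Koszul} restricts to the graded subspace $\Omega_\hor(M)[2]\subset\Omega(M)[2]$; once this is established, the $L_\infty[1]$-identities on the subspace follow automatically from those on $\Omega(M)[2]$. Closure under $\lambda_1=d$ is immediate from the short exact sequence \eqref{eq:ses}: involutivity of $K$ (a consequence of $d\eta=0$) makes the restriction map $r\colon\Omega(M)\to\Omega(K)$ a chain map, so its kernel $\Omega_\hor(M)$ is a subcomplex of $(\Omega(M),d)$.

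For the binary bracket $\lambda_2$, which is up to a sign the Koszul bracket $[\cdot,\cdot]_Z$, I would exploit the fact that $\Omega_\hor(M)$ is the wedge-ideal generated by $\Gamma(K^\circ)$ and use the derivation property in each argument (Remark \ref{rem:Kos2der}). An iterated application of the Leibniz rules expresses $[\omega_1,\omega_2]_Z$ for $\omega_1,\omega_2\in\Omega_\hor(M)$ as a sum of terms each of which either still contains an explicit $K^\circ$-factor in the wedge --- hence is horizontal by the ideal property --- or reduces to $[\xi_1,\xi_2]_Z$ for $\xi_1,\xi_2\in\Gamma(K^\circ)=\Gamma(G^*)$ wedged with other horizontal factors. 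The base case $[\Gamma(G^*),\Gamma(G^*)]_Z\subset\Gamma(G^*)\subset\Omega_\hor(M)$ is precisely Lemma \ref{lem:g*g*g*}, which closes the argument.

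For the trinary bracket $\lambda_3$, the cleanest route is to show that it in fact vanishes identically on horizontal inputs. By Remark \ref{rem:nokkk} one has $[Z,Z]\in\Gamma(\wedge^3 G)\oplus\Gamma(\wedge^2 G\otimes K)$, and Lemma \ref{lem:noggg} rules out the $\wedge^3 G$ component, so $[Z,Z]\in\Gamma(\wedge^2 G\otimes K)$; every summand in any local expansion has exactly one factor in $\Gamma(K)$. On the other hand, if $\alpha\in\Omega_\hor(M)$ then $\alpha^\sharp(X)=\iota_X\alpha=0$ for every $X\in\Gamma(K)$. Unwinding the anti-symmetrized definition of $\alpha^\sharp\wedge\beta^\sharp\wedge\gamma^\sharp$ applied to a local term of $[Z,Z]$, each resulting summand evaluates one of the sharp maps on a $K$-valued vector and therefore vanishes. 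Hence $\lambda_3(\alpha[2]\odot\beta[2]\odot\gamma[2])=0$ whenever $\alpha,\beta,\gamma\in\Omega_\hor(M)$, and in particular $\lambda_3$ preserves $\Omega_\hor(M)[2]$.

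The obstacle I anticipate is organizational rather than conceptual: making the inductive reduction for $\lambda_2$ sufficiently careful (in particular, tracking how scalar function factors interact with the Koszul bracket via the Leibniz rules) and keeping Koszul signs straight in the trinary argument. All the essential geometric input --- Lemma \ref{lem:g*g*g*}, Lemma \ref{lem:noggg}, and Remark \ref{rem:nokkk} --- has already been established in Section \ref{section: pre-symplectic structures}, so no further preparation is required.
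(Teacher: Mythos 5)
Your treatment of $\lambda_1$ and $\lambda_2$ matches the paper's proof: closedness under $d$ comes from the exact sequence \eqref{eq:ses}, and closedness under the Koszul bracket is reduced, via the biderivation property, to the statement that $\Gamma(K^\circ)$ is closed under $[\cdot,\cdot]_Z$, which is Lemma \ref{lem:g*g*g*}. The problem is in your argument for $\lambda_3$, which contains a genuine error. You claim that $\iota_X\alpha=0$ for every $X\in\Gamma(K)$ and every $\alpha\in\Omega_\hor(M)$. This is false: horizontality means that the \emph{full} restriction of $\alpha$ to $\wedge K$ vanishes, i.e.\ $\Omega_\hor(M)=\Gamma(\wedge^{\geq 1}G^*\otimes\wedge K^*)$, not that a single contraction with a $K$-vector kills $\alpha$. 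For instance $\mu\in\Gamma(K^*\otimes G^*)\subset\Omega^2_\hor(M)$ satisfies $\iota_X\mu=\mu(X,\cdot)\in\Gamma(G^*)$, which is nonzero in general. Consequently your conclusion that $\lambda_3$ vanishes identically on horizontal inputs is also false; this is visible later in the paper, where Proposition \ref{prop:interthree} and Lemma \ref{lem:Cartancalc} show that $q\circ\lambda_3$ equals $l_3$ on horizontal elements, and $l_3$ is nonzero whenever $G$ is not involutive. (If $\lambda_3$ did vanish on $\Omega_\hor(M)$, the Koszul $L_\infty$-algebra would always reduce to a dg Lie algebra, which it does not.)

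The correct argument is close to what you wrote but uses the ideal structure rather than a vanishing claim. Write a local summand of $[Z,Z]$ as $Y_1\wedge Y_2\wedge W$ with $Y_1,Y_2\in\Gamma(G)$ and $W\in\Gamma(K)$; this is legitimate precisely because Lemma \ref{lem:noggg} kills the $\wedge^3G$ component (Remark \ref{rem:nokkk} alone would not suffice). Each term of $(\alpha^\sharp\wedge\beta^\sharp\wedge\gamma^\sharp)(Y_1\wedge Y_2\wedge W)$ is, up to sign, a wedge product $\iota_{u_{\sigma(1)}}\alpha\wedge\iota_{u_{\sigma(2)}}\beta\wedge\iota_{u_{\sigma(3)}}\gamma$ in which exactly one factor is contracted with $W\in\Gamma(K)$. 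Contraction with a $K$-vector preserves the number of $G^*$-factors, so that factor still lies in $\Gamma(\wedge^{\geq1}G^*\otimes\wedge K^*)=\Omega_\hor(M)$; since $\Omega_\hor(M)$ is a wedge-ideal, the whole term is horizontal. This is the paper's argument, and it yields membership in $\Omega_\hor(M)$ (indeed in $F^2\Omega(M)$ when all inputs are horizontal, cf.\ Lemma \ref{lem:degrees}), not vanishing.
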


\begin{proof}

We have to show that $\Omega_\hor(M)[2]$ is preserved by the operations $\lambda_1$, $\lambda_2$ and $\lambda_3$
as defined in Proposition \ref{proposition: extended Koszul}.
Notice that these structure maps  operate in a local manner on $\Omega(M)$. 

We already know that $\Omega_\hor(M)$ is a subcomplex of $\Omega(M)$, so the claim is clear for $\lambda_1$.

For the binary map $\lambda_2$, we notice that it suffices to show that
$\Gamma(K^\circ)$ is closed under the Koszul bracket; the reasons being that $\Omega_\hor(M)$
is the ideal generated by $\Gamma(K^\circ)$ and that the Koszul bracket is a derivation in each argument. We already showed in Lemma \ref{lem:g*g*g*}, Section \ref{section: bivec}, that $\Gamma(K^\circ)$ is closed under the Koszul bracket.

To see that $\lambda_3$ maps $\Omega_\hor(M)[2]^{\odot 3}$ to $\Omega_\hor(M)[2]$, recall that
$\Omega_\hor(M)=\Gamma(\wedge^{\ge 1}G^*\otimes \wedge K^*)$ and that the component of $[Z,Z]$ in $\Gamma(\wedge^3 G)$ vanishes by Lemma \ref{lem:noggg} in Section \ref{section: bivec}.
\end{proof}

\begin{definition}\label{definition: MC for pre-symplectic}
We denote by $\MC(\eta)$ the set of Maurer-Cartan elements of
the Koszul \lione of $(M,\eta)$.
\end{definition}

Recall from Section \ref{subsection: Dirac parametrization}
that $\mathcal{I}_Z$ denotes the subset of those elements $\beta$ of
$\wedge^2 T^*M$, for which $\mathrm{id}+Z^\sharp\beta^{\sharp}$ is invertible.
Recall further that we constructed a map
\begin{align}\label{exp-recalled} \exp_\eta: \mathcal{I}_Z\to \wedge^2 T^*M, \quad \beta \mapsto \eta + F(\beta),
\end{align}
which is fibre-preserving, and maps $\mathcal{I}_Z$ diffeomorphically onto $\{\eta'\in \Omega^2(M)|\;\eta'-\eta\in \mathcal{I}_{-Z}\}$,
sending the zero section to $\eta$. Finally, we saw that this map restricts to a  diffeomorphism from
$\mathcal{I}_Z \cap ((K^*\otimes G^*)\oplus \wedge^2 G^*)$ to the $2$-forms 
$\eta'$ of rank $k$ such that $\eta'-\eta\in \mathcal{I}_{-Z}$.

Drawing from the results established up to this point -- {namely Theorem \ref{theorem: almost Dirac structures}  and Corollary \ref{corollary: MC for bivector fields}} -- we are now ready to prove our main result:

\begin{theorem}\label{theorem: main result}
Let $(M,\eta)$ be a pre-symplectic manifold. The choice of a complement $G$ to the kernel of $\eta$ determines a bivector field $Z$ by requiring $Z^\sharp=-(\eta^\sharp \vert_G)^{-1}$. 
Suppose $\beta$ is a $2$-form on $M$, which lies in $\mathcal{I}_Z$.
The following statements are equivalent:

\begin{enumerate}
\item $\beta$ is a Maurer-Cartan element of the Koszul \lione $\Omega_\hor(M)[2]$ of $(M,\eta)$, which was introduced in Theorem \ref{theorem: construction - Koszul L-infty}.
\item The image of $\beta$ under the map $\exp_\eta$, which is recalled in \eqref{exp-recalled}, is a pre-symplectic structure
of the same rank as $\eta$.
\end{enumerate}
\end{theorem}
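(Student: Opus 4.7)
The plan is to chain together three results already established in the paper, so that the proof is essentially a bookkeeping exercise: Theorem \ref{theorem: construction - Koszul L-infty} (which says the Koszul $L_\infty[1]$-structure on $\Omega_\hor(M)[2]$ is the restriction of the one on $\Omega(M)[2]$), Corollary \ref{corollary: MC for bivector fields} (which identifies Maurer-Cartan elements on $\Omega(M)[2]$ with $2$-forms $\beta\in \Gamma(\mathcal{I}_Z)$ such that $F(\beta)$ is closed), and Theorem \ref{theorem: almost Dirac structures}(i) (which says $\exp_\eta(\beta)$ has the same rank as $\eta$ if and only if $\beta\in \Omega_\hor^2(M)$).

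First I would unpack statement (2). Since $d\eta=0$, the $2$-form $\exp_\eta(\beta)=\eta+F(\beta)$ is closed if and only if $F(\beta)$ is closed. By Theorem \ref{theorem: almost Dirac structures}(i), $\exp_\eta(\beta)$ has the same rank as $\eta$ if and only if $\beta\in \Gamma((K^*\otimes G^*)\oplus \wedge^2 G^*) = \Omega_\hor^2(M)$. Hence (2) is equivalent to the conjunction: $\beta\in \Omega_\hor^2(M)$ and $dF(\beta)=0$.

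Next I would unpack (1). By Theorem \ref{theorem: construction - Koszul L-infty}, the structure maps $\lambda_1,\lambda_2,\lambda_3$ preserve $\Omega_\hor(M)[2]$, and the Koszul $L_\infty[1]$-structure on $\Omega_\hor(M)[2]$ is simply the restriction of the one on $\Omega(M)[2]$. Therefore an element $\beta\in \Omega_\hor^2(M)$ satisfies the Maurer-Cartan equation of the Koszul $L_\infty[1]$-algebra if and only if it satisfies the Maurer-Cartan equation of $(\Omega(M)[2],\lambda_1,\lambda_2,\lambda_3)$. Corollary \ref{corollary: MC for bivector fields} (applied under the convention $\mathcal{U}=\mathcal{I}_Z$ adopted just after its statement) then rewrites the latter as $dF(\beta)=0$. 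So (1) is equivalent to: $\beta\in \Omega_\hor^2(M)$ and $dF(\beta)=0$, which is exactly the unpacking of (2) obtained in the previous paragraph.

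There is no serious obstacle: the main thing to watch is the interplay between the two conditions. Neither (1) nor (2) can be reduced to a single one of ``closedness'' or ``rank'', and the key conceptual point is that the subcomplex $\Omega_\hor(M)[2]$ encodes the rank condition at the level of the graded space itself, while the Maurer-Cartan equation encodes the closedness of $\exp_\eta(\beta)$. The only care needed is to observe that the implication from (1) to (2) requires horizontality of $\beta$ (which is built into membership in the Koszul $L_\infty[1]$-algebra), and that, conversely, the rank condition in (2) forces $\beta$ to be horizontal, so that the restricted Maurer-Cartan equation applies.
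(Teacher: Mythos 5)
Your proposal is correct and follows essentially the same route as the paper's proof: both reduce the rank condition to horizontality of $\beta$ via Theorem \ref{theorem: almost Dirac structures}, reduce the Maurer--Cartan equation in $\Omega_\hor(M)[2]$ to the one in $\Omega(M)[2]$, and then invoke Corollary \ref{corollary: MC for bivector fields} together with $d\eta=0$ to identify the latter with closedness of $\exp_\eta(\beta)$. Your explicit remark that the rank condition in (2) forces horizontality, so that the restricted Maurer--Cartan equation is the relevant one in both directions, is a point the paper's proof leaves implicit.
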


\begin{proof}
We already know that $\beta$ being horizontal is equivalent to $\exp_\eta(\beta)$ being of the same rank as $\eta$, see Theorem \ref{theorem: almost Dirac structures} (iii) in Section \ref{subsection: Dirac parametrization}.
Clearly  $\beta$ is Maurer-Cartan in $\Omega_\hor(M)[2]$
if, and only if, it is Maurer-Cartan in $\Omega(M)[2]$.
By Corollary \ref{corollary: MC for bivector fields}, the latter is equivalent to $F(\beta)$ being closed. In turn this is equivalent to  $\exp_\eta(\beta)$ being closed, since these two forms differ by the closed $2$-form $\eta$.\end{proof}

Rephrasing the above result, the fibrewise map
$$ \exp_\eta: \mathcal{I}_Z \cap ((K^*\otimes G^*)\oplus \wedge^2 G^*) \to (\wedge^2 T^*M)_k$$
restricts, on the level of sections, to an injective map
$$\boxed{ \exp_\eta: \Gamma(\mathcal{I}_Z)\cap \MC(\eta) \to  \Presym^k(M)}$$
with image an open neighborhood of $\eta$ in 
$\Presym^k(M)$ (equipped with the $\mathcal{C}^0$-topology). That is, for $\beta\in \Omega^2_\hor(M)$ sufficiently $\mathcal{C}^0$-small,
being Maurer-Cartan is equivalent to $\exp_\eta(\beta)$
being a pre-symplectic structure of rank equal to the rank of $\eta$.

\subsubsection {\underline{Quotient $L_{\infty}[1]$-algebras}}

Theorem \ref{theorem: construction - Koszul L-infty} asserts that the multiplicative ideal of horizontal forms $\Omega_\hor(M)$
is closed with respect to the $L_\infty[1]$-algebra structure
maps $\lambda_1,\lambda_2,\lambda_3$ from Proposition \ref{proposition: extended Koszul}. We now refine this result. For $k\ge 0$, we denote by 
$F^{k}\Omega(M)$ the $k$'th power of $\Omega_\hor(M)\subset \Omega(M)$. Given the choice of a subbundle $G\subset TM$ which is a complement to the kernel $K$ of the pre-symplectic form $\eta$, we have the  identification 
$$F^k\Omega(M)=\Gamma(\wedge^{\bullet}K^*\otimes \wedge^{\ge k}G^*).$$
This gives us a filtration
$$F^{0}\Omega(M)=\Omega(M) \, \supset \, F^{1}\Omega(M)=\Omega_\hor(M) \, \supset \, F^{2}\Omega(M)\,  \supset \, \dots \, \supset \, \{0\}.$$

 \begin{lemma}\label{lem:degrees}
The $L_\infty[1]$-algebra structure maps $\lambda_1,\lambda_2,\lambda_3$ on $\Omega(M)[2]$ associated to the bivector field $Z$, see Proposition \ref{proposition: extended Koszul}, satisfy for all $k\ge 0$:
\begin{enumerate}
\item[i)] $\lambda_1(F^{k}\Omega(M)[2])\subset F^{k}\Omega(M)[2],$
\item[ii)] $\lambda_2(F^{k}\Omega(M)[2],F^{l}\Omega(M)[2])\subset F^{k+l-1}\Omega(M)[2]$,
\item[iii)] $\lambda_3(F^{k}\Omega(M)[2],F^{l}\Omega(M)[2],F^{m}\Omega(M)[2])\subset F^{k+l+m-2}\Omega(M)[2]$.
\end{enumerate}
\end{lemma}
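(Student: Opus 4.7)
The plan is to prove the three parts separately: (i) is immediate from involutivity of $K$, (iii) is a direct consequence of Lemma \ref{lem:noggg}, and the delicate part is (ii), where a naive estimate via contraction with $Z\in\Gamma(\wedge^{2}G)$ only yields $F^{k+l-2}\Omega(M)$ and must be sharpened to $F^{k+l-1}\Omega(M)$.

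For (i), I would first show that $d\Gamma(K^{\circ}) \subset F^{1}\Omega(M)$: for $\xi \in \Gamma(K^{\circ}) = \Gamma(G^{*})$ and $X, Y \in \Gamma(K)$, the identity $d\xi(X,Y) = -\xi([X,Y])$ vanishes since $K$ is involutive (a consequence of $d\eta = 0$). Combined with the Leibniz rule and the fact that $F^{k}\Omega(M)$ is locally the $\Gamma(\wedge K^{*})$-module generated by wedge products of $\geq k$ sections of $\Gamma(G^{*})$, an induction on $k$ yields $d(F^{k}\Omega(M)) \subset F^{k}\Omega(M)$.

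For (iii), I would unwind the formula $\lambda_{3}(\alpha[2]\odot\beta[2]\odot\gamma[2]) = (-1)^{|\beta|+1}(\alpha^{\sharp}\wedge\beta^{\sharp}\wedge\gamma^{\sharp})(\frac{1}{2}[Z,Z])[2]$ from Proposition \ref{proposition: extended Koszul}. By Lemma \ref{lem:noggg}, $\frac{1}{2}[Z,Z]$ lies in $\Gamma(\wedge^{2}G\otimes K)$; writing it locally as $\sum_{j}U_{j}\wedge V_{j}\wedge W_{j}$ with $U_{j},V_{j}\in\Gamma(G)$ and $W_{j}\in\Gamma(K)$, each summand of $\lambda_{3}(\alpha\odot\beta\odot\gamma)$ has the form $\iota_{X_{1}}\alpha\wedge\iota_{X_{2}}\beta\wedge\iota_{X_{3}}\gamma$ for a permutation $(X_{1},X_{2},X_{3})$ of $(U_{j},V_{j},W_{j})$. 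Contraction by a section of $G$ drops the $G^{*}$-count by $1$, while contraction by a section of $K$ preserves it (since sections of $K$ annihilate $\Gamma(K^{\circ}) = \Gamma(G^{*})$). Because exactly two of the three contracting vectors lie in $\Gamma(G)$, the total drop is $2$, placing each term in $F^{k+l+m-2}\Omega(M)$.

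For (ii), I would exploit that $\lambda_{2}$ is (up to sign) the Koszul bracket $[\cdot,\cdot]_{Z}$, which is a graded biderivation with respect to $\wedge$ by Remark \ref{rem:Kos2der}. The strategy is to reduce to two auxiliary statements, each proved by induction on $l$ via the biderivation rule: (a) for $\nu \in \Gamma(K^{*})$ and $\beta \in F^{l}\Omega(M)$, one has $[\nu,\beta]_{Z} \in F^{l-1}\Omega(M)$; and (b) for $\xi \in \Gamma(G^{*})$ and $\beta \in F^{l}\Omega(M)$, one has $[\xi,\beta]_{Z} \in F^{l}\Omega(M)$. The base case of (b) is precisely Lemma \ref{lem:g*g*g*}; the inductive steps of both (a) and (b) proceed by peeling a $\Gamma(G^{*})$-factor off $\beta$. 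Once (a) and (b) are in hand, decomposing $\alpha \in F^{k}\Omega(M)$ locally as $\sum_{i}\omega_{i}\wedge\theta_{i}$ with $\omega_{i}\in\Gamma(\wedge K^{*})$ and $\theta_{i}\in\Gamma(\wedge^{\geq k}G^{*})$, and applying biderivation once more, completes the argument: in each term, either one $G^{*}$-factor of $\alpha$ is traded and absorbed via (b), or none is traded but the bracket drops one level via (a). Lemma \ref{lem:g*g*g*} is what makes the sharp bound possible, and the nested inductions are the main bookkeeping obstacle.
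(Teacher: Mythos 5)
Your proof is correct and follows essentially the same route as the paper's: part (i) from involutivity of $K$ plus the Leibniz rule, part (ii) by reducing the bi-derivation $\lambda_2$ to bracket computations on $1$-forms with Lemma \ref{lem:g*g*g*} as the key input, and part (iii) from Lemma \ref{lem:noggg} together with the explicit formula for $\lambda_3$. The paper organizes (ii) as three inclusions on $1$-forms ($\lambda_2(\Gamma(G^*),\Gamma(G^*))\subset\Gamma(G^*)$, $\lambda_2(\Gamma(K^*),\Gamma(K^*))=0$, $\lambda_2(\Gamma(K^*),\Gamma(G^*))\subset\Omega^1(M)$) followed by the bi-derivation property, rather than via your two nested inductions, but the content is the same.
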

\begin{proof}

We use the notation $\Omega^{j,k}:=
\Gamma(\wedge^{j}K^*\otimes \wedge^{k}G^*)$ for brevity
and also suppress the shift in degrees.
\begin{enumerate}
\item[i)]  For $\alpha\in \Gamma(G^*)=\Gamma(K^{\circ})$, the involutivity of $K$ and the usual formula for the de~Rham differential imply that
$d\alpha|_{\wedge^2 K}=0$, i.e.~$d\alpha\in \Omega^{1,1}+\Omega^{0,2}$. Since $d$ is a (degree one) derivation of the wedge product, we obtain $d(\Omega^{0,k})\subset 
\Omega^{1,k}+\Omega^{0,k+1}$.
For $\alpha\in \Gamma(K^*)$ in general we can only state that $d\alpha\in \Omega^{2,0}+\Omega^{1,1}+\Omega^{0,2}$, so that $d(\Omega^{j,0})\subset 
\Omega^{j+1,0}+\Omega^{j,1}+\Omega^{j-1,2}$. The statement follows from this.
\item[ii)]  We have
\begin{itemize}
\item [] $\lambda_2(\Gamma(G^*),\Gamma(G^*))\subset \Gamma(G^*)$
\item [] $\lambda_2(\Gamma(K^*),\Gamma(K^*))\equiv 0$
\item [] $\lambda_2(\Gamma(K^*),\Gamma(G^*)) \subset \Gamma(K^*)+\Gamma(G^*)$
\end{itemize}
The first statement is established in Lemma \ref
{lem:g*g*g*}, Section \ref{section: bivec}.
The second is
 a consequence of the formula \eqref{eq:KoszulBracket1f}, page \pageref{eq:KoszulBracket1f}.
The third statement is clear.
Since $\lambda_2$ is a graded  bi-derivation with respect to the wedge product, item ii) follows.
\item[iii)]   By Lemma \ref{lem:noggg} we know that 
$[Z,Z]$ has
 no component in $\wedge^3 G$. The formula for $\lambda_3$ in Proposition \ref{proposition: extended Koszul} implies the statement.\end{enumerate}
\end{proof}

\begin{remark}
When $G$ is involutive, Lemma \ref{lem:degrees}  can be improved to the following statement:
$\lambda_1(\Omega^{j,k})\subset \Omega^{j+1,k}+\Omega^{j,k+1}$ and 
$\lambda_2(\Omega^{\bullet,k}, \Omega^{\bullet,l})\subset  \Omega^{\bullet,k+l-1}$ (recall that $\lambda_3$ vanishes).
This follows from the proof of Lemma \ref{lem:degrees}, together with the following observations which hold when $G$ is involutive.
First:  $\alpha\in \Gamma(K^*)$ satisfies $d\alpha\in \Omega^{2,0}+\Omega^{1,1}$. Second: using this and formula \eqref{eq:KoszulBracket1f}, page \pageref{eq:KoszulBracket1f}, one has
   $\lambda_2(\Gamma(K^*),\Gamma(G^*)) \subset \Gamma(K^*)$.
\end{remark}
Returning to the general case, Lemma \ref{lem:degrees} allows us to
refine Theorem \ref{theorem: construction - Koszul L-infty}
as follows:
\begin{corollary}\label{cor:liideal}
\hspace{0cm}
\begin{itemize}
\item [1)] $(F^{1}\Omega(M))[2]=\Omega_\hor(M)[2]$ is an $L_{\infty}[1]$-subalgebra of $\Omega(M)[2]$.
\item [2)] $(F^{k}\Omega(M))[2]$ is an $L_{\infty}[1]$-ideal of $\Omega_\hor(M)[2]$ for all $k\ge 2$. Hence we get a sequence of \liones and strict morphisms between them
$$ \left(\faktor{\Omega_{\hor}(M)}{F^2\Omega(M)}\right)[2] \longleftarrow \left(\faktor{\Omega_{\hor}(M)}{F^3\Omega(M)}\right)[2] \longleftarrow \cdots \longleftarrow \Omega_{\hor}(M)[2].$$
\end{itemize}
\end{corollary}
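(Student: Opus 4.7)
The proof should follow almost immediately from Lemma \ref{lem:degrees}, so the plan is essentially to unpack the bookkeeping of the filtration degrees and to record what it means for a (homogeneous) subspace to be an $L_\infty[1]$-subalgebra or an $L_\infty[1]$-ideal.

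For part 1), the plan is to verify that $\Omega_\hor(M)[2]=F^{1}\Omega(M)[2]$ is closed under $\lambda_1$, $\lambda_2$, and $\lambda_3$. Specializing Lemma \ref{lem:degrees} to $k=l=m=1$ gives $\lambda_1(F^1)\subset F^1$, $\lambda_2(F^1,F^1)\subset F^{1}$, and $\lambda_3(F^1,F^1,F^1)\subset F^{1}$, so the three structure maps restrict to $\Omega_\hor(M)[2]$. This recovers Theorem \ref{theorem: construction - Koszul L-infty}, but now as a consequence of the finer filtration estimates.

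For part 2), the plan is to show, for $k\ge 2$, that $F^{k}\Omega(M)[2]$ is stable under $\lambda_1$ and absorbs elements of $\Omega_\hor(M)[2]=F^1\Omega(M)[2]$ under $\lambda_2$ and $\lambda_3$. Concretely, Lemma \ref{lem:degrees} yields
\[
\lambda_1(F^{k})\subset F^{k}, \qquad \lambda_2(F^{k},F^{1})\subset F^{k+1-1}=F^{k}, \qquad \lambda_3(F^{k},F^{1},F^{1})\subset F^{k+1+1-2}=F^{k},
\]
which are exactly the closure properties that define an $L_\infty[1]$-ideal of $\Omega_\hor(M)[2]$. By the standard fact that the quotient of an $L_\infty[1]$-algebra by an $L_\infty[1]$-ideal inherits an $L_\infty[1]$-structure for which the projection is a strict morphism, each quotient $\Omega_\hor(M)/F^k\Omega(M)$ becomes an $L_\infty[1]$-algebra. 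Since $F^{k+1}\Omega(M)\subset F^k\Omega(M)$, the induced projections assemble into the claimed tower of strict morphisms.

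I do not foresee any serious obstacle; the entire argument is a direct consequence of the filtration estimates in Lemma \ref{lem:degrees}, and the only point that requires a sentence of care is checking that the relevant filtration degree inequalities give exactly the degrees $F^{k}$ required (for instance $k+1-1=k$ in the binary case and $k+1+1-2=k$ in the ternary case), so that ``one entry in $F^{k}$, the rest in $F^{1}$'' lands back in $F^{k}$.
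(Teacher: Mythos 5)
Your proposal is correct and follows exactly the paper's route: the paper's proof consists of the single sentence that both statements are immediate consequences of Lemma \ref{lem:degrees}, and your degree bookkeeping ($k+1-1=k$ for $\lambda_2$, $k+1+1-2=k$ for $\lambda_3$) is precisely the unpacking of that remark, matching the paper's definition of an $L_\infty[1]$-ideal.
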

\begin{proof}
Both statements are immediate consequences of Lemma \ref{lem:degrees}.
\end{proof}

In Section \ref{section: characteristic foliation}, we identify the \lione $(\Omega_\hor(M)/F^2\Omega(M))[2]$ with the \lione that controls the deformations of the characteristic foliation $K=\ker(\eta)$ of the pre-symplectic manifold $(M,\eta)$.
\\

\subsection{Examples}\label{section: examples}

We present two examples for Corollary \ref
{corollary: MC for bivector fields}.

\begin{example}[An example with quadratic Maurer-Cartan equation]
On the open subset $(\RR\setminus \{-1\})\times \RR^2$ of $\RR^3$ 
consider the Poisson bivector field $Z=\pd{y}\wedge\pd{z}$. The closed $2$-form $\alpha:=(xdy+y dx)\wedge dz$ lies in $\mathcal{I}_{-Z}$. We check that $\beta:=F^{-1}(\alpha)$ satisfies 
\begin{equation*}
d\beta+\frac{1}{2}[\beta,\beta]_Z
=0,
\end{equation*}
i.e.~that $\beta[2]$ is a  Maurer-Cartan element of the $L_{\infty}[1]$-algebra of
Proposition \ref{proposition: extended Koszul}, as predicted by Corollary \ref
{corollary: MC for bivector fields}.
We have  $\beta^\sharp=\alpha^\sharp(\id - Z^{\sharp}\alpha^\sharp)^{-1}$ by Remark \ref{rem:alfabeta}, whence 
$$\beta=\frac{1}{1+x}\alpha=\frac{x}{1+x}dy\wedge dz+\frac{y}{1+x}dx\wedge dz.$$
We are done by computing 
\begin{eqnarray*}
d\beta&=&-\frac{x}{(1+x)^2}dx\wedge dy\wedge dz, \\
\, [\beta,\beta]_Z &=& 2\cL_{Z}\beta\wedge \beta=2\frac{x}{(1+x)^2}dx\wedge dy\wedge dz.
\end{eqnarray*}
\end{example}

\begin{example}[An example with cubic Maurer-Cartan equation]
Let $a$ be a smooth function on the real line. 
On the open subset $U$ of $\RR^4$ consisting of points $(x,y,z,w)$ so that $1+a(y)\neq 0$,
let $Z=\pd{y}\wedge(\pd{z}-a(y)\pd{x})$.
From now on we write $a$ instead of $a(y)$ for the ease of notation. Notice that 
$Z$ is a Poisson structure if{f} $G$ is involutive, which happens exactly when the
derivative $a'$ vanishes (i.e.~ $a$ is locally constant).

The closed 2-form $\alpha:= dx\wedge dy+dz\wedge dw$ lies in $\mathcal{I}_{-Z}$. We check that $\beta:=F^{-1}(\alpha)$ satisfies 
\begin{equation}\label{MCsigns}
d\beta+\frac{1}{2}[\beta,\beta]_Z
-\frac{1}{6} (\beta^\sharp\wedge \beta^\sharp\wedge\beta^\sharp )(\frac{1}{2}[Z,Z])
=0,
\end{equation}
i.e.~that $\beta[2]$ is a  Maurer-Cartan element of the $L_{\infty}[1]$-algebra of
Proposition \ref{proposition: extended Koszul}, as prediced by Corollary \ref
{corollary: MC for bivector fields}.

We know that $\beta^\sharp=\alpha^\sharp(\id - Z^{\sharp}\alpha^\sharp)^{-1}$ by Remark \ref{rem:alfabeta}. In matrix form w.r.t. the frames $\pd{x},\pd{y},\pd{z},\pd{w}$ and 
$dx,dy,dz,dw$ we have
$$
Z^\sharp\alpha^\sharp=\left(\begin{array}{c  c   c c}  -a & 0 & 0 & 0  \\ 0& -a & 0 & 1  \\  1 & 0 & 0 &0 \\0 & 0 &0 & 0 \end{array}\right),
\quad\quad
(\mathrm{id}-Z^\sharp \alpha^\sharp)^{-1} =  \frac{1}{1+a}\left(\begin{array}{c  c   c c}  1 & 0 & 0 &0 \\ 0& 1 & 0 &1  \\    1 & 0 & 1+a & 0 \\ 0 &0 & 0 & 1+a \end{array}\right)$$
and therefore
$$\beta= \frac{1}{1+a}(dx\wedge dy+dx\wedge dw)+dz\wedge dw.$$
We compute \begin{equation}\label{eq:cub1}
d\beta=\frac{a'}{(1+a)^2}dx\wedge dy\wedge dw.
\end{equation}
Next we compute $[\beta,\beta]_Z$.
We have $\beta\wedge\beta=\frac{2}{1+a}dx\wedge dy\wedge dz\wedge dw$, so
$$\cL_{Z}(\beta\wedge \beta)=-d\Big(\iota_{\pd{y}}\iota_{\pd{z}-a\pd{x}}\big(\beta\wedge\beta\big)\Big)=2\frac{a'}{(1+a)^2}(dx\wedge dy\wedge dw+dy\wedge dz\wedge dw).$$
Further $\cL_{Z}\beta=\iota_Zd\beta-d\iota_Z\beta=\frac{a'}{(1+a)^2}(dy-adw)$, so that
$$-2\cL_{Z}\beta\wedge \beta=2\frac{a'}{(1+a)^2}(dx\wedge dy\wedge dw-dy\wedge dz\wedge dw).$$ Therefore there is a cancellation and 
\begin{equation}\label{eq:cub2}
[\beta,\beta]_Z=-\cL_{Z}(\beta\wedge \beta)+2\cL_{Z}\beta\wedge \beta=-4\frac{a'}{(1+a)^2}(dx\wedge dy\wedge dw). 
\end{equation}

Finally, using $[Z,Z]=-2a'\pd{x}\wedge\pd{y}\wedge\pd{z}$ we see that
\begin{equation}\label{eq:cub3}
(\beta^\sharp\wedge \beta^\sharp\wedge\beta^\sharp )(\frac{1}{2}[Z,Z])
=-6\frac{a'}{(1+a)^2}dx\wedge dy\wedge dw.
\end{equation}
Using Equations \eqref{eq:cub1}, \eqref{eq:cub2},
\eqref{eq:cub3}, we see that 
the left-hand side of Equation \eqref{MCsigns} reads
$$[1+\frac{1}{2}\cdot(-4)-\frac{1}{6}\cdot(-6)]\frac{a'}{(1+a)^2}dx\wedge dy\wedge dw=0.$$
\end{example}

 \addtocontents{toc}{\protect\mbox{}\protect}
\section{\textsf{The characteristic foliation}}
\label{section: characteristic foliation}

Recall that for any pre-symplectic structure $\eta$
on $M$, the kernel $K=\ker(\eta)$ is an involutive distribution of constant rank. This observation yields a map
from the space of pre-symplectic structures on $M$ of some given rank $k$
to the space of codimension $k$ foliations, which we denote by
$$ \rho: \Presym^k(M) \to \Foliations^k(M).$$

Our aim is to understand this map from an algebraic perspective.

\subsection{A chain map}\label{subsection: foliations - tangent map}

We first recall the Bott-complex of a foliation.
\begin{definition}\label{def:BottConn}
Let $K$ be a foliation of $M$. Denote the normal bundle $TM/K$ by $\nu_K$. It comes equipped with a natural flat, partial connection, called
the Bott-connection, given by
\begin{eqnarray*}
\Gamma(K)\times \Gamma(\nu_K) & \to & \Gamma(\nu_K),\\
(X,Y) & \mapsto & [X, \tilde{Y}] \,\, \mathrm{mod} \, K,
\end{eqnarray*}
where $\tilde{Y}$ is a lift of $Y$ to a vector field on $M$.

The \emph{Bott-complex} of $K$ is the graded vector space
$\Omega(K,\nu_K):=\Gamma(\wedge K^* \otimes \nu_K)$,
equipped with the derivative corresponding to the Bott-connection.
\end{definition}

It is known that the formal tangent space to
the space of foliations at $K$ can be identified with closed elements
of degree $1$
in the Bott-complex. This and Remark \ref{rem:tpres} 
suggest} that for any given pre-symplectic form $\eta$ on $M$, there should exist a chain map $\Omega_\hor(M) \to \Omega(K,\nu_K)$. We now construct this map.
 
\begin{lemma}
\begin{enumerate}
\item Let $K^\circ$ be the dual bundle to $\nu_K$, equipped with the dual (partial) connection. The map $\eta^\sharp: TM \to T^*M$ restricts to a vector bundle isomorphism $\nu_K \cong K^\circ$, which is compatible with the flat connections.
\item The following map is compatible with the differentials:
\begin{eqnarray*}
\varphi: \Omega_\hor^k(M) &\to & \Omega^{k-1}(K,K^\circ)\\
\beta &\mapsto & \varphi(\beta)(X_1,\dots,X_{k-1}):=\beta(X_1,\dots, X_{k-1},\cdot).
\end{eqnarray*}
\end{enumerate}
\end{lemma}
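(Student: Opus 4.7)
\textbf{Part (1).} Since $K = \ker(\eta^\sharp)$ by definition, $\eta^\sharp$ factors through an injective vector bundle map $\bar\eta^\sharp\colon \nu_K \to T^*M$. For $v \in TM$ and $X \in \Gamma(K)$, skew-symmetry gives $(\eta^\sharp v)(X) = \eta(v,X) = -\eta(X,v) = 0$, so the image of $\bar\eta^\sharp$ lies inside $K^\circ$. Both bundles have rank equal to the rank of $\eta$, so $\bar\eta^\sharp\colon \nu_K \to K^\circ$ is a vector bundle isomorphism. For compatibility with the partial flat connections, I would fix $X \in \Gamma(K)$ and $[Y] \in \Gamma(\nu_K)$ represented by some $Y \in \mathfrak{X}(M)$, and verify that the two sections of $K^\circ$ obtained by (a) applying $\bar\eta^\sharp$ to $\nabla^{\mathrm{Bott}}_X[Y] = [[X,Y]]$, and (b) applying the dual Bott connection to $\bar\eta^\sharp[Y] = \eta(Y,\cdot)$, agree. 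Evaluating (a) on $Z \in \mathfrak{X}(M)$ gives $\eta([X,Y],Z)$, while (b) gives $X\cdot\eta(Y,Z) - \eta(Y,[X,Z])$. The identity $d\eta(X,Y,Z)=0$, combined with $\iota_X\eta = 0$ (which kills the three remaining terms in the Koszul formula for $d\eta$), reduces precisely to the equality of (a) and (b).

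\textbf{Part (2).} I would first verify $\varphi$ is well-defined, i.e.~$\varphi(\beta)(X_1,\dots,X_{k-1})$ takes values in $\Gamma(K^\circ)$: for $Y\in\Gamma(K)$ the value $\beta(X_1,\dots,X_{k-1},Y)$ vanishes because all $k$ arguments lie in $\Gamma(K)$ and $r(\beta)=0$. Skew-symmetry and $\smooth^\infty(M)$-linearity in the $X_i$'s are inherited from $\beta$.

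For the chain-map property I would evaluate both $d_{\mathrm{Bott}}\varphi(\beta)$ and $\varphi(d\beta)$ on an arbitrary tuple $(X_0,\dots,X_{k-1})\in\Gamma(K)^{k}$, and then pair with an arbitrary $Z\in\mathfrak{X}(M)$. Using the dual Bott connection from Part (1), Koszul's formula for $d_{\mathrm{Bott}}$ expands into
\[
  \sum_{i}(-1)^{i}\bigl[X_i\!\cdot\!\beta(X_0,\dots,\widehat{X_i},\dots,X_{k-1},Z)-\beta(X_0,\dots,\widehat{X_i},\dots,X_{k-1},[X_i,Z])\bigr]
\]
\[
 {}+\sum_{i<j}(-1)^{i+j}\beta([X_i,X_j],X_0,\dots,\widehat{X_i},\dots,\widehat{X_j},\dots,X_{k-1},Z).
\]
Koszul's formula for $d\beta(X_0,\dots,X_{k-1},Z)$ produces the same expression after using (i) the horizontality identity $\beta(X_0,\dots,X_{k-1})=0$ to discard the term $(-1)^{k}Z\cdot\beta(X_0,\dots,X_{k-1})$, and (ii) moving $[X_i,Z]$ to the final slot, which picks up exactly the sign $(-1)^{k-1}$ needed to match the Bott-side terms.

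The only real obstacle is the sign bookkeeping in the chain-map comparison; the well-definedness of the dual Bott connection on $K^\circ$ (which rests on involutivity of $K$) and the vanishing of $\beta$ on $\wedge^{k}K$ are the key inputs that make the extra term disappear.
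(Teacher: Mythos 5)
Your proposal is correct and follows essentially the same route as the paper: part (1) reduces connection-compatibility to the surviving terms of $d\eta(X,\tilde Y,\tilde Z)=0$ after using $\iota_X\eta=0$, and part (2) compares the Koszul formulas for $d\beta$ and $d_\nabla\varphi(\beta)$, discarding the extra term via the vanishing of $\beta$ on $\wedge^k K$. The sign bookkeeping you flag works out exactly as you predict, so no further changes are needed.
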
 

\begin{proof}

Concerning item (1), the only (slightly) non-obvious fact is the compatibility with the connections. For all $X$, $Y \in \Gamma(\nu_K)$ and $W\in \Gamma(K)$ we need to verify the equality
$$ \langle\nabla_W \eta^\sharp(X), Y\rangle \overset{!}{=} \langle\eta^\sharp(\nabla_W X),Y\rangle = \eta(\nabla_W X,Y).$$
By definition of the connection on $K^\circ$, the left-hand side can be written as
$$ W(\eta(X,Y)) - \eta(X,\nabla_W Y).$$
In total, we see that compatibility of $\eta^\sharp$ with the connections is equivalent to the equation
$$ W(\eta(X,Y)) = \eta([W,\tilde{X}],Y) + \eta(X,[W,\tilde{Y}]),$$
where $\tilde{X}$ and $\tilde{Y}$ are extensions of $X$ and $Y$ to vector fields.
This equation is satisfied because it is precisely given by the non-vanishing terms on the left-hand side of $d\eta(W,\tilde{X},\tilde{Y})=0$. 

For item (2), we have to check for all $X_1,\dots, X_k \in \Gamma(K)$
and $Y \in \Gamma(\nu_K)$, that the equality
$$ \langle\varphi(d\beta)(X_1,\dots,X_k),Y\rangle = \langle d_\nabla(\varphi(\beta))(X_1,\dots,X_k),Y\rangle$$
holds.
By definition, the left-hand side is equal to
$ d\beta(X_1,\dots,X_k,\tilde{Y})$, where $\tilde{Y}$ is an extension of $Y$ to a vector field, while the right-hand side is equal to 
\begin{eqnarray*}
 &&\sum_{i=1}^k(-1)^{i-1} \langle\nabla_{X_i}(\beta(X_1,\dots,\hat{X}_i,\dots,X_k,\cdot)),\tilde{Y}\rangle \\
 &&+ \sum_{i<j}(-1)^{i+j}\beta([X_i,X_j],\dots,\hat{X}_i,\dots,\hat{X}_j,\dots,\tilde{Y}),
 \end{eqnarray*}
where the first line equals
\begin{eqnarray*}
\sum_{i=1}^k(-1)^{i-1}X_i(\beta(X_1,\dots,\hat{X}_i,\dots,X_k,\tilde{Y}))
 + \sum_{i=1}^k (-1)^i \beta(X_1,\dots,\hat{X}_i,\dots,X_k,[X_i,\tilde{Y}]).
\end{eqnarray*}
Using that $\beta$ vanishes when restricted to $K$, we see that, in total, the above expression sums up to
$ d\beta(X_1,\dots,X_k,\tilde{Y})$ as well.
 \end{proof}
 
 \begin{corollary}\label{rem:Bottchainmap}
 The following composition is a chain map
 
\begin{equation}\label{eq:comp}
\xymatrix{
 \Omega^\bullet_\hor(M) \ar[r]^(0.45){\varphi} & \Omega^{\bullet-1}(K,K^\circ) \ar[r]^{(\eta^\sharp)^{-1}} & \Omega^{\bullet-1}(K,\nu_K).
 }
\end{equation}
 \end{corollary}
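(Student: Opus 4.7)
The statement is an immediate consequence of the two items of the preceding lemma, so the plan is essentially to observe that the composition of two chain maps is a chain map. Still, it is worth spelling out briefly what each factor contributes.

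First, I would note that item (2) of the preceding lemma already states that $\varphi \colon \Omega^{\bullet}_\hor(M) \to \Omega^{\bullet-1}(K, K^{\circ})$ intertwines the de~Rham differential on the source with the Bott-type differential $d_{\nabla}$ on the target (where $K^{\circ}$ is equipped with the dual of the Bott connection on $\nu_K$). So for this piece no further work is required.

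Next, I would argue that the pointwise isomorphism $(\eta^{\sharp})^{-1}\colon K^{\circ} \to \nu_K$ induces a chain isomorphism
\[
\Omega^{\bullet}(K, K^{\circ}) \longrightarrow \Omega^{\bullet}(K, \nu_K),
\]
obtained by post-composing an element $\omega \in \Gamma(\wedge^{\bullet} K^* \otimes K^{\circ})$ with $\mathrm{id}_{\wedge K^*}\otimes (\eta^{\sharp})^{-1}$. Compatibility with the differentials is precisely the compatibility of $\eta^{\sharp}$ with the flat partial connections on $\nu_K$ and $K^{\circ}$, which is the content of item (1) of the preceding lemma: any flat-connection-preserving isomorphism of vector bundles over a foliated manifold produces an isomorphism of the associated foliated de~Rham complexes with coefficients.

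Finally, since the composition of chain maps is a chain map, the composition \eqref{eq:comp} is a chain map, which completes the proof. There is no real obstacle here; the only thing to be careful about is to make sure both factors genuinely intertwine the differentials, which has already been verified in the preceding lemma.
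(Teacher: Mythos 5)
Your proposal is correct and is exactly the argument the paper intends: item (2) of the preceding lemma shows $\varphi$ is a chain map, item (1) shows $(\eta^\sharp)^{-1}$ induces a chain isomorphism of the foliated complexes with coefficients, and the composition of chain maps is a chain map. This matches the paper's (implicit) reasoning, which is why the statement is presented as a corollary with no separate proof.
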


\begin{remark}
 One can check that the linear map
$$\{\alpha \in \Omega^2_\hor(M),\, d\alpha=0\} \to \{\beta \in \Omega^1(K,\nu_K),\, d_{\nabla}\beta=0\}$$
obtained by restricting \eqref{eq:comp}
coincides with {\em minus} the formal tangent map of
$\rho:  \Presym^k(M) \to \Foliations^k(M)$
at $\eta \in \Presym^k(M)$.
\end{remark}

\subsection{Deformation theory of foliations}
\label{subsection: deformations of foliations}

We first review the $L_{\infty}[1]$-algebra governing deformations of foliations.
We follow the exposition of  Xiang Ji \cite[Theorem 4.20]{Ji} (who works in the wider setting of deformations of Lie subalgebroids). For foliations, these (and more) results were first obtained by Huebschmann \cite{Huebsch}  and   Vitagliano \cite{VitaglianoFol} (see \cite[Section 8]{VitaglianoFol} and  \cite[Rem. 4.23]{Ji} for a comparison of results).

\begin{proposition}\label{prop:Ji}
Let $K$ be an involutive distribution on a manifold $M$, and let $G$ be a complement.
There is an $L_{\infty}[1]$-algebra structure on
 $\Gamma(\wedge K^*\otimes G)[1]$, whose only non-vanishing brackets are $l_1,-l_2,l_3$, with the property that the graph of $\phi\in \Gamma(K^*\otimes G)[1]$ is involutive if{f}  $\phi$ is a Maurer-Cartan element.
\end{proposition}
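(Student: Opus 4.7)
The plan is to extract the candidate brackets $l_1, l_2, l_3$ from the geometric involutivity condition for graphs, extend them to the full space $\Gamma(\wedge K^* \otimes G)[1]$ by a graded Leibniz rule, and finally verify the $L_\infty[1]$-identities via Voronov's higher derived bracket construction (the approach of \cite{Ji}).

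First I would write out the involutivity condition explicitly. For $\phi \in \Gamma(K^* \otimes G)$, the graph $K_\phi := \{X + \phi(X) \, : \, X \in \Gamma(K)\}$ is a subbundle of $TM$ of the same rank as $K$. It is involutive iff, for all $X, Y \in \Gamma(K)$,
$$[X + \phi(X), Y + \phi(Y)] \in \Gamma(K_\phi).$$
Expanding the Lie bracket and decomposing both sides via $TM = K \oplus G$ -- equating the $G$-component with $\phi$ applied to the $K$-component -- produces an equation in $\phi$ which is polynomial of total degree three. Separating terms by their degree in $\phi$ yields three pieces: a linear part which, under the identification $G \cong \nu_K$ from Subsection \ref{subsection: foliations - tangent map}, coincides with the Bott-connection differential of Definition \ref{def:BottConn}; a quadratic part built from the projections $\mathrm{pr}_K, \mathrm{pr}_G$ applied to $[X, \phi(Y)]$ and $[\phi(X), \phi(Y)]$; and a purely cubic part involving $\phi(\mathrm{pr}_K[\phi(X), \phi(Y)])$. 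After polarization, these three pieces serve as the definitions of $l_1(\phi)$, $\tfrac{1}{2} l_2(\phi, \phi)$ and $\tfrac{1}{6} l_3(\phi, \phi, \phi)$ on the degree-zero part $\Gamma(K^* \otimes G)$ of $\Gamma(\wedge K^* \otimes G)[1]$.

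Next I would extend $l_1, l_2, l_3$ to all of $\Gamma(\wedge K^* \otimes G)[1]$. Since this module is freely generated over $\Gamma(K^* \otimes G)$ by wedging with elements of $\Gamma(\wedge K^*)$, it is enough to impose, in each argument, the Koszul-signed graded Leibniz rule with respect to the $\wedge K^*$-action, together with graded symmetry. This uniquely produces multilinear maps of the required degrees, and by construction the Maurer-Cartan equation reduces, on elements of $\Gamma(K^* \otimes G)$, to the involutivity equation for $K_\phi$, which is the geometric characterization claimed in the statement.

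The main obstacle -- and the real algebraic content -- is to verify that $(l_1, -l_2, l_3)$ satisfies the $L_\infty[1]$-relations. I would avoid a direct quadratic-identity check and instead identify the brackets as higher derived brackets in the sense of Voronov: take the graded Lie algebra $L$ to be a suitable algebra of multi-derivations on $\wedge (K \oplus G)^*$ equipped with the Schouten-type bracket, let $P \cong \Gamma(\wedge K^* \otimes G)[1]$ be an abelian subalgebra with projection $a : L \to P$ determined by the decomposition $TM = K \oplus G$, and take $\Delta \in L$ of degree one to encode the Lie bracket of vector fields. The involutivity of $K$ ensures the Maurer-Cartan-type identity $[\Delta, \Delta] = 0$ needed by Voronov's theorem, which then automatically yields the desired $L_\infty[1]$-algebra on $P$. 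That only $l_1, -l_2, l_3$ are nonzero follows because $\Delta$ has weight at most $2$ in the $(K, G)$-bigrading, forcing the higher derived brackets to vanish beyond arity three. The delicate point, where I expect the bookkeeping to be most demanding, is to verify that the Voronov-produced brackets coincide, up to the indicated sign on $l_2$, with the maps extracted in the first two steps from the geometric condition.
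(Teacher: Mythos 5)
Your overall architecture --- realize $l_1,l_2,l_3$ as Voronov higher derived brackets attached to the splitting $TM=K\oplus G$, and match them against the expansion of the involutivity condition for $\mathrm{graph}(\phi)$ --- is exactly the route the paper takes: its proof is in fact a citation of Ji's Theorems 4.14 and 4.20, with Remark \ref{rem:Jisproof} spelling out the Voronov input data ($V=$ vector fields on $T[1]M$, $\mathfrak{a}=$ vertical fibrewise-constant fields for $T[1]M\to K[1]$, $\mathfrak{h}=$ fields tangent to $K[1]$, $X=$ the de~Rham homological vector field). Two points in your write-up need repair. First, the role of involutivity: $[\Delta,\Delta]=0$ is automatic (it is $d^2=0$, resp.\ the Jacobi identity for the Lie bracket); what involutivity of $K$ buys is that $\Delta$ is tangent to $K[1]$, i.e.\ lies in the Lie subalgebra $\mathfrak{h}$, which is the compatibility condition Voronov's theorem actually requires. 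Second, your step extending the brackets from $\Gamma(K^*\otimes G)$ to all of $\Gamma(\wedge K^*\otimes G)[1]$ by ``the Koszul-signed graded Leibniz rule with respect to the $\wedge K^*$-action'' does not work as stated: the true compatibility with the $\Gamma(\wedge K^*)$-module structure (Lemma \ref{lem:Jider}) contains first-order anomaly terms such as $(\iota_\xi d\alpha)|_{\wedge K}\cdot \psi$, and even the corrected rule only propagates upward in $K^*$-degree, so the values on $\Gamma(\wedge^0 K^*\otimes G)=\Gamma(G)$ (e.g.\ $l_2(Y_1,Y_2)=\mathrm{pr}_G[Y_1,Y_2]$) are additional seed data invisible to the graph-involutivity condition, which only probes $\phi\in\Gamma(K^*\otimes G)$. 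This step is, however, dispensable: Voronov's construction already defines the brackets on the whole of $\Gamma(\wedge K^*\otimes G)[1]$, and what remains is the finite computation comparing $\mathrm{pr}_{\mathfrak{a}}[X,\phi]$, $\mathrm{pr}_{\mathfrak{a}}[[X,\phi],\phi]$ and $\mathrm{pr}_{\mathfrak{a}}[[[X,\phi],\phi],\phi]$ for $\phi\in\Gamma(K^*\otimes G)$ with the expansion of the condition $[X+\phi(X),Y+\phi(Y)]\in\Gamma(\mathrm{graph}(\phi))$ --- precisely the ``delicate point'' you flag but do not carry out, and the content of the cited result of Ji.
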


 We remark that $l_1$ is the differential associated to the flat $K$-connection\footnote{Explicitly, this is the flat $K$-connection of $G$ given by $\nabla_XY=\mathrm{pr}_G[X,Y]$ for all $X\in \Gamma(K), Y\in \Gamma(G)$.}  on $G$ which, under the identification $G\cong TM/K$, corresponds to the Bott connection from Section \ref{subsection: foliations - tangent map}. 
  The formulae for $l_1,l_2,l_3$ are spelled out in the following remark.

\begin{remark}\label{rem:Jisproof}
Ji \cite[Section 4]{Ji} derives this $L_{\infty}[1]$-multibrackets $l_1,l_2,l_3$ as follows: he views $TM=G\oplus K$ as a vector bundle over $K$ (with fibres isomorphic to those of $G$), and 
 applies  Voronov's derived bracket construction (see Appendix \ref{appendix: reminder L-infty}) to:
\begin{itemize}
\item  the graded Lie algebra $V$ of vector fields on $T[1]M$,
\item the abelian Lie subalgebra $\mathfrak{a}$  of vector fields which are  vertical and fibrewise constant with respect to the projection $T[1]M\to K[1]$,
\item the   Lie subalgebra  $\mathfrak{h}$ of vector fields that are tangent to the base manifold $K[1]$,
\item  the homological vector field $X$ on $T[1]M$ encoding the de~Rham differential on $M$ (notice that $X$ is tangent to $K[1]$ because $K$ is an involutive distribution). 
\end{itemize}
This construction delivers the multibrackets $l_1,l_2,l_3$ in the formulae below. To relate these formulas to the formulas obtained in \cite[Remark 4.16]{Ji}, one has to apply the isomorphism\footnote{The necessity to apply this isomorphism is related to the following piece of linear algebra. Let $V$ be a vector space, and consider $V[1]$ (a graded manifold with linear coordinates of degree $1$). The functions on $V[1]$ coincide with the exterior algebra $\wedge V^*$, and the vector fields on $V[1]$ coincide with $\wedge V^*\otimes V$ (in particular, degree $-1$ vector fields on $V[1]$ are just elements of $V$). Now take a $k$-multilinear skew-symmetric map on $V$ with values in $V$, i.e.~an element
$\alpha\in \wedge^kV^*\otimes V$. This element can also be viewed canonically as a (degree $k-1$) vector field on $V[1]$, which we denote by $X$. Consider the $k$-multilinear skew-symmetric map on $V$ with values in $V$ given by
$$v_1,\dots,v_k\mapsto [[\dots[X,\iota_{v_1}],\dots],\iota_{v_k}],$$
where $\iota_{v_i}$ is the (degree $-1$) vector field on $V[1]$ given by $v_i\in V$, and the bracket is the graded Lie bracket of vector fields on $V[1]$. 
The point is that this map \emph{differs from $\alpha$ by a factor of $(-1)^\frac{k(k+1)}{2}$}, as one can easily check in coordinates, and this factor is omitted  in \cite{Ji}.} 
of $\Gamma(\wedge K^*\otimes G)$ which acts on each homogeneous component $\Gamma(\wedge^k K^*\otimes G)$ via multiplications by $(-1)^\frac{k(k+1)}{2}$.

For all $\xi \in \Gamma(\wedge^k K^*\otimes G)[1], \psi \in \Gamma(\wedge^l K^*\otimes G)[1],
 \phi \in \Gamma(\wedge^m K^*\otimes G)[1]$ we have:

\begin{eqnarray*}
 l_1(\xi)(X_1,\dots,X_{k+1})&=&\sum_{i=1}^{k+1}(-1)^{i+1} \mathrm{pr}_G\Big[X_i\,,\,\xi(X_1,\dots,\hat{X_i},\dots,X_{k+1})\Big]\\
 &+& \sum_{i<j}(-1)^{i+j}\xi\Big([X_i,X_j],X_1,\dots,\hat{X_i},\dots,\hat{X_j},\dots,X_{k+1}\Big)\\ 
  l_2(\xi,\psi)(X_1,\dots,X_{k+l})&=&(-1)^{k}\sum_{\tau\in S_{k,l}}(-1)^{\tau} \mathrm{pr}_G\Big[\xi(
 X_{\tau(1)},\dots,X_{\tau(k)})\,,\,\psi(X_{\tau(k+1)},\dots,X_{\tau(k+l)})\Big]
 \\
  &+&(-1)^{k(l+1)} \sum_{\tau\in S_{l,1,k-1}}(-1)^{\tau} \xi\Big(\mathrm{pr}_K\Big[\psi( X_{\tau(1)},\dots,X_{\tau(l)})\,,\,X_{\tau(l+1)}\Big],\dots\\
  && \hspace{6cm} \dots,X_{\tau(l+2)},\dots,  X_{\tau(l+k)}\Big)
 \\
  &-& (-1)^{k} (\xi \leftrightarrow \psi, k\leftrightarrow l)\end{eqnarray*}
  \begin{align*}
 l_3(\xi,\psi,\phi)(X_1,\dots,&X_{k+l+m-1})=  (-1)^{m+k(l+m)}\cdot\\
\sum_{\tau\in S_{l,m,k-1}}(-1)^{\tau}  \xi\Big(\mathrm{pr}_K &\Big[
 \psi(X_{\tau(1)},\dots,X_{\tau(l)})\,,\,\phi(X_{\tau(l+1)},\dots,X_{\tau(l+m)})\Big],
,\dots,X_{\tau(l+m+k-1)}\Big) \pm\circlearrowleft
\end{align*}
Here $X_i\in \Gamma(K)$,  $\mathrm{pr}_G$ is the projection $TM=G\oplus K\to G$, and similarly for $\mathrm{pr}_K$. $S_{i,j,k}$ denotes the set of permutations $\tau$ of $i+j+k$ elements such that the order is preserved within each block:
$\tau(1)<\cdots<\tau(i), \tau(i+1)<\cdots<\tau(i+j), \tau(i+j+1)<\cdots<\tau(i+j+k)
$. The symbol
$(\xi \leftrightarrow \psi, k\leftrightarrow l)$ denotes the sum just above it, switching $\xi$ with $\psi$ and $k$ with $l$. The symbol $\circlearrowleft$ denotes cyclic permutations in $\xi,\psi,\phi$.
\end{remark}

\begin{proof}[Proof of Proposition \ref{prop:Ji}]
Given Remark \ref{rem:Jisproof}, we just have to address the statement about Maurer-Cartan elements, which holds by \cite[Theorem 4.14]{Ji}.
\end{proof}

 The $L_{\infty}[1]$-algebra structure is compatible with the 
$\Gamma(\wedge K^*)$-module structure, as described for instance in \cite[Theorem 2.4 and 2.5]{VitaglianoFol}. We spell this out:
 
\begin{lemma}\label{lem:Jider}
With respect to the (left) $\Gamma(\wedge K^*)$-module structure on $\Gamma(\wedge K^*\otimes G)[1]$,
the multibrackets of the $L_{\infty}[1]$-algebra structure described in Proposition \ref{prop:Ji} have the following properties:  
\begin{align*}
  l_1(\alpha\cdot \xi)&=(d\alpha)|_{\wedge K}\cdot \xi+ (-1)^{a}\alpha \cdot l_1(\xi)\\
 l_2( \xi, \alpha\cdot \psi)&=(-1)^{k}(\iota_{\xi} d\alpha)|_{\wedge K}\cdot \psi+ (-1)^{ka}\alpha \cdot l_2(\xi,\psi)\\
  l_3(\xi, \psi, \alpha\cdot  \phi)&=(-1)^{k+l+1}\iota_{\xi}\iota_{\psi} d\alpha\cdot \phi + (-1)^{(k+l+1)a}\alpha \cdot l_3(\xi, \psi,\phi).
\end{align*}
for all $\alpha \in \Gamma(\wedge^a K^*)$ and 
 $\xi \in \Gamma(\wedge^k K^*\otimes G)[1], \psi \in \Gamma(\wedge^l K^*\otimes G)[1],
 \phi \in \Gamma(\wedge K^*\otimes G)[1]$.
 \end{lemma}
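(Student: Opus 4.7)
The plan is to derive the three identities from the higher-derived-bracket description of $l_1, l_2, l_3$ recalled in Remark \ref{rem:Jisproof}. Writing $\hat{\xi}\in \mathfrak{a}$ for the vector field on $T[1]M$ corresponding to $\xi \in \Gamma(\wedge K^* \otimes G)$, and $X$ for the homological vector field encoding the de~Rham differential of $M$, the multibrackets are, up to the sign normalisation of \cite{Ji},
$$l_1(\xi)=P[X,\hat{\xi}], \qquad l_2(\xi,\psi)=P\bigl[[X,\hat{\xi}],\hat{\psi}\bigr], \qquad l_3(\xi,\psi,\phi)=P\bigl[[[X,\hat{\xi}],\hat{\psi}],\hat{\phi}\bigr],$$
where $P$ is the projection onto $\mathfrak{a}$ along the subalgebra $\mathfrak{h}$ of vector fields tangent to $K[1]$. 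Two structural facts will drive the proof: under the identification $\mathfrak{a} \cong \Gamma(\wedge K^* \otimes G)$, the module product $\alpha \cdot \xi$ corresponds to multiplying $\hat{\xi}$ by the pullback of $\alpha \in C^\infty(K[1])$ to $T[1]M$; and each element of $\mathfrak{a}$, being vertical with respect to $T[1]M \to K[1]$, annihilates such pullbacks, whereas $X(\alpha) = d\tilde{\alpha}$ for any form extension $\tilde{\alpha} \in \Omega^a(M)$.

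First, I would dispatch the $l_1$-identity by the graded Leibniz rule $[X, \alpha \hat{\xi}] = X(\alpha)\,\hat{\xi} + (-1)^a \alpha\,[X, \hat{\xi}]$ and projecting through $P$: the observation that $P$ extracts the $\wedge K^*$-component gives $P(d\tilde{\alpha} \cdot \hat{\xi}) = d\alpha|_{\wedge K} \cdot \xi$. Equivalently, the explicit formula of Remark \ref{rem:Jisproof} for $l_1$ is just the Chevalley--Eilenberg differential attached to the Bott $K$-connection on $G$, which satisfies the stated Leibniz rule by construction.

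Next, for $l_2$ and $l_3$, I would iterate the same Leibniz rule one and two further levels respectively, pulling the $\alpha$-factor past the outermost bracket. At the $l_2$-level, using $\hat{\xi}(\alpha)=0$ one gets $[X,\hat{\xi}](\alpha) = -(-1)^k \hat{\xi}(d\tilde{\alpha})$, and a direct computation in coordinates adapted to $TM=K\oplus G$ identifies $\hat{\xi}(d\tilde{\alpha})$ with $(\iota_\xi d\alpha)|_{\wedge K} \in \Gamma(\wedge K^*)$; projection then produces the first summand of the $l_2$-formula, while the remaining term yields the second summand after a sign calculation. The $l_3$-identity follows analogously: $[[X,\hat{\xi}],\hat{\psi}](\alpha)$, computed by iterating the graded bracket Leibniz rule together with the vanishings $\hat{\xi}(\alpha) = \hat{\psi}(\alpha) = 0$, reduces to a multiple of $\iota_\xi \iota_\psi d\alpha$, giving the required first summand.

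The main obstacle will be the sign bookkeeping: one must track simultaneously the Koszul signs from the iterated graded Lie bracket of vector fields, the degree shift $[1]$ relating form-degree to the $L_\infty[1]$-grading, and the sign normalisations of Voronov's higher derived-bracket construction as employed in \cite[Remark 4.16]{Ji} (including the $(-1)^{k(k+1)/2}$ twist flagged in the footnote of Remark \ref{rem:Jisproof}). A self-contained alternative, which bypasses the derived-bracket formalism at the cost of longer calculation, is to substitute $\alpha\cdot\psi$ (resp.~$\alpha\cdot\phi$) directly into the explicit formulas of Remark \ref{rem:Jisproof}, expand the wedge product via the shuffle formula, and apply the Leibniz rule of $d$ on $M$ to the terms containing $\mathrm{pr}_G[\,\cdot\,,\,\cdot\,]$; the contraction expressions $(\iota_\xi d\alpha)|_{\wedge K}$ and $\iota_\xi \iota_\psi d\alpha$ then arise precisely from the derivative of $\alpha$ along the $G$-valued outputs of the arguments.
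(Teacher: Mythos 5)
Your proposal takes essentially the same route as the paper: the paper's proof of this lemma is a one-line assertion that the identities follow from a computation using the derived-bracket characterization of $l_1,l_2,l_3$ via vector fields on $T[1]M$ from Remark \ref{rem:Jisproof}, with derivation from the explicit formulas offered as the alternative --- exactly the two routes you describe, and your elaboration via the graded Leibniz rule together with $\hat{\xi}(\alpha)=0$ and $X(\alpha)=d\tilde{\alpha}$ is the intended computation. Only double-check the intermediate sign in $[X,\hat{\xi}](\alpha)$ (with $|\hat{\xi}|=k-1$ the Leibniz rule gives $(-1)^{k}\hat{\xi}(d\tilde{\alpha})$ rather than $-(-1)^{k}\hat{\xi}(d\tilde{\alpha})$), a point you have already flagged as part of the sign bookkeeping.
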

\begin{proof}
This follows from a computation using the characterization of the  multibrackets in terms of  the graded Lie algebra of vector fields on $T[1]M$, as described in Remark \ref{rem:Jisproof}.
Alternatively, one can derive it from the explicit formulas for $l_1$, $l_2$ and $l_3$ given above.
\end{proof}

\begin{remark}[The involutive case]
When $G$ is involutive, the trinary bracket $l_3$ vanishes and, after a degree shift, we obtain a dg Lie algebra.
In that case, {around every point of $M$ one can find a local frame of $K$ consisting of}  
sections $X\in \Gamma(K)$ whose flow preserves $G$ (i.e.~$[X,\Gamma(G)]\subset \Gamma(G)$).
If $X_i$ are such sections we have
$$ l_2(\xi,\psi)(X_1,\dots,X_{k+l})=(-1)^{k(l+1)}\sum_{\tau\in S_{k,l}}(-1)^{\tau} \mathrm{pr}_G\Big[\xi(
 X_{\tau(l+1)},\dots,X_{\tau(l+k)})\,,\,\psi(X_{\tau(1)},\dots,X_{\tau(l)})\Big]
$$
for any $\xi \in\Gamma(\wedge^k K^*\otimes G)[1]$ and $\psi \in\Gamma(\wedge^l K^*\otimes G)[1]$. 
 
In the involutive case it is easy to check directly the following claim made in Proposition \ref{prop:Ji}: 
the graph of $\phi\in \Gamma(K^*\otimes G)[1]$ is involutive if{f} $\phi$ satisfies the Maurer-Cartan equation of $(\Gamma(\wedge K^*\otimes G)[1],l_1,-l_2,l_3)$, i.e.~if{f}
$$l_1(\phi)-\frac{1}{2}l_2(\phi,\phi)=0.$$ Indeed, given $X_1,X_2$ as above, writing out $[X_1+\phi(X_1),X_2+\phi(X_2)]$
we obtain $[X_1,X_2]$ plus 3 terms lying in $\Gamma(G)$. Hence $\mathrm{graph}(\phi)$ is involutive if{f} 
 $$0=-\phi([X_1,X_2])+[X_1,\phi(X_2)]-[X_2,\phi(X_1)]+[\phi(X_1),\phi(X_2)].$$
 The first 3 terms combine to $(l_1(\phi))(X_1,X_2)$, while the last term is 
$-\frac{1}{2}l_2(\phi,\phi)(X_1,X_2)$. \end{remark}

\subsection{A strict morphism of \liones}
\label{subsection: strict morphism}

As earlier, let $\eta$ be a pre-symplectic form on $M$, and choose a complement $G$ to 
$K=\ker(\eta)$. Let $Z \in\Gamma(\wedge^2 G)$ be the bivector field corresponding to the restriction of $\eta$ to $G$, so $Z^{\sharp}:=-(\eta^\sharp|_G)^{-1}\colon G^*\to G$. Recall from Corollary \ref{cor:liideal} that $F^{2}(\Omega(M))=\Omega_\hor(M)\cdot \Omega_\hor(M)$ gives an $L_{\infty}[1]$-ideal of $\Omega_\hor(M)[2]$. This suggests the following result, where we use the notation $\Omega(K,G):=\Gamma(\wedge K^*\otimes G)$ and similarly for $\Omega(K,G^*)$:
\begin{theorem}\label{theorem: characteristic foliation}
 The composition 
$$ q[2]: \Omega_\hor(M)[2] \rightarrow \faktor{\Omega_\hor(M)[2]}{F^2(\Omega(M))[2]} \cong \Omega(K,G^*)[1] \stackrel{Z^{\sharp}[1]}{\longrightarrow} \Omega(K,G)[1]$$
is a strict morphism of $L_\infty[1]$-algebras,
where the domain is the Koszul $L_{\infty}[1]$-algebra with multibrackets 
$\lambda_1,\lambda_2,\lambda_3$, see Theorem \ref{theorem: construction - Koszul L-infty}, and
 the target $\Omega(K,G)[1]$ is endowed with the multibrackets
$l_1,-l_2,l_3$.
\end{theorem}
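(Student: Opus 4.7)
The map $q[2]$ factors as the quotient $\pi \colon \Omega_{\hor}(M)[2] \to \bigl(\Omega_{\hor}(M)/F^{2}\Omega(M)\bigr)[2]$ composed with $Z^{\sharp}[1]$, under the identification $\bigl(\Omega_{\hor}(M)/F^{2}\Omega(M)\bigr)[2] \cong \Omega(K,G^{*})[1]$ coming from the decomposition $\Omega(M) = \Gamma(\wedge G^{*} \otimes \wedge K^{*})$. Corollary \ref{cor:liideal}(2) asserts that $\pi$ is automatically a strict morphism of $L_{\infty}[1]$-algebras, since $F^{2}\Omega(M)[2]$ is an $L_{\infty}[1]$-ideal of $\Omega_{\hor}(M)[2]$. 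The task therefore reduces to showing that the bundle isomorphism $Z^{\sharp}[1]$ intertwines the induced quotient multibrackets $(\bar{\lambda}_{1},\bar{\lambda}_{2},\bar{\lambda}_{3})$ on $\Omega(K,G^{*})[1]$ with $(l_{1},-l_{2},l_{3})$ on $\Omega(K,G)[1]$.

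The strategy is a generator-level reduction driven by derivation properties. Both $L_{\infty}[1]$-algebras are modules over the foliated de Rham algebra $\Omega(K)$, and in both cases the multibrackets are derivations in each argument with respect to this module structure: on the Koszul side by Remarks \ref{rem:Kos2der}--\ref{rem:Kos3der} (which pass to the quotient, as $F^{2}\Omega(M)$ is itself an ideal of $\Omega(M)$), and on the foliation side by Lemma \ref{lem:Jider}. Since $Z^{\sharp} \colon G^{*} \to G$ is $\smooth^{\infty}(M)$-linear, the map $Z^{\sharp}[1]$ is $\Omega(K)$-linear; after aligning the two sign conventions, the derivation rules on the two sides are formally identical under $Z^{\sharp}[1]$. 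Consequently, the strict-morphism identities propagate from the generating subspace $\Gamma(G^{*}) \subset \Omega(K,G^{*})$, and it suffices to verify them there.

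On generators, for $\xi, \xi_{1}, \xi_{2}, \xi_{3} \in \Gamma(G^{*})$ with $Y_{i} := Z^{\sharp}\xi_{i} \in \Gamma(G)$, I handle the three multibrackets in turn. The unary identity $Z^{\sharp}\bar{\lambda}_{1}(\xi) = l_{1}(Z^{\sharp}\xi)$ follows from expanding $d\xi(X,Y') = X\xi(Y') - \xi([X,Y'])$ for $X \in \Gamma(K)$, $Y' \in \Gamma(G)$, applying $Z^{\sharp}$, and using both the defining relation $\eta(Z^{\sharp}\xi,Y') = -\xi(Y')$ and the closedness identity $d\eta(X,Z^{\sharp}\xi,Y') = 0$; this produces $l_{1}(Y)(X) = \mathrm{pr}_{G}[X,Y]$ as in the Bott differential. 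The binary identity $Z^{\sharp}\bar{\lambda}_{2}(\xi_{1},\xi_{2}) = -l_{2}(Y_{1},Y_{2})$ is, up to the sign coming from the $(-1)^{|\alpha|}$-convention of $\lambda_{2}$, exactly the content of Lemma \ref{lem:morG*}: $Z^{\sharp}[\xi_{1},\xi_{2}]_{Z} = \mathrm{pr}_{G}[Y_{1},Y_{2}]$. Finally, the ternary identity reads $0=0$ on $\Gamma(G^{*})^{\odot 3}$: the Koszul side vanishes by Definition \ref{definition: trinary bracket} and Lemma \ref{lem:noggg} (the $\wedge^{3}G$-component of $[Z,Z]$ is zero), while the foliation side vanishes by inspection of the formulas of Remark \ref{rem:Jisproof} with $k=l=m=0$.

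The main technical obstacle is the sign bookkeeping: I must carefully match the $(-1)^{|\alpha|}$-factor appearing in $\lambda_{2}$ (see Proposition \ref{proposition: extended Koszul}), the overall sign $(-1)^{|\beta|+1}$ appearing in $\lambda_{3}$, and the various Koszul-type signs in Lemma \ref{lem:Jider}, so that the $\Omega(K)$-derivation rules on the two sides \emph{coincide} under $Z^{\sharp}[1]$ and the generator-level check propagates correctly to all of $\Omega(K,G^{*})[1]$. Once this alignment is performed, the geometric heart of the theorem is already contained in Lemmas \ref{lem:morG*} and \ref{lem:noggg}.
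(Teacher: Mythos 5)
Your proposal is correct and follows essentially the same route as the paper: reduce to generators in $\Gamma(G^*)$ via the derivation properties of the multibrackets over $\Omega(K)$, then invoke the Bott chain map of Corollary \ref{rem:Bottchainmap}, Lemma \ref{lem:morG*}, and Lemma \ref{lem:noggg} for the three brackets --- this is exactly what Propositions \ref{prop:intertwo} and \ref{prop:interthree} carry out. Be aware only that what you file under ``sign bookkeeping'' includes two substantive identities needed to make the correction terms in the two derivation rules coincide, namely $[\xi,\alpha]_Z=\iota_{Z^\sharp\xi}\,d\alpha$ and $\lambda_3(\xi_1,\xi_2,\alpha)=\iota_{Z^\sharp\xi_2}\iota_{Z^\sharp\xi_1}d\alpha$ for $\alpha\in\Gamma(\wedge K^*)$; the latter (Lemma \ref{lem:Cartancalc} in the paper) rests on the full Equation \eqref{eq:sharpnothom}, i.e.\ on Lemma \ref{lem:bivector}, and not only on Lemmas \ref{lem:morG*} and \ref{lem:noggg}.
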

\begin{remark}
\begin{itemize}
\item[i)] For every element of $\Omega_\hor(M)=\Gamma(\wedge^{\bullet} K^*\otimes \wedge^{\ge 1} G^*)$, the first map in the above composition 
simply selects the component in  
$\Omega(K,G^*)=\Gamma(\wedge^{\bullet} K^*\otimes G^*)$.
Hence $q$ maps a Maurer-Cartan element $(\mu,\sigma)\in  \Gamma(K^*\otimes G^*) \oplus\Gamma(\wedge^2G^*)$ to the Maurer-Cartan element $Z^{\sharp}\mu^{\sharp}\in \Gamma(\mathrm{Hom}(K,G))$,  where $\mu^{\sharp}\colon K\to G^*, X\mapsto \iota_X\mu$.
Notice that, via Theorem \ref{theorem: main result} and Proposition \ref{prop:Ji}, this is consistent with the fact that the kernel of $(\mu,\sigma)$ is the graph of $Z^{\sharp}\mu^{\sharp}$ (see Theorem \ref{theorem: almost Dirac structures}) and with the well-known fact that the kernels of pre-symplectic forms are involutive.
\item[ii)] A consequence of Theorem \ref{theorem: characteristic foliation} is that the map
$$Z^{\sharp}[2]: \faktor{\Omega_\hor(M)[2]}{F^2\Omega(M)[2]} \cong \Omega(K,G^*)[1] \longrightarrow \Omega(K,G)[1]$$
is a strict isomorphism of $L_{\infty}[1]$-algebras. Here
$\faktor{\Omega_\hor(M)[2]}{F^2(\Omega(M))[2]}$ is endowed with the $L_{\infty}[1]$-algebra structure inherited from $\Omega_\hor(M)[2]$. 
\end{itemize}
\end{remark}

We need some preparation before giving the proof of Theorem \ref{theorem: characteristic foliation}. Notice that for $\alpha \in \Omega(K)$ and $\xi\in \Gamma(G^*)$,
the element $\alpha \cdot  \xi\in \Gamma(\wedge K^*\otimes G^*)$ is mapped to $\alpha \cdot Z^{\sharp} \xi\in \Omega(K,G)$ under $q$. In the next three statements, we write $\sharp$ instead of $Z^{\sharp}$,
and suppress the degree shifts $[2]$ for the sake of readability.

\begin{proposition}\label{prop:intertwo}
 For all $\xi_1,\xi_2\in \Gamma(G^*)$ and $\alpha_1,\alpha_2\in \Gamma(\wedge K^*)$ we have
  
$$q\left(\lambda_2 (\alpha_1\cdot\xi_1,\alpha_2\cdot\xi_2)\right)=-l_2(\alpha_1 \cdot\sharp \xi_1,\alpha_2 \cdot\sharp \xi_2).$$

\end{proposition}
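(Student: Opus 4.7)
The plan is to expand both sides of the claimed identity using the graded-derivation properties available to each operation, and then match the resulting terms after projecting by $q$. The crucial observation is that $q$ picks out the component of $\Omega_{\hor}(M)$ which is \emph{linear} in $G^*$ and applies $\sharp = Z^\sharp$ there, so any term landing in $F^2\Omega(M)=\Gamma(\wedge K^*\otimes \wedge^{\geq 2}G^*)$ is killed by $q$. This filtration argument will dispose of most of the terms one generates by expansion.

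For the left-hand side, I apply the bi-derivation property of the Koszul bracket with respect to the wedge product (Remark \ref{rem:Kos2der}) to $[\alpha_1\wedge\xi_1,\alpha_2\wedge\xi_2]_Z$. This generates four families of terms (up to signs):
\begin{itemize}
\item $[\alpha_1,\alpha_2]_Z\wedge\xi_1\wedge\xi_2$, which is automatically in $F^2\Omega(M)$ and killed by $q$;
\item $\alpha_1\wedge[\xi_1,\alpha_2]_Z\wedge\xi_2$ and its mirror image with $(1,2)$ swapped, which I rewrite via Cartan calculus as $[\xi_i,\alpha_j]_Z = \iota_{\sharp\xi_i}d\alpha_j$ using $\sharp\alpha_j = 0$ and $\iota_{\sharp\xi_i}\alpha_j = 0$ (because $\alpha_j\in\Gamma(G^\circ)$ and $\sharp\xi_i\in\Gamma(G)$); only the $\Gamma(\wedge K^*)$-component of $\iota_{\sharp\xi_i}d\alpha_j$ survives the projection $q$ when paired with the remaining $\xi$;
\item the ``leading'' term $\alpha_1\wedge\alpha_2\wedge[\xi_1,\xi_2]_Z$, where by Lemma \ref{lem:g*g*g*} we have $[\xi_1,\xi_2]_Z\in\Gamma(G^*)$, so it contributes under $q$ to $\alpha_1\wedge\alpha_2\otimes \sharp[\xi_1,\xi_2]_Z$, and by Lemma \ref{lem:morG*} this equals $\alpha_1\wedge\alpha_2\otimes \mathrm{pr}_G[\sharp\xi_1,\sharp\xi_2]$.
\end{itemize}

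For the right-hand side I expand $l_2(\alpha_1\cdot\sharp\xi_1,\alpha_2\cdot\sharp\xi_2)$ by applying Lemma \ref{lem:Jider} twice, once in each slot. This produces the ``leading'' term $\alpha_1\wedge\alpha_2\cdot l_2(\sharp\xi_1,\sharp\xi_2)$, whose value at no arguments (since $\sharp\xi_i\in\Gamma(G)$ sits in the $k=0$ component of Ji's complex) is, modulo conventional factors, $\mathrm{pr}_G[\sharp\xi_1,\sharp\xi_2]$; and two anomaly correction terms of the form $(\iota_{\sharp\xi_i}d\alpha_j)|_{\wedge K}\cdot\sharp\xi_k$ coming from the $\Gamma(\wedge K^*)$-module rule. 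These are precisely the same $\iota_{\sharp\xi_i}d\alpha_j$ contributions that appeared on the left-hand side after projection, which is the reason the two sides agree beyond the leading bracket term.

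The final step is to check that all the signs and $(-1)^{|\alpha|}$-factors from the identification $\lambda_2 = (-1)^{|\alpha|}[\cdot,\cdot]_Z$ (Proposition \ref{proposition: extended Koszul}), together with the overall minus sign in the statement, match the signs produced on the $l_2$-side by Lemma \ref{lem:Jider}. The main obstacle is therefore not conceptual but combinatorial: the bookkeeping of Koszul signs (with shifts $[1]$ vs.\ $[2]$, graded skew-symmetry in the shifted grading, and the ordering of $K^*$- and $G^*$-factors in $\Gamma(\wedge K^*\wedge G^*)\cong \Gamma(\wedge K^*\otimes G^*)$) is delicate. Once organized, however, the identity reduces to the pointwise compatibility $\sharp[\xi_1,\xi_2]_Z = \mathrm{pr}_G[\sharp\xi_1,\sharp\xi_2]$ from Lemma \ref{lem:morG*} on the leading term and to the tautological equality of the $\iota_{\sharp\xi_i}d\alpha_j$ anomaly terms.
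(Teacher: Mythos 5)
Your proposal is correct and follows essentially the same route as the paper's proof: expand both sides via the biderivation property of $[\cdot,\cdot]_Z$ and Lemma \ref{lem:Jider}, discard the terms landing in $F^2\Omega(M)$ under $q$, match the leading terms via $\sharp[\xi_1,\xi_2]_Z=\mathrm{pr}_G[\sharp\xi_1,\sharp\xi_2]$ (Lemma \ref{lem:morG*}), and match the remaining terms via the identity $[\xi,\alpha]_Z=\iota_{\sharp\xi}d\alpha$. The only place you are terser than the paper is that last identity, which for $\alpha\in\Gamma(\wedge^{a}K^*)$ with $a>1$ the paper establishes by induction on the degree using the derivation property (the base case being formula \eqref{eq:KoszulBracket1f}) rather than by a one-line Cartan-calculus observation.
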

\begin{proof}
Using the biderivation property of the Koszul bracket $[\cdot,\cdot]_Z$ (see Remark \ref{rem:Kos2der}) and Lemma \ref{lem:Jider} for $l_2$, we obtain
\begin{align*}
q([\alpha_1\xi_1,\alpha_2\xi_2]_Z)&=\alpha_1[ \xi_1,\alpha_2]_Z|_{\wedge K}\cdot \sharp \xi_2
-(-1)^{|\alpha_1||\alpha_2|}\alpha_2[ \xi_2,\alpha_1]_Z|_{\wedge K}\cdot \sharp \xi_1
+\alpha_1\alpha_2\cdot \sharp [\xi_1,\xi_2]_Z\\
(-1)^{|\alpha_1|}l_2(\alpha_1\sharp \xi_1,\alpha_2 \sharp \xi_2)&=\alpha_1(\iota_{\sharp \xi_1} d\alpha_2)|_{\wedge K}\cdot \sharp \xi_2
-(-1)^{|\alpha_1||\alpha_2|}\alpha_2(\iota_{\sharp \xi_2} d\alpha_1)|_{\wedge K}\cdot \sharp \xi_1
+\alpha_1\alpha_2 \cdot l_2(\sharp \xi_1,\sharp \xi_2).
\end{align*}

Since $\lambda_2 (\alpha_1\cdot\xi_1,\alpha_2\cdot\xi_2)=-(-1)^{|\alpha_1|}[\alpha_1\xi_1,\alpha_2\xi_2]_Z$, we have to prove that the two expressions above coincide.
 The right-most terms coincide by Lemma \ref{lem:morG*}, hence it suffices to prove that 
$[ \xi_1,\alpha_2]_Z=\iota_{\sharp \xi_1} d\alpha_2$. This identity is proven by induction over the degree of $\alpha_2$, using that the case in which  $\alpha_2$ is a 1-form   holds by Equation \eqref{eq:KoszulBracket1f}, page \ref{eq:KoszulBracket1f}.\end{proof}

 Now we consider the trinary brackets.

\begin{lemma}\label{lem:Cartancalc}
For all $\xi_1,\xi_2\in \Gamma(G^*)$ and $\alpha \in \Gamma(\wedge K^*)$ we have
$\lambda_3(\xi_1,\xi_2,\alpha)
=\iota_{\sharp \xi_2}\iota_{\sharp \xi_1}d\alpha$.
\end{lemma}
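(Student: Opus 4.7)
The plan is to expand the left-hand side using the explicit formula for $\lambda_3$ from Proposition \ref{proposition: extended Koszul}, noting that $|\xi_1|=|\xi_2|=1$ kills the sign, so
\[
\lambda_3(\xi_1,\xi_2,\alpha)=(\xi_1^\sharp\wedge\xi_2^\sharp\wedge\alpha^\sharp)\bigl(\tfrac{1}{2}[Z,Z]\bigr).
\]
The decisive input is Lemma \ref{lem:noggg}, together with Remark \ref{rem:nokkk}, which tells us that $\tfrac{1}{2}[Z,Z]\in \Gamma(\wedge^2 G\otimes K)$. Writing (locally, or pointwise) $\tfrac{1}{2}[Z,Z]=\sum_i P_i\wedge X_i$ with $P_i\in \Gamma(\wedge^2 G)$ and $X_i\in \Gamma(K)$, the fact that $\xi_1,\xi_2\in \Gamma(G^*)=\Gamma(K^\circ)$ annihilate $K$ kills all terms in the permutation sum defining $\xi_1^\sharp\wedge\xi_2^\sharp\wedge\alpha^\sharp$ except the two that pair the $\xi_i$'s with the $G$-part. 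This reduces the expression to
\[
\sum_i (\iota_{\xi_2}\iota_{\xi_1}P_i)\,\iota_{X_i}\alpha \;=\; \iota_{\frac{1}{2}\iota_{\xi_2}\iota_{\xi_1}[Z,Z]}\,\alpha.
\]

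Next I will identify the vector field $\tfrac{1}{2}\iota_{\xi_2}\iota_{\xi_1}[Z,Z]$. Combining Lemma \ref{lem:bivector} with Lemma \ref{lem:morG*} (i.e.\ $\sharp[\xi_1,\xi_2]_Z=\mathrm{pr}_G[\sharp\xi_1,\sharp\xi_2]$) gives
\[
\tfrac{1}{2}\iota_{\xi_2}\iota_{\xi_1}[Z,Z]=\sharp[\xi_1,\xi_2]_Z-[\sharp\xi_1,\sharp\xi_2]=-\mathrm{pr}_K[\sharp\xi_1,\sharp\xi_2].
\]
Therefore $\lambda_3(\xi_1,\xi_2,\alpha)=-\iota_{\mathrm{pr}_K[\sharp\xi_1,\sharp\xi_2]}\alpha$.

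For the right-hand side, observe that $\alpha\in \Gamma(\wedge K^*)\cong \Gamma(\wedge G^\circ)$ and $\sharp\xi_i\in \Gamma(G)$, so $\iota_{\sharp\xi_i}\alpha=0$. Cartan's magic formula therefore gives $\iota_{\sharp\xi_1}d\alpha=\mathcal{L}_{\sharp\xi_1}\alpha$, and then the standard identity $[\iota_Y,\mathcal{L}_X]=\iota_{[X,Y]}$ yields
\[
\iota_{\sharp\xi_2}\iota_{\sharp\xi_1}d\alpha=\iota_{\sharp\xi_2}\mathcal{L}_{\sharp\xi_1}\alpha=\mathcal{L}_{\sharp\xi_1}\iota_{\sharp\xi_2}\alpha-\iota_{[\sharp\xi_1,\sharp\xi_2]}\alpha=-\iota_{[\sharp\xi_1,\sharp\xi_2]}\alpha.
\]
Decomposing $[\sharp\xi_1,\sharp\xi_2]$ along $TM=G\oplus K$, the $G$-component contracts trivially with $\alpha\in \Gamma(\wedge G^\circ)$, leaving $-\iota_{\mathrm{pr}_K[\sharp\xi_1,\sharp\xi_2]}\alpha$, which matches the left-hand side.

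The only subtlety I anticipate is keeping the Koszul signs in the definition of $\xi_1^\sharp\wedge\xi_2^\sharp\wedge\alpha^\sharp$ straight; since the $\xi_i^\sharp$ take values in degree $0$ (scalars) this turns out to be harmless, but one should write it out once to be sure. Everything else is a short Cartan-calculus manipulation plus the two structural lemmas from Section \ref{section: bivec}.
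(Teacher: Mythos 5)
Your proof is correct and follows essentially the same route as the paper's: both reduce the right-hand side to $-\iota_{[\sharp\xi_1,\sharp\xi_2]}\alpha$ via Cartan calculus, invoke Equation \eqref{eq:sharpnothom} to trade $[\sharp\xi_1,\sharp\xi_2]$ for $\tfrac{1}{2}\iota_{\xi_2}\iota_{\xi_1}[Z,Z]$, and use $[Z,Z]\in\Gamma(\wedge^2G\otimes K)$ to match this with the formula for $\lambda_3$. You merely spell out the intermediate steps (the reduction of the permutation sum and the identification with $-\mathrm{pr}_K[\sharp\xi_1,\sharp\xi_2]$ via Lemma \ref{lem:morG*}) that the paper leaves implicit.
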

\begin{proof}
By the definition in Proposition \ref{proposition: extended Koszul}  we have
$\lambda_3(\xi_1,\xi_2,\alpha)
=\frac{1}{2}(\xi_1^{\sharp}\wedge\xi_2^{\sharp}\wedge\alpha^{\sharp}) {[Z,Z]}$. 
Now
$$\iota_{\sharp \xi_2}\iota_{\sharp \xi_1}d\alpha=-\iota_{[\sharp \xi_1,\sharp \xi_2]}\alpha
=\frac{1}{2}\iota_{(\iota_{\xi_2}\iota_{\xi_1}[Z,Z])}\alpha,$$ 
using some Cartan calculus and the fact that $\sharp \xi_i\in \Gamma(G)$ in the first equality
and Equation \eqref{eq:sharpnothom}, page \pageref{eq:sharpnothom} in the second. Using the fact that $[Z,Z]$ is a section of $\wedge^2G\otimes K$ (see Section \ref{section: bivec}), this concludes the proof.
\end{proof}
 
\begin{proposition}\label{prop:interthree}
 For all $\xi_1,\xi_2,\xi_3\in \Gamma(G^*)$ and $\alpha_1,\alpha_2,\alpha_3\in \Gamma(\wedge K^*)$ we have
\begin{equation}\label{eq:q3}
q\;(\lambda_3(\alpha_1\cdot\xi_1,\alpha_2\cdot\xi_2,\alpha_3\cdot\xi_3))=l_3(\alpha_1 \sharp \xi_1,\alpha_2 \sharp \xi_2,\alpha_3 \sharp \xi_3).
\end{equation}
\end{proposition}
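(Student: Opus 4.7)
My plan is to expand both sides using the respective derivation properties ($\lambda_3$ is a derivation in each argument by Remark \ref{rem:Kos3der}; $l_3$ obeys the derivation rule of Lemma \ref{lem:Jider}), show that all but a handful of terms vanish on each side, and verify term-by-term agreement.

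For the left-hand side, I will apply the derivation property of $\lambda_3$ in each of its three arguments, obtaining eight terms of the form $\pm\,(\text{outer factor})\cdot \lambda_3(\epsilon_1,\epsilon_2,\epsilon_3)$ with $\epsilon_j\in\{\alpha_j,\xi_j\}$. The crucial observation is that such a $\lambda_3$ vanishes unless exactly one of its arguments lies in $\Gamma(\wedge K^*)$ and the other two lie in $\Gamma(G^*)$. Indeed, $\lambda_3(\beta_1,\beta_2,\beta_3)=\pm(\beta_1^\sharp\wedge\beta_2^\sharp\wedge\beta_3^\sharp)(\tfrac12[Z,Z])$, and by Lemma \ref{lem:noggg} combined with Remark \ref{rem:nokkk} we have $[Z,Z]\in\Gamma(\wedge^2G\otimes K)$; since $\iota_Y$ kills sections of $\wedge K^*$ for $Y\in\Gamma(G)$ and $\iota_X$ kills sections of $G^*$ for $X\in\Gamma(K)$, the two $G$-legs of $[Z,Z]$ must be absorbed by the $\xi$'s and the single $K$-leg by the $\alpha$. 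Only three of the eight terms survive, and each is computed by Lemma \ref{lem:Cartancalc} together with the graded skew-symmetry of $\lambda_3$ (used to place the $\alpha$-argument in any of the three slots). Applying $q$ then converts the unique outer $\xi_k$ into $\sharp\xi_k\in\Gamma(G)$, yielding an explicit sum of three terms of shape $\pm\,\alpha_i\alpha_j\,\iota_{\sharp\xi_i}\iota_{\sharp\xi_j}d\alpha_k\cdot\sharp\xi_k$.

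For the right-hand side, I will apply Lemma \ref{lem:Jider} to successively peel off the $\alpha_i$ factors. The stated rule concerns the third argument; analogous rules for the first two arguments follow from the graded skew-symmetry of $l_3$. This produces a sum of two kinds of terms: those proportional to $l_3(\sharp\xi_1,\sharp\xi_2,\sharp\xi_3)$ with the $\alpha_i$'s outside, and boundary terms extracting a single $d\alpha_k$ via the factor $(-1)^{k+l+1}\iota_\xi\iota_\psi d\alpha_k\cdot\phi$ of Lemma \ref{lem:Jider}. The first kind vanishes, since $\sharp\xi_i\in\Gamma(G)=\Gamma(\wedge^0K^*\otimes G)$ corresponds to $k=l=m=0$ in the formula of Remark \ref{rem:Jisproof}, which would require $k+l+m-1=-1$ input vectors. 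Unwinding the convention $\iota_{\omega\otimes Y}=\omega\wedge\iota_Y$ and using $\iota_{\sharp\xi_i}\alpha_j=0$ (since $\sharp\xi_i\in\Gamma(G)$ and $\alpha_j\in\Gamma(\wedge K^*)$), the surviving boundary terms match, term by term, those obtained on the left.

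The principal obstacle I anticipate is sign bookkeeping. Each application of the derivation rules introduces degree-dependent Koszul signs; switching arguments of $l_3$ to access the derivation rule in the first or second slot introduces further signs from graded skew-symmetry; and the anti-commutativity of the iterated interior products $\iota_{\sharp\xi_i}\iota_{\sharp\xi_j}$ must be tracked carefully. A systematic case-by-case verification, similar in spirit to the proof of Proposition \ref{prop:intertwo}, should confirm that all these signs combine consistently and establish equation \eqref{eq:q3}.
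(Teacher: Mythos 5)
Your proposal is correct and follows essentially the same route as the paper's proof: expand the left side via the derivation property of $\lambda_3$, use $[Z,Z]\in\Gamma(\wedge^2G\otimes K)$ to reduce to the three surviving terms evaluated by Lemma \ref{lem:Cartancalc}, and expand the right side via Lemma \ref{lem:Jider} together with the vanishing of $l_3$ on three elements of $\Gamma(G)$. The only remaining work, as you note, is the sign bookkeeping, which the paper likewise records only in its final three-term formula rather than deriving step by step.
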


\begin{proof}
Using repeatedly the derivation property of $\lambda_3$ (Remark \ref{rem:Kos3der}),
the fact that $[Z,Z]\in \Gamma(\wedge^2G\otimes K)$ (see Section \ref{section: bivec}), 
 and the fact that $\lambda_3$ is graded symmetric, we obtain
\begin{eqnarray}\label{eq:lambdaleib}
\lambda_3(\alpha_1\cdot\xi_1,\alpha_2\cdot\xi_2,\alpha_3\cdot\xi_3)&=&
(-1)^{|\alpha_1|}\alpha_1\alpha_2\lambda_3(\xi_1,\xi_2,\alpha_3)\xi_3\\
&&+(-1)^{|\alpha_2||\alpha_3|+|\alpha_1|+|\alpha_2|+|\alpha_3|}
\alpha_1\alpha_3\lambda_3(\xi_3,\xi_1,\alpha_2)\xi_2\nonumber\\
&&+(-1)^{|\alpha_1||\alpha_2|+|\alpha_1||\alpha_3|+|\alpha_3|}
\alpha_2\alpha_3\lambda_3(\xi_2,\xi_3,\alpha_1)\xi_1.\nonumber
\end{eqnarray}
Applying repeatedly Lemma \ref{lem:Jider} for $l_3$, using the fact that  $l_3$ applied to any three elements of $\Gamma(G)$ vanishes (by its very definition), and using three times Lemma \ref{lem:Cartancalc},
 we see that 
$l_3(\alpha_1 \sharp \xi_1,\alpha_2 \sharp \xi_2,\alpha_3 \sharp \xi_3)$ equals the image under $q$ of the right-hand side of Equation \eqref{eq:lambdaleib}. 
\end{proof}

\begin{proof}[Proof of Theorem \ref{theorem: characteristic foliation}]
Clearly the kernel of $q$ is $F^2(\Omega(M))$, and
 each $\lambda_i$ maps to zero in the quotient $\faktor{\Omega_\hor(M)}{F^2(\Omega(M))}$ whenever one of the entries lies in 
$F^2\Omega(M)$, by  Lemma \ref{lem:degrees}. Hence, to check that the map 
$q$ is a strict $L_{\infty}[1]$-algebra morphism,
it is sufficient to restrict ourselves to elements of $\Gamma(\wedge K^*\otimes G^*)$. The restriction of $q$ there 
\begin{itemize}
\item intertwines $\lambda_1$ and $l_1$  by Corollary \ref{rem:Bottchainmap} since    the identification $\nu_K\cong G$  (given by the projection along $K$) identifies $l_1$ and the Bott connection,
\item  intertwines $\lambda_2$ and $-l_2$ by Proposition \ref{prop:intertwo}, 
\item intertwines $\lambda_3$ and $l_3$ by Proposition \ref{prop:interthree}.
\end{itemize}
 \end{proof}

 \addtocontents{toc}{\protect\mbox{}\protect}
\section{\textsf{Obstructedness of deformations}}
\label{section: obstructions}

We display examples showing that both the deformations of pre-symplectic forms and of foliations are formally obstructed. A deformation problem governed by an $L_{\infty}[1]$-algebra $(W,\{\lambda_k\}_{k\ge 1})$ is called {\em formally obstructed}, if there is a class in the zero-th cohomology $H^0(W)$ of the cochain complex $(W,\lambda_1)$, such that one (or equivalently, any) representative $w\in W_0$ can not be extended to a formal curve of Maurer-Cartan elements.  
Obstructedness can be detected with the help of the Kuranishi map
\begin{equation}
\mathrm{Kr} \colon H^0(W)  \to H^1(W), \;\;\ [w]\mapsto [\lambda_2(w,w)],
\end{equation}
for it is well-known that if the Kuranishi map is not identically zero, then the deformation problem is formally (and hence also smoothly) obstructed, see for example \cite[Theorem 11.4]{OP}.

\subsection{Obstructedness of pre-symplectic deformations}
\label{subsec:obspre}

We consider  
  $M=(S^1)^{\times 4}$ with  pre-symplectic form $\eta:=d\theta_3 \wedge d\theta_4$ (so the kernel $K$ is spanned by $\frac{\partial}{\partial \theta_1}$, $\frac{\partial}{\partial \theta_2}$), and choosing
as complementary subbundle $G$ the span of
$\frac{\partial}{\partial \theta_3},\frac{\partial}{\partial \theta_4}$ 
(so the Poisson bivector field corresponding to $\eta$ reads
$\pi:=\frac{\partial}{\partial \theta_3}\wedge \frac{\partial}{\partial \theta_4}$).

In Section \ref{subsection: Koszul for pre-symplectic} we saw that the Koszul $L_{\infty}[1]$-algebra ($\Omega_\hor(M)[2],\lambda_1,\lambda_2,\lambda_3)$ governs the deformations of $\eta$ as a pre-symplectic form. Recall that the multibrackets $\lambda_i$ were defined in Proposition \ref{proposition: extended Koszul}, with $\lambda_1$ being the de Rham differential.
 
The Kuranishi map  
$\mathrm{Kr}\colon H^0\to H^1$ maps the class of any closed element $B\in \Omega^2_\hor(M)$ to the class of $[B,B]_{\pi}$. 
Now we choose $$B:=f(\theta_3)d\theta_1\wedge d\theta_3+g(\theta_4) d\theta_2\wedge d\theta_4$$ for $f,g$ smooth functions on $S^1=\RR/2\pi \ZZ$. Clearly $B$ is closed. One computes\footnote{To see this, notice that $[d\theta_i,d\theta_j]_{\pi}=0$. Further,  for $h\in \mathcal{C}^\infty(M)$, one has
{
\begin{eqnarray*}
[d\theta_i,h]_\pi&=&\Lie_\pi(d\theta_i h)=  [\iota_\pi,d](d\theta_i h)
= -\iota_\pi(d\theta_i \wedge dh) = -\iota_{\frac{\partial}{\partial \theta_3}}\iota_{\frac{\partial}{\partial \theta_4}}(d\theta_i dh)
= \delta_{i3} \frac{\partial h}{\partial \theta_4} - \delta_{i4} \frac{\partial h}{\partial \theta_3}.
\end{eqnarray*} } 
}
 that $$ [B,B]_{\pi}=2\big(f{\partial_4 g}d\theta_1\wedge d\theta_2\wedge d\theta_4+
g{\partial_3 f}d\theta_1\wedge d\theta_2\wedge d\theta_3),$$
where $\partial_i$ denotes the partial derivative w.r.t. $\theta_i$.
While this 3-form is exact, we now show that it does not admit a primitive in $\Omega^2_\hor(M)$. This will show that the class it represents in the cohomology of 
$\Omega_\hor(M)$ is non-zero, i.e. that $\mathrm{Kr}([B])\neq 0$.

For any $\alpha \in \Omega^2_\hor(M)$, we compute the   integral of $d\alpha$
over $C_{[a,b]}:={S^1\times S^1\times [a,b] \times \{0\}}$ for all values of $a,b\in S^1$, where $[a,b]$ denotes the positively oriented arc joining these two points:
$\int_{C_{[a,b]}}d\alpha=
\int_{\partial C_{[a,b]}}\alpha=0$,
using Stokes' theorem and the fact that $\alpha$ has no $d\theta_1\wedge d\theta_2$ component.
On the other hand, $$ \int_{C_{[a,b]}}[B,B]_{\pi}= 2(2\pi)^2g(0)[f(b)-f(a)],$$ which is non-vanishing for instance for $f=g=\cos$. Hence $[B,B]_{\pi}$ can not be equal to $d\alpha$ for any $\alpha \in \Omega^2_\hor(M)$.

\subsection{Obstructedness of deformations of foliations}
As above take the manifold $M=(S^1)^{\times 4}$, take $K$ the involutive distribution  spanned by $\frac{\partial}{\partial \theta_1}$, $\frac{\partial}{\partial \theta_2}$, and choose
as complementary subbundle $G$  the span of
$\frac{\partial}{\partial \theta_3},\frac{\partial}{\partial \theta_4}$. 
As seen in Section \ref{subsection: deformations of foliations}, the deformations of the involutive distribution $K$ (i.e. of the underlying foliation) are goverened by the 
 $L_{\infty}[1]$-algebra 
 $(\Gamma(\wedge K^*\otimes G)[1],l_1,-l_2,l_3)$. 
 
 The Kuranishi map  
$\mathrm{Kr}\colon H^0\to H^1$ maps the class of  any $l_1$-closed $\Phi\in \Gamma(K^*\otimes G)$ to the class of $-l_2(\Phi,\Phi)$.  
We now take $$\Phi=d\theta_1\otimes f(\theta_3)\frac{\partial}{\partial \theta_4}-
d\theta_2  \otimes g(\theta_4)\frac{\partial}{\partial \theta_3},$$
which is $l_1$-closed by Equation \eqref{eq:lixi} below.
We compute $l_2(\Phi,\Phi)\in \Gamma(\wedge^2 K^*\otimes G)$ by evaluating it on the frame $\frac{\partial}{\partial \theta_1}, \frac{\partial}{\partial \theta_2}$ of $K$ and obtain 
$$l_2(\Phi,\Phi)= (d\theta_1\wedge d\theta_2) \otimes 2\big(f{\partial_4 g}\frac{\partial}{\partial \theta_3}- g {\partial_3 f}\frac{\partial}{\partial \theta_4}\big).$$
On the other hand,  any $\xi\in \Gamma(K^*\otimes G)$ can be written as $\sum_{i=1}^2\sum_{j=3}^4 d\theta_i\otimes h_{ij}
\frac{\partial}{\partial \theta_j}$ for functions $h_{ij}$ on $M=(S^1)^{\times 4}$, and one computes
\begin{equation}\label{eq:lixi}
l_1(\xi)= (d\theta_1\wedge d\theta_2) \otimes\big[({\partial_1}h_{23}- {\partial_2}h_{13})\frac{\partial}{\partial \theta_3}+
({\partial_1}h_{24}- {\partial_2}h_{14})\frac{\partial}{\partial \theta_4}\big].
\end{equation}
Notice that for every $a,c\in S^1$, the pullback of   $l_1(\xi)$
to $N_{a,c}:=S^1\times S^1\times \{a\}\times \{c\}$ is exact\footnote{A primitive is $(h_{13}d\theta_1+h_{23}d\theta_2)\otimes \frac{\partial}{\partial \theta_3}+ 
(h_{14}d\theta_1+h_{24}d\theta_2)\otimes \frac{\partial}{\partial \theta_4}$.}, hence
$\int_{N_{a,c}}l_1(\xi)=0$ for all $a,c$.
But the pullback of $l_2(\Phi,\Phi)$ is a constant 2-form, since $f$ and $g$ do not depend on $\theta_1,\theta_2$, hence $\int_{N_{a,c}}l_2(\Phi,\Phi)$ 
is a (vector-valued) constant. This constant 
 is non-zero for instance for $f=g=\cos$ and for $a,c\notin \frac{\pi}{2}\ZZ$. It follows that  $l_2(\Phi,\Phi)$ is not equal to 
$l_1(\xi)$ for any $\xi\in \Gamma(K^*\otimes G)$. In other words, it follows that  $\mathrm{Kr}([\Phi])\neq 0$.
\begin{remark}
$\Phi$ is the image of $B$ under the strict $L_{\infty}[1]$-morphism $q$ of Theorem \ref{theorem: characteristic foliation}. Hence the induced map in cohomology maps $\mathrm{Kr}([B])$ to $\mathrm{Kr}([\Phi])$. This and the fact that $\mathrm{Kr}([\Phi])\neq 0$ implies that 
the result we obtained in Subsection \ref{subsec:obspre}, namely that $\mathrm{Kr}([B])\neq 0$.
\end{remark}

\appendix
\addtocontents{toc}{\protect\setcounter{tocdepth}{1}}

 \addtocontents{toc}{\protect\mbox{}\protect}
\section{Cartan calculus of multivector fields}
\label{sec:Cartan}

We recall the Cartan calculus on manifolds. Let $M$ be a manifold and $Y$ a multivector field on $M$ of degree $k$. Associated with $Y$, we have the following operators on $\Omega(M)$:
\begin{itemize}
\item {\em Contraction}: $\iota_Y: \Omega^\bullet(M)\to \Omega^{\bullet-k}(M)$, which for $Y=f$ a function is ordinary multiplication, for a vector field $X$ is defined by
$$(\iota_X \omega)(X_1,\cdots, X_{r-1}):=\omega(X,X_1,\cdots,X_{r-1}),$$
and then extended to all multivector fields by the rule
$$\iota_{X\wedge \tilde{X}} = \iota_X \circ \iota_{\tilde{X}}.$$
\item {\em Lie derivative}: $\Lie_Y: \Omega^\bullet(M) \to \Omega^{\bullet - k +1}(M)$, which is defined as the graded commutator $[\iota_Y,d]=\iota_Y\circ d - (-1)^{|Y|}d\circ \iota_Y$, with $d$ the de~Rham differential.
\end{itemize}

These operations obey the following commutator rules:
\begin{enumerate}
\item $[\Lie_Y,d]=0$ and $[\iota_Y,\iota_{\tilde{Y}}]=0$,
\item $[\Lie_Y,\iota_{\tilde{Y}}]=\iota_{[Y,\tilde{Y}]}$, for $[\cdot,\cdot]$ the Schouten-Nijenhuis bracket of multivector fields, and
\item $[\Lie_Y,\Lie_{\tilde{Y}}]=\Lie_{[Y,\tilde{Y}]}$.
\end{enumerate}

\section{Reminder on $L_\infty$-algebras, higher derived brackets, and Koszul brackets}
\label{appendix: reminder L-infty}

\subsection{$L_\infty$- and $L_\infty[1]$-algebras}

We briefly review the basic background about $L_\infty$- and $L_\infty[1]$-algebras.
Let $V$ be a graded vector space.

\begin{itemize}
	\item For every $r\in \mathbb{Z}$, we have the degree shift endofunctor $[r]$, which maps a graded vector space $V$
	to $V[r]$, whose component $(V[r])^i$ in degree $i\in \mathbb{Z}$ is $V^{i+r}$.
	\item We denote by $S_n$ the symmetric group on $n$ letters. Given an integer $n\ge1$ and an ordered partition $i_1+\cdots+i_k=n$, we denote by $S_{i_1,\ldots,i_k}\subset S_n$ the set of \emph{$(i_1,\ldots,i_k)$-unshuffles}, i.e., permutations $\sigma\in S_n$ such that $\sigma(i)<\sigma(i+1)$ for $i\neq i_1, i_1+i_2,\ldots, i_1+\cdots+ i_{k-1}$.
	\item $\overline{T}(V)=\bigoplus_{n\ge 1}T^n(V)$ is the $n$-fold tensor product of $V$ with itself. The symmetric group $S_n$ acts on $T^n(V)$ by $\sigma(x_1\otimes\cdots\otimes x_n)=\varepsilon(\sigma)x_{\sigma(1)}\otimes\cdots\otimes x_{\sigma(n)}$, where $\varepsilon(\sigma)=\varepsilon(\sigma;x_1,\ldots,x_n)$ is the usual \emph{Koszul sign}. 
	We denote the space of coinvariants by $\bigodot^nV$, and by $x_1\odot\cdots\odot x_n$ the image of $x_1\otimes\cdots\otimes x_n$ under the natural projection $T^n(V)\to \bigodot^n V$. 
	\item The reduced symmetric coalgebra over $V$ is the space $\bigodot V =\bigoplus_{n\ge 1}\bigodot^n V$, with the unshuffle coproduct
	$$\overline{\Delta}(x_1\odot \cdots \odot x_n) = \sum_{i=1}^{n-1}\sum_{\sigma\in S_{i,n-i}}\varepsilon(\sigma)(x_{\sigma(1)}\odot \cdots \odot x_{\sigma(i)})\otimes (x_{\sigma(i+1)}\odot \cdots \odot x_{\sigma(n)}).$$
	This is the cofree, coassociative, cocommutative and locally conilpotent graded coalgebra over $V$.
	\item 
	Let $(C,\Delta)$ be a graded coalgebra.
	A map $Q: (C,\Delta)\to (C,\Delta)$ of degree $1$ is a codifferential if $Q\circ Q = 0$ and $\Delta \circ Q =(Q\otimes \mathrm{id} + \mathrm{id}\otimes Q)\circ \Delta$ hold true.
	\item An $L_\infty[1]$-algebra structure on $V$ is a codifferential $Q$
	of the graded coalgebra $(\bigodot V,\overline{\Delta})$.
	\item A morphism of $L_\infty[1]$-algebras from $L_\infty[1]$-algebra $V$ to $L_\infty[1]$-algebra $W$ is a morphism of the corresponding
	dg coalgebras $F: (\bigodot V,\overline{\Delta},Q_V) \to (\bigodot W,\overline{\Delta},Q_W)$.
	\item An $L_\infty[1]$-algebra structure $Q$ on $V$ is determined by its Taylor coefficients $(Q_n)_{n\ge 1}$, which are the maps given by
	$$ 
	\xymatrix{
		\bigodot^n V \ar[r] & \bigodot V \ar[r]^Q & (\bigodot V)[1] \ar[r]^(0.6){p}& V[1].
	}$$
	Moreover, a morphism $F$ of $L_\infty[1]$-algebras from $V$ to $W$
	is determined by its Taylor coefficients
	$F_n: \bigodot^n V \to W$, which are defined in the same manner as
	the Taylor coefficients of an $L_\infty[1]$-algebra structure.
	\item A morphism of $L_\infty[1]$-algebras is called an isomorphism if the corresponding morphism of dg coalgebras is invertible.	It is called strict if all its structure maps except for the first one vanish.
	\item A graded subspace $W$ of an $L_\infty[1]$-algebra $V$ is an $L_\infty[1]$-subalgebra if the corresponding structure map $Q_n$ sends $\odot^n W$ to $W$.
	Similarly, $W$ is an $L_\infty[1]$-ideal if $Q_n$ maps
	$W\odot \odot^{n-1}V$ to $W$.
	\item The category of dg Lie algebras embeds into the category of $L_\infty[1]$-algebra via
	$$ (L,d,[\cdot,\cdot]) \mapsto (\bigodot(L[1]),Q),$$
	where $Q$ is the coderivation whose non-trivial Taylor coefficients are
	$Q_1( a[1]) = - (da)[1]$ and $Q_2( a[1]\otimes  b[1])=(-1)^{|a|}([a, b])[1]$.
	\item Finally, we define the structure of an $L_\infty$-algebra  on $V$ to be an $L_\infty[1]$-algebra structure on $V[1]$.
\end{itemize}

The main example of an $L_\infty$-algebra (respectively $L_\infty[1]$-algebra) in this paper is the {\em Koszul $L_\infty$-algebra ($L_\infty[1]$-algebra)} associated to a pre-symplectic manifold, cf.~Section \ref{subsection: Koszul for pre-symplectic}.

\subsection{Higher derived brackets}

The formalism of higher derived brackets from \cite{Voronov1,Voronov2} is a mechanism to construct $L_\infty$-algebras from certain input data.
The input data are a graded Lie algebra $(V,[\cdot,\cdot])$,
together with
\begin{itemize}
\item a splitting as a graded vector space $V=\mathfrak{a}\oplus \mathfrak{h}$, where $\mathfrak{a}$ is an abelian subalgebra and $\mathfrak{h}$ is a Lie subalgebra,
\item a Maurer-Cartan element $X \in V$, i.e.~an element of degree $1$ such that $[X,X]=0$ holds.
\end{itemize}
There is a compatibility condition, which requires that 
$X$ lies in $\mathfrak{h}$.

The higher derived brackets associated to these data are the maps defined by
$$ Q_n: \odot^n\mathfrak{a} \to \mathfrak{a}[1], \quad a_1\odot \cdots \odot a_n \mapsto \mathrm{pr}_{\mathfrak{a}}\Big([\cdots [X,a_1],\cdots,a_n]\Big),$$
where $\mathrm{pr}_{\mathfrak{a}}$ denotes the projection from $V$ to $\mathfrak{a}$ along $\mathfrak{h}$.

The following result is proven in \cite{Voronov1}:

\begin{theorem}\label{theorem: higher derived brackets}
The maps $(Q_n)_{n\ge 1}$ equip $\mathfrak{a}$ with the structure of an $L_\infty[1]$-algebra.
\end{theorem}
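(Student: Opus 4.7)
The plan is to verify the two $L_\infty[1]$ axioms --- graded symmetry of each $Q_n$ on $\bigodot^n \mathfrak{a}$, and the quadratic $L_\infty[1]$ relations --- directly from two inputs: the graded Jacobi identity of $(V,[\cdot,\cdot])$, and the single equation $[X,X]=0$. The splitting $V=\mathfrak{a}\oplus\mathfrak{h}$ will be exploited via the decomposition $\mathrm{id}_V = P + \Pi$ where $P := \mathrm{pr}_{\mathfrak{a}}$ and $\Pi := \mathrm{pr}_{\mathfrak{h}}$, together with the structural facts $[\mathfrak{a},\mathfrak{a}]=0$ and $X\in\mathfrak{h}$.

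First I would establish graded symmetry. Given $a_1,\ldots,a_n\in\mathfrak{a}$, I would apply the graded Jacobi identity in the form
$$[[b,a_i],a_{i+1}] = [b,[a_i,a_{i+1}]] + (-1)^{|a_i||a_{i+1}|}[[b,a_{i+1}],a_i]$$
to the inner element $b=[\cdots[X,a_1],\ldots,a_{i-1}]$. The first term on the right vanishes because $[a_i,a_{i+1}]=0$, so swapping adjacent entries in the iterated bracket $[\cdots[X,a_1],\ldots,a_n]$ produces exactly the Koszul sign required for a well-defined multilinear map on the symmetric product $\bigodot^n\mathfrak{a}$.

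Next I would attack the quadratic relations. I would expand the $n$-th $L_\infty[1]$ identity
$$\sum_{k=1}^{n}\sum_{\sigma\in S_{k,n-k}}\varepsilon(\sigma)\, Q_{n-k+1}\bigl(Q_k(a_{\sigma(1)},\ldots,a_{\sigma(k)}),a_{\sigma(k+1)},\ldots,a_{\sigma(n)}\bigr) = 0,$$
substituting the definition of each $Q_m$ to obtain iterated brackets with a single internal projection $P$. I would then write $P=\mathrm{id}_V - \Pi$ to split each summand into a ``bare'' part (with $P$ replaced by $\mathrm{id}_V$) and a ``correction'' part (carrying a factor $-\Pi$). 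The bare parts, collected across all unshuffles and reorganized with the graded Jacobi identity, should assemble into $P$ applied to iterated brackets of $\tfrac{1}{2}[X,X]$ against $a_1,\ldots,a_n$, so that contribution vanishes by hypothesis. The correction parts must cancel in pairs: each term in which $\Pi$ is inserted immediately after the iterated bracket has absorbed some subset of the inputs is matched, with opposite sign, by a summand in which that same subset appears instead as the output of an inner $Q_k$.

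The main obstacle will be tracking signs through the cancellation of correction terms. The cleanest route, which I would pursue, is coalgebraic: encode $(Q_n)_{n\ge 1}$ as a coderivation $\widehat{Q}$ of $\bigodot\mathfrak{a}$, extend $D:=[X,-]\colon V\to V$ to a coderivation $\widehat{D}$ of $\bigodot V$, and exhibit $\widehat{Q}$ as the transport of $\widehat{D}$ along the inclusion-projection pair $\bigodot\mathfrak{a}\hookrightarrow\bigodot V\twoheadrightarrow\bigodot\mathfrak{a}$. The abelianness of $\mathfrak{a}$ is precisely what makes the two symmetric-coalgebra extensions compatible, and in this formulation $\widehat{Q}\circ\widehat{Q}=0$ becomes a formal consequence of $D\circ D=0$, which is equivalent to $[X,X]=0$.
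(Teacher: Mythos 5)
The paper itself offers no proof of this statement: it is Voronov's higher-derived-brackets theorem, and the text simply refers to \cite{Voronov1}. Your first step is correct and is the standard argument for graded symmetry of the $Q_n$: the Jacobi identity converts a swap of adjacent arguments into the Koszul sign plus a term containing $[a_i,a_{i+1}]$, which vanishes because $\mathfrak{a}$ is abelian.

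The argument for the quadratic relations, however, has a genuine gap, and the clearest symptom is that nowhere in it do you use the hypothesis that $\mathfrak{h}=\ker(\mathrm{pr}_{\mathfrak{a}})$ is a Lie \emph{subalgebra}; the theorem is false without that hypothesis, so any argument that never invokes it cannot be complete. Concretely, two steps do not go through as described. First, after writing the internal $P:=\mathrm{pr}_{\mathfrak{a}}$ as $\mathrm{id}_V-\Pi$, your ``bare'' summands have the form $P[\cdots[[X,B_A],a_{j_1}],\cdots]$ with $B_A=[\cdots[X,a_{\sigma(1)}],\cdots,a_{\sigma(k)}]$ an \emph{unprojected} element of $V$; since $[B_A,a_j]\neq 0$ in general, these do not reassemble via the Leibniz rule into $P[\cdots[\tfrac12[X,X],a_1],\cdots,a_n]$ --- that reassembly works only when the inner output lies in $\mathfrak{a}$ and hence commutes with the remaining arguments, which is exactly the projected case you discarded. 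Second, the correction terms $-P[\cdots[[X,\Pi B_A],\cdots]\cdots]$ do not cancel in pairs. The mechanism that actually works (Voronov's) is different: one expands $P[\cdots[[\tfrac12[X,X],a_1],\cdots,a_n]$ by Leibniz into terms $P[B_A,B_B]$, inserts $\mathrm{id}=P+\Pi$ in front of \emph{both} factors, kills the $\Pi$--$\Pi$ terms using $X\in\mathfrak{h}$ together with $P[\mathfrak{h},\mathfrak{h}]=0$, uses $[\mathfrak{a},\mathfrak{a}]=0$ to move the projected factor to the innermost slot, and reabsorbs the mixed terms into Jacobiator summands; the outcome is the identity ``$n$-th Jacobiator $=$ $n$-th derived bracket of $\tfrac12[X,X]$''. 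Finally, your proposed coalgebraic shortcut does not repair this: the coderivation $\widehat{D}$ of $\bigodot V$ extending $D=[X,-]$ has only a linear Taylor coefficient, so the transport $p\circ\widehat{D}\circ\iota$ to $\bigodot\mathfrak{a}$ reproduces only $Q_1$ and none of the higher brackets; and even for a richer choice of $\widehat{D}$, the identity $(p\circ\widehat{D}\circ\iota)^2=0$ is not a formal consequence of $\widehat{D}^2=0$ --- controlling the failure of $\iota\circ p$ to commute with $\widehat{D}$ is precisely where the hypotheses on $\mathfrak{a}$, $\mathfrak{h}$ and $X$ must enter, and that analysis is missing.
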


\subsection{$BV_\infty$-algebras and Koszul brackets}
\label{appendix: BV}

We collect some useful facts from the literature about commutative $BV_\infty$-algebras and Koszul brackets. We follow mostly
\cite{Koszul,Kravchenko} and the exposition in \cite[Section 4.2.1]{Bandiera_PhD-thesis}.
We refer the reader for details and proofs to these sources.

First recall the notion of differential operators on a graded commutative algebra $A$: One defines recursively the set $\Diffop_k(A) \subset \mathrm{End}(A)$ of differential operators
of order $\le k$ on $A$ by
$$ \Diffop_{-1}(A)=\{0\}, \quad \textrm{and} \quad \Diffop_k(A) =\{f\in \Hom(A,A) \, \vert \, [f,\ell_a]\subset \Diffop_{k-1}(A) \, \forall a \in A\},$$
where $\ell_a$ denotes left multiplication, i.e.~$\ell_a(b):=ab$.

Following \cite{Kravchenko} we define

\begin{definition}\label{definition: comm BV-infty}
A \emph{commutative $BV_\infty$-algebra} $(A,d=\Delta_0,\Delta_1,\dots)$
of (odd) degree $r$ is a commutative, unital dg algebra $(A,d,\cdot)$,
equipped with a family of endomorphisms $(\Delta_i)_{i\ge 0}$
of degree $1-i(r+1)$ such  that:
\begin{enumerate}
\item for all $i\ge 1$, the endomorphism $\Delta_i$ is a differential operator of order $\le i+1$,
which annihilates the unit $1_A$,
\item if we adjoin a central variable $t$ of degree $r+1$, the operator
$$ \Delta:= \Delta_0 + t \Delta_1 + t^2 \Delta_2+\cdots: A[[t]]\to A[[t]]$$
squares to zero.
\end{enumerate}
We refer to $\Delta$ as the BV-operator.
\end{definition}

It is well-known that to every commutative $BV_\infty$-algebra of degree $r$,
one can associate an $L_\infty[1]$-algebra structure on $A[r+1]$, see 
\cite{Voronov1,Bandiera_PhD-thesis}. 
The Taylor coefficients of this $L_\infty[1]$-algebra structure are given by
the Koszul brackets associated to the operators $\Delta_i$:
One associates to the endomorphism $\Delta_i$ a sequence
of operations $\Koszul(\Delta_i)_n: \odot^n A\to A$ defined iteratively by
$\Koszul(\Delta_i)_1=\Delta_i$ and
\begin{eqnarray*}
\Koszul(\Delta_i)_n(a_1\odot \cdots \odot a_n) &=& +\Koszul(\Delta_i)_{n-1}(a_1\odot \cdots \odot a_{n-2}\odot a_{n-1}a_n)\\
&& - \Koszul(\Delta_i)_{n-1}(a_1\odot \cdots \odot a_{n-1})a_n\\
&&-(-1)^{|a_{n-1}||a_n|}\Koszul(\Delta_i)_{n-1}(a_1\odot \cdots \odot a_{n-1}\odot a_n)a_{n-1},
\end{eqnarray*}

{ The following result was noticed in \cite{Voronov1}, we follow the exposition from \cite[Proposition 4.2.21]{Bandiera_PhD-thesis}:}

\begin{proposition}\label{proposition: Koszul brackets}
Given a commutative $BV_\infty$-algebra $(A,d,=\Delta_0,\Delta_1,\dots,)$ of degree $r$, the sequence of maps
$$(\Delta_0,\Koszul(\Delta_1)_2,\Koszul(\Delta_2)_3,\dots)$$
equips $A[r+1]$ with the structure of an $L_\infty[1]$-algebra.
\end{proposition}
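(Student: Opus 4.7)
The plan is to apply the Koszul morphism $\Koszul\colon \mathrm{End}_*(A)\to \Coder(\bigodot A)$ of \eqref{eq:cK}, which is a morphism of graded Lie algebras, to the full BV-operator $\Delta = \sum_{i\ge 0} t^i \Delta_i$ acting on $A[[t]]$. Extending $\Koszul$ in a $k[[t]]$-linear way, I would view $\Koszul(\Delta) = \sum_{i\ge 0} t^i \Koszul(\Delta_i)$ as a degree-one coderivation of the coalgebra $\bigodot A[[t]]$.

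Since $\Koszul$ is a Lie algebra morphism and $[\Delta,\Delta] = 2\Delta^2 = 0$ by hypothesis, we obtain $[\Koszul(\Delta),\Koszul(\Delta)] = \Koszul([\Delta,\Delta]) = 0$, so that $\Koszul(\Delta)$ is a codifferential. The crucial technical input is the standard fact that a differential operator $D$ of order $\le k$ annihilating $1_A$ satisfies $\Koszul(D)_n = 0$ for every $n > k$ (this follows inductively from the defining recursion together with the characterization of differential operators via iterated commutators with left multiplications). Applied to $\Delta_i$, which is of order $\le i+1$, this forces $\Koszul(\Delta_i)_n = 0$ whenever $n > i+1$, so that the arity-$n$ Taylor coefficient of $\Koszul(\Delta)$ takes the form $\sum_{i\ge n-1} t^i\, \Koszul(\Delta_i)_n$. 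In particular, the proposed structure map $\mu_n := \Koszul(\Delta_{n-1})_n$ appears as the $t^{n-1}$-coefficient at arity $n$, and a direct degree count -- using that $t$ has degree $r+1$ and that $\Delta_i$ has degree $1-i(r+1)$ -- shows that $\mu_n$ has degree $1$ as a map $\odot^n A[r+1]\to A[r+1]$, as required for an $L_\infty[1]$-structure map.

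To conclude, I would extract the coefficient of $t^{n-1}$ in the arity-$n$ piece of the codifferential identity $\Koszul(\Delta)^2 = 0$. Each term of this identity is a composition of the form $\Koszul(\Delta_a)_j$ applied after $\Koszul(\Delta_b)_i$ inserted into one slot (summed over the relevant unshuffles with Koszul signs) with $i+j = n+1$, and carries a factor $t^{a+b}$. The order constraint forces $a \ge j-1$ and $b \ge i-1$, hence $a+b \ge n-1$, with equality if and only if $a = j-1$ and $b = i-1$; thus the $t^{n-1}$-coefficient isolates precisely the compositions of the top Koszul brackets $\mu_j$ and $\mu_i$ with $i+j = n+1$, which is exactly the arity-$n$ $L_\infty[1]$-relation for $(\mu_n)$. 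The main technical hurdle is sign bookkeeping: one must track the Koszul signs arising from the unshuffles and combine them with the degree shifts $[r+1]$ to confirm that the relations extracted from $\Koszul(\Delta)^2 = 0$ come with precisely the signs demanded by the $L_\infty[1]$-axioms, rather than these axioms with an overall sign change.
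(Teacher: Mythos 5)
The paper does not actually supply a proof of Proposition \ref{proposition: Koszul brackets} --- it defers to Voronov and to Bandiera's thesis --- so there is no in-paper argument to compare against; what you wrote is, in essence, the standard proof from those sources, and it is correct. It also uses precisely the toolkit the paper itself sets up: the morphism of graded Lie algebras $\Koszul\colon \mathrm{End}_*(A)\to\Coder(\odot A)$ of \eqref{eq:cK} (extended $t$-linearly, so that $[\Delta,\Delta]=0$ gives $\Koszul(\Delta)^2=0$), and the vanishing $\Koszul(D)_n=0$ for $n>k$ when $D\in\Diffop_k(A)$ annihilates $1_A$ --- the latter follows, as you indicate, by induction, since $[D,\ell_a]-\ell_{Da}$ is again an operator of order $\le k-1$ annihilating $1_A$ and the recursion reduces $\Koszul(D)_{n+1}$ to the $n$-th Koszul bracket of such operators. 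Your coefficient extraction is the right bookkeeping: in the arity-$n$ component of $\Koszul(\Delta)^2=0$ the order bounds force $a+b\ge n-1$ with equality exactly for $(a,b)=(j-1,i-1)$, so the $t^{n-1}$-coefficient is verbatim the quadratic $L_\infty[1]$-relation among the $\mu_m=\Koszul(\Delta_{m-1})_m$. The only caveat you leave open --- the signs coming from the shift $[r+1]$ --- is in fact vacuous: by Definition \ref{definition: comm BV-infty} the degree $r$ is odd, so $r+1$ is even, the suspension $A\to A[r+1]$ changes no parities and $\odot^n(A[r+1])\cong(\odot^n A)[n(r+1)]$ carries no sign twist; hence the relations extracted over $A$ transfer to $A[r+1]$ without modification, and the only thing to verify is the degree count $1-(n-1)(r+1)\mapsto 1$, which you do. I would only suggest making the evenness of $r+1$ explicit, since that is what collapses the ``main technical hurdle'' you mention into a non-issue.
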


\bibliographystyle{habbrv} 
\thebibliography{10}

\bibitem{Bandiera_PhD-thesis}
R.~Bandiera.
\newblock {Higher Deligne groupoids, derived brackets and deformation problems
  in holomorphic Poisson geometry}.
\newblock {PhD-thesis at Universit\`a di Roma La Sapienza}, 2014.

\bibitem{Bandiera-Manetti}
R.~Bandiera and M.~Manetti.
\newblock On coisotropic deformations of holomorphic submanifolds.
\newblock {\em J. Math. Sci. Univ. Tokyo}, 22(1):1--37, 2015.

\bibitem{DSV}
V.~Dotsenko, S.~Shadrin, and B.~Vallette.
\newblock De {R}ham cohomology and homotopy {F}robenius manifolds.
\newblock {\em J. Eur. Math. Soc. (JEMS)}, 17(3):535--547, 2015.

\bibitem{Fiorenza-Manetti}
D.~Fiorenza and M.~Manetti.
\newblock Formality of {K}oszul brackets and deformations of holomorphic
  {P}oisson manifolds.
\newblock {\em Homology Homotopy Appl.}, 14(2):63--75, 2012.

\bibitem{FZgeo}
Y.~Fr{{\'e}}gier and M.~Zambon.
\newblock Simultaneous deformations and {P}oisson geometry.
\newblock {\em Compos. Math.}, 151(9):1763--1790, 2015.

\bibitem{GMS}
M.~Gualtieri, M.~Matviichuk, and G.~Scott.
\newblock {Deformation of Dirac structures via $L_{\infty}$ algebras}.
\newblock 02 2017, \texttt{Arxiv:1702.08837}.

\bibitem{Huebsch}
J.~Huebschmann.
\newblock Higher homotopies and {M}aurer-{C}artan algebras:
  quasi-{L}ie-{R}inehart, {G}erstenhaber, and {B}atalin-{V}ilkovisky algebras.
\newblock In {\em The breadth of symplectic and {P}oisson geometry}, volume 232
  of {\em Progr. Math.}, pages 237--302. Birkh{\"a}user Boston, Boston, MA,
  2005.

\bibitem{Ji}
X.~Ji.
\newblock Simultaneous deformations of a {L}ie algebroid and its {L}ie
  subalgebroid.
\newblock {\em J. Geom. Phys.}, 84:8--29, 2014.

\bibitem{Koszul}
J.-L. Koszul.
\newblock Crochet de {S}chouten-{N}ijenhuis et cohomologie.
\newblock {\em Ast{\'e}risque}, (Num{\'e}ro Hors S{\'e}rie):257--271, 1985.
\newblock The mathematical heritage of {\'E}lie Cartan (Lyon, 1984).

\bibitem{Kravchenko}
O.~Kravchenko.
\newblock Deformations of {B}atalin-{V}ilkovisky algebras.
\newblock In {\em Poisson geometry ({W}arsaw, 1998)}, volume~51 of {\em Banach
  Center Publ.}, pages 131--139. Polish Acad. Sci. Inst. Math., Warsaw, 2000.

\bibitem{LOTVcoiso}
H.~V. L{\^e}, Y.-G. Oh, A.~G. Tortorella, and L.~Vitagliano.
\newblock {Deformations of Coisotropic Submanifolds in Abstract Jacobi
  Manifolds}.
\newblock {\em {J. Symplectic Geom.}}, 16(4), 2018, \texttt{ArXiv:1410.8446}.

\bibitem{OP}
Y.-G. Oh and J.-S. Park.
\newblock Deformations of coisotropic submanifolds and strong homotopy {L}ie
  algebroids.
\newblock {\em Invent. Math.}, 161(2):287--360, 2005.

\bibitem{Ruan}
W.-D. Ruan.
\newblock Deformation of integral coisotropic submanifolds in symplectic
  manifolds.
\newblock {\em J. Symplectic Geom.}, 3(2):161--169, 2005.

\bibitem{SZDirac}
F.~Sch\"atz and M.~Zambon.
\newblock {Deformations of pre-symplectic structures via Dirac geometry}.
\newblock {Preprint \texttt{ArXiv:1807.10148}}.

\bibitem{SZcoisopre}
F.~Sch\"atz and M.~Zambon.
\newblock {From coisotropic submanifolds to pre-symplectic structures: relating
  the deformation theories}.
\newblock {In preparation}.

\bibitem{SZpreequi}
F.~Sch\"atz and M.~Zambon.
\newblock {Gauge equivalences for foliations and pre-symplectic structures}.
\newblock {Preprint}.

\bibitem{Schaetz-Zambon}
F.~Sch{{\"a}}tz and M.~Zambon.
\newblock Deformations of coisotropic submanifolds for fibrewise entire
  {P}oisson structures.
\newblock {\em Lett. Math. Phys.}, 103(7):777--791, 2013.

\bibitem{Schaetz-Zambon2}
F.~Sch{\"a}tz and M.~Zambon.
\newblock Equivalences of coisotropic submanifolds.
\newblock {\em J. Symplectic Geom.}, 15(1):107--149, 2017.

\bibitem{VitaglianoFol}
L.~Vitagliano.
\newblock On the strong homotopy {L}ie-{R}inehart algebra of a foliation.
\newblock {\em Commun. Contemp. Math.}, 16(6):{1450007, 49 pages}, 2014.

\bibitem{Voronov1}
T.~Voronov.
\newblock Higher derived brackets and homotopy algebras.
\newblock {\em J. Pure Appl. Algebra}, 202(1-3):133--153, 2005.

\bibitem{Voronov2}
T.~Voronov.
\newblock Higher derived brackets for arbitrary derivations.
\newblock In {\em Travaux math\'ematiques. {F}asc. {XVI}}, Trav. Math., XVI,
  pages 163--186. Univ. Luxemb., Luxembourg, 2005.

\end{document}